\font\tengoth=eufm10 at 10pt
\font\sevengoth=eufm7 at 6pt
\newcommand{\mlabel}[1]{\marginpar{#1}\label{#1}}
\newcommand{\fS}{{\mathfrak S}}
\renewcommand{\:}{\colon}
\newcommand{\1}{\mathbf{1}}
\newcommand{\cD}{\mathcal{D}}
\newcommand{\cE}{\mathcal{E}}
\newcommand{\cF}{\mathcal{F}}
\newcommand{\cH}{\mathcal{H}}
\newcommand{\cK}{\mathcal{K}}
\newcommand{\cL}{\mathcal{L}}
\newcommand{\cM}{\mathcal{M}}
\newcommand{\subeq}{\subseteq}
\newcommand{\supeq}{\supseteq}
\newcommand{\into}{\hookrightarrow}
\newcommand{\eps}{\varepsilon}
\newcommand{\N}{{\mathbb N}}
\newcommand{\Z}{{\mathbb Z}}
\newcommand{\R}{{\mathbb R}}
\newcommand{\C}{{\mathbb C}}
\newcommand{\T}{{\mathbb T}}
\newcommand{\bS}{{\mathbb S}}
\newcommand{\bT}{{\mathbb T}}
\renewcommand{\hat}{\widehat}
\newcommand{\GL}{\mathop{{\rm GL}}\nolimits}
\newcommand{\U}{\mathop{\rm U{}}\nolimits}
\renewcommand{\Im}{\mathop{{\rm Im}}\nolimits}
\newcommand{\tr}{\mathop{{\rm tr}}\nolimits}
\newcommand{\dist}{\mathop{{\rm dist}}\nolimits}
\newcommand{\Aut}{\mathop{{\rm Aut}}\nolimits}
\newcommand{\id}{\mathop{{\rm id}}\nolimits}
\newcommand{\Spann}{\mathop{{\rm span}}\nolimits}
\newcommand{\ev}{\mathop{{\rm ev}}\nolimits}
\newcommand{\Rarrow}{\Rightarrow}
\newcommand{\nin}{\noindent} 
\newcommand{\oline}{\overline}
\newcommand{\la}{\langle}
\newcommand{\ra}{\rangle}
\newcommand{\res}{\vert}
\newcommand{\ssssarr}{\hbox to 15pt{\rightarrowfill}}
\newcommand{\sssarr}{\hbox to 20pt{\rightarrowfill}}
\newcommand{\ssarr}{\hbox to 30pt{\rightarrowfill}}
\newcommand{\sarr}{\hbox to 40pt{\rightarrowfill}}
\newcommand{\arr}{\hbox to 60pt{\rightarrowfill}}
\newcommand{\larr}{\hbox to 60pt{\leftarrowfill}}
\newcommand{\Arr}{\hbox to 80pt{\rightarrowfill}}
\def\theoremname{Theorem}
\def\propositionname{Proposition}
\def\corollaryname{Corollary}
\def\lemmaname{Lemma}
\def\remarkname{Remark}
\def\conjecturename{Conjecture} 
\def\definitionname{Definition}
\def\exercisename{Exercise}
\def\examplename{Example}
\def\examplesname{Examples}
\def\problemname{Problem}
\def\problemsname{Problems}
\def\satzname{Satz} 
\def\koroname{Korollar}
\def\folgname{Folgerung}
\def\bemerkname{Bemerkung}
\def\aufgname{Aufgabe}
\def\beisname{Beispiel}
\def\beissname{Beispiele}
\def\bewname{Beweis}
\def\@thmcounter#1{\noexpand\arabic{#1}}
\def\@thmcountersep{}
\def\@begintheorem#1#2{\it \trivlist \item[\hskip 
\labelsep{\bf #1\ #2.\quad}]}
\def\@opargbegintheorem#1#2#3{\it \trivlist
      \item[\hskip \labelsep{\bf #1\ #2.\quad{\rm #3}}]}
\newtheorem{theor}{\theoremname}[section]
\newtheorem{propo}[theor]{\propositionname}
\newtheorem{coro}[theor]{\corollaryname}
\newtheorem{lemm}[theor]{\lemmaname}
\newenvironment{thm}{\begin{theor}\it}{\end{theor}}
\newenvironment{theorem}{\begin{theor}\it}{\end{theor}}
\newenvironment{prop}{\begin{propo}\it}{\end{propo}}
\newenvironment{lem}{\begin{lemm}\it}{\end{lemm}}
\newenvironment{lemma}{\begin{lemm}\it}{\end{lemm}}
\newtheorem{rema}[theor]{\remarkname}
\newenvironment{rem}{\begin{rema}\rm}{\end{rema}}
\newtheorem{stepnow}[theor]{}
\newtheorem{defin}[theor]{\definitionname} %% write
\newenvironment{definition}{\begin{defin}\rm}{\end{defin}}
\newenvironment{defn}{\begin{defin}\rm}{\end{defin}}
\newtheorem{exerc}{\exercisename}[section]
\newtheorem{exa}[theor]{\examplename}
\newenvironment{ex}{\begin{exa}\rm}{\end{exa}}
\newtheorem{exas}[theor]{\examplesname}
\newenvironment{exs}{\begin{exas}\rm}{\end{exas}}
\newtheorem{conj}[theor]{\conjecturename}
\newtheorem{pro}[theor]{\problemname}
\newtheorem{prs}[theor]{\problemsname}
\newtheorem{aufg}{\aufgname}[section]
\newenvironment{prf}{\begin{proof}}{\end{proof}}
\qed\end{trivlist}}
\newenvironment{beweis*}{\begin{trivlist}\item[\hskip%
\labelsep{\bf\bewname.\quad}]}% 
{\end{trivlist}}
\newtheorem{satzn}[theor]{\satzname}
\newtheorem{koro}[theor]{\koroname}
\newtheorem{folg}[theor]{\folgname}
\newtheorem{bem}[theor]{\bemerkname}
\newtheorem{aufgn}[theor]{\aufgname}
\newtheorem{beis}[theor]{\beisname}
\newtheorem{beiss}[theor]{\beissname}
\newcommand{\braket}[2]{\la #1,#2 \ra}
\newcommand{\bD}{\mathbb D}
\newcommand{\Out}{\mathop{{\rm Out}}\nolimits}
\renewcommand{\phi}{\varphi}
\newcommand{\Han}{\mathop{{\rm Han}}\nolimits}
\renewcommand{\phi}{\varphi}
\renewcommand{\mlabel}{\label} 
\begin{document} 

%%%%%%%%%%%%%%%%%%%%%%%%%%

\title{Reflection positivity and Hankel operators---\\
the multiplicity free case} 
%\title{Reflection positive one-parameter groups---\\ 
%the multiplicity free case} 
\author{Maria Stella Adamo, Karl-Hermann Neeb, Jonas Schober}

\maketitle

\abstract{We analyze reflection positive representations in terms of positive 
Hankel operators. This is motivated by the fact that positive 
Hankel operators are described in terms of their 
Carleson measures, whereas the compatibility condition 
between representations and reflection positive Hilbert spaces 
is quite intricate. This leads us to the concept of a Hankel 
positive representation of triples $(G,S,\tau)$, where 
$G$ is a group, $\tau$ an involutive automorphism of $G$ and 
$S \subeq G$ a subsemigroup with $\tau(S) = S^{-1}$. 
For the triples $(\Z,\N,-\id_\Z)$, corresponding to reflection 
positive operators, and $(\R,\R_+,-\id_\R)$, corresponding to 
reflection positive one-parameter groups, we show that every 
Hankel positive representation can be made reflection positive 
by a slight change of the scalar product. A key method consists 
in using the measure $\mu_H$ on $\R_+$ defined by a positive Hankel 
operator $H$ on $H^2(\C_+)$ to define a Pick function whose imaginary 
part, restricted to the imaginary axis, provides an operator symbol for $H$.\\ 
Keywords: Hankel operator, reflection positive representation, 
Hardy space, Widom Theorem, Carleson measure, \\ 
MSC 2020: Primary 47B35; Secondary 47B32, 47B91.} 
\tableofcontents 

\vspace{1cm}

\section*{Introduction} 
\mlabel{sec:0}

This paper contributes to the operator theoretic 
background of \textit{reflection positivity}, a 
basic concept in constructive quantum field theory 
(\cite{GJ81, JOl98, JOl00, Ja08}) that recently 
required some interest from the perspective of the 
representation theory of Lie groups 
(see \cite{NO14, NO15} and the survey booklet \cite{NO18} which contains 
further references). 

The main novelty of this paper is that 
we analyze reflection positive representations in terms of positive 
Hankel operators. This is motivated by the fact that positive 
Hankel operators can be described nicely in terms of their 
Carleson measures, whereas the compatibility condition 
between representations and reflection positive Hilbert spaces 
is quite intricate. This leads us to the concept of a Hankel 
positive representation of a triple $(G,S,\tau)$, where 
$G$ is a group, $\tau$ an involutive automorphism of $G$ and 
$S \subeq G$ a subsemigroup with $\tau(S) = S^{-1}$. 
For the triples $(\Z,\N,-\id_\Z)$, corresponding to reflection 
positive operators, and $(\R,\R_+,-\id_\R)$, corresponding to 
reflection positive one-parameter groups, we show that every 
Hankel positive representation can be made reflection positive 
by a slight change of the scalar product. 

To introduce our abstract conceptual background, 
we define a  {\it symmetric semigroup} as a triple $(G,S,\tau)$, where $G$ is a group and  
$S \subeq G$ is a subsemigroup satisfying $\tau(S)^{-1} = S$, so that 
$s^\sharp := \tau(s)^{-1}$ defines an involution on $S$. 
A {\it representation of the pair $(G,S)$} is a triple 
$(\cE,\cE_+,U)$, where $U \:  G \to \U(\cE)$ is a unitary representation 
and $\cE_+ \subeq \cE$ is a closed subspace satisfying $U(S)\cE_+ \subeq \cE_+$. 
It is said to be {\it regular} if $\cE_+$ contains no 
non-zero $U(G)$-invariant subspace and the smallest $U(G)$-invariant 
subspace containing $\cE_+$ is $\cE$. 

Additional positivity is introduced by the concept of a 
{\it reflection positive Hilbert space}, which is a triple $(\cE,\cE_+,\theta)$,  
consisting of a Hilbert space 
$\cE$ with a unitary involution $\theta$ and a closed subspace  
$\cE_+$ satisfying 
\begin{equation}
  \label{eq:thetapos}
\la \xi,\xi\ra_\theta := \la \xi, \theta \xi \ra \geq 0 
\quad \mbox{  for } \quad \xi \in \cE_+.
\end{equation}
A {\it reflection positive representation} of $(G,S,\tau)$ 
is a quadruple $(\cE,\cE_+,\theta, U)$, where 
$(\cE, \cE_+, \theta)$ is a reflection positive Hilbert space 
and $(\cE,\cE_+,U)$ is a representation of the pair $(G,S)$ 
where $U$ and $\theta$ satisfy the following compatibility condition 
\begin{equation}
  \label{eq:rp1}
\theta U(g) \theta = U(\tau(g))\quad \mbox{ for } \quad g \in G.
\end{equation}
Any reflection positive representation specifies three 
representations: 
\begin{itemize}
\item[(L1)] the unitary representation $U$ of the group $G$ on $\cE$, 
\item[(L2)] the representation $U_+$ of the semigroup 
$S$ on $\cE_+$ by isometries,  
\item[(L3)] a $*$-representation $(\hat\cE, \hat U)$ of the involutive 
semigroup $(S,\sharp)$, induced by $U_+$ 
on the Hilbert space $\hat\cE$ obtained from the positive 
semidefinite form $\la \cdot, \cdot\ra_\theta$ on $\cE_+$. 
\end{itemize}

The difficulty in classifying reflection positive representations 
lies in the complicated compatibility conditions between 
$\cE_+$, $\theta$ and $U$. 
For the groups $G = \Z$ and $\R$ that we study in this paper, 
it is rather easy, resp., classical, to understand 
the regular representation $(\cE,\cE_+,U)$ of the pair $(G,S)$. 
For $(G,S) = (\Z,\N)$, this amounts to describe for a 
unitary operator $U$ all invariant subspaces $\cE_+$, and for 
$(G,S) = (\R,\R_+)$, one has to describe for a 
unitary one-parameter group $(U_t)_{t \in \R}$ 
all subspaces $\cE_+$ invariant under $(U_t)_{t > 0}$. 
Beuerling's Theorems for the disc and the upper half plane solve this problem 
in terms of inner functions (cf.~\cite[Thm.~6.4]{Pa88}, \cite{Sh64}). 
Adding to such triples $(\cE,\cE_+,U)$ 
a unitary involution $\theta$ such that $(\cE,\cE_+,\theta, U)$ is 
reflection positive is tricky because the $\theta$-positivity of $\cE_+$ 
is hard to control. 

Similarly, the description of all triples $(\cE,\theta, U)$ 
satisfying \eqref{eq:rp1} 
is the unitary representation theory of the semidirect product 
$G \rtimes \{ \id_G, \tau\}$, which is well-known for $\Z$ and $\R$. 
To fit in subspaces $\cE_+$ becomes complicated by 
the two requirements of $\theta$-positivity and $U(S)$-invariance of~$\cE_+$. 
  
The new strategy that we follow in this paper is to 
focus on the intermediate level (L2) of the representation 
$U_+$ of the involutive semigroup $(S,\sharp)$ by isometries on $\cE_+$. 
On this level, we introduce the concept 
of a {\it $U_+$-Hankel operator}. These are the 
operators $H \in B(\cE_+)$ satisfying 
\begin{equation}
  \label{eq:han1}
 H U_+(s) = U_+(s^\sharp)^* H \quad \mbox{ for } \quad s \in S.
\end{equation}
Although it plays no role for the representations of the pair $(G,S)$, 
the involution $\sharp$ on $S$ is a crucial ingredient of the concept of a 
Hankel operator.

To illustrate these structures, 
let us take a closer look at the triple $(\Z,\N_0,-\id_\Z)$, i.e., 
we study {\it reflection positive unitary operators} 
$U \in \U(\cE)$ on a reflection positive Hilbert space 
$(\cE,\cE_+,\theta)$, which means that 
\begin{equation}
  \label{eq:rp2} 
U\cE_+ \subeq \cE_+ \quad \mbox{ and } \quad \theta U \theta = U^*. 
\end{equation}
Classical normal form results for the isometry 
$S := U_+(1)$ on $\cE_+$ (assuming regularity) imply that the triple 
$(\cE,\cE_+,U)$ is equivalent to 
$(L^2(\T,\cK), H^2(\bD,\cK), U)$, where 
$\cK$ is a multiplicity space, 
\[ \bD = \{ z \in \C \: |z| < 1\} \] 
is the open unit disc, $H^2(\bD,\cK)$ is the $\cK$-valued Hardy space 
on $\bD$, and 
$(U(1)f)(z) = z f(z)$, $z \in \T$, is the multiplication operator 
corresponding to the bilateral shift on $L^2(\T,\cK)$.
Our assumption of multiplicity freeness 
means that $\cK  = \C$. In this case $U_+$-Hankel operators 
are precisely classical Hankel operators, realized as operators 
on $H^2(\bD)$. The difficult part in the classification 
of reflection positive operators consists in a description of 
all unitary involutions $\theta$ turning $(\cE,\cE_+,U) 
= (L^2(\T), H^2(\bD), U)$  
into a reflection positive representation. The compatibility 
with $U$ is easy to accommodate. It means that $\theta$ is of the 
form 
\[ (\theta_h f)(z) = h(z) f(\oline z) \quad \mbox{ with }\quad 
h \:\T \to \T,\ \  h(\oline z) = \oline{h(z)} \quad \mbox{ for } 
\quad z \in \T.\] 
The hardest part is to control the positivity of the form 
$\la \cdot,\cdot \ra_\theta$ on $\cE_+ = H^2(\bD)$. 
Here the key observation is that, if $P_+$ is the orthogonal projection 
$L^2(\T) \to H^2(\bD)$, then $H_h := P_+ \theta_h P_+^*$ 
is a Hankel operator whose positivity is equivalent to 
$(L^2(\T), H^2(\bD),\theta_h)$ being  reflection positive. 
Positive Hankel operators $H$ on $H^2(\bD)$ are most nicely classified 
in terms of their Carleson  measures $\mu_H$ on the interval 
$(-1,1)$ via the relation 
\[ \la \xi, H \eta \ra_{H^2(\bD)} 
= \int_{-1}^1 \oline{\xi(x)} \eta(x)\, d\mu_H(x)
\quad \mbox{ for } \quad \xi,\eta \in H^2(\bD).\] 
Widom's Theorem (see \cite{Wi66} and Theorem~\ref{thm:widom-disc} in the 
appendix) characterizes these measures in very explicit terms.  
Our main result on positive Hankel operators on the disc asserts that 
all these measures actually arise from reflection positive 
operators on weighted $L^2$-spaces $L^2(\T,\delta\, dz)$, where 
$\delta$ is a bounded positive weight for which $\delta^{-1}$ is also bounded 
(Theorem~\ref{thm:x.2}). 
As a consequence, the corresponding weighted Hardy space 
$H^2(\bD,\delta)$ coincides with $H^2(\bD)$, endowed with a slightly 
modified scalar product. 

The results for reflection positive one-parameter groups 
concerning $(\R,\R_+,-\id_\R)$ are similar. 
Here  the Lax--Phillips Representation Theorem 
shows that a regular multiplicity free representation 
$(\cE,\cE_+,U)$ of the pair $(\R,\R_+)$ 
is equivalent to 
$(L^2(\R), H^2(\C_+), U)$, where 
$\C_+ = \{ z \in \C \: \Im z > 0\}$ 
is the upper half-plane, $H^2(\C_+)$ is the Hardy space 
on $\C_+$, and $(U(t)f)(x) = e^{itx} f(x)$, $x \in \R$, 
is the multiplication representation. 
Again, $U_+$-Hankel operators are the classical Hankel operators on $H^2(\C_+)$ 
(cf.~\cite[p.~44]{Pa88}). 
The unitary involutions compatible with $U$ in the sense of \eqref{eq:rp1} are of the form 
\[ (\theta_h f)(x) = h(x) f(-x) \quad \mbox{ with }\quad 
h \:\R \to \T,\  \ h(-x) = \oline{h(x)} \quad \mbox{ for }  \quad x \in \R.\] 
Now $H_h := P_+ \theta_h P_+^*$ is a Hankel operator on $H^2(\C_+)$ 
and $(L^2(\R), H^2(\C_+),\theta_h)$ is reflection positive 
if and only if $H_h$ is positive. 
Instead of trying to determine all functions $h$ for which this is the case, 
we focus on positive Hankel operators $H$ on $H^2(\C_+)$ because they completely 
determine the $*$-representation in (L3). 
We prove a suitable version of Widom's Theorem for the upper half plane 
(Theorem~\ref{thm:widom-hp}) that characterizes 
the Carleson measures $\mu_H$ on $\R_+$ 
which are determined by 
\[ \la f,Hg \ra_{H^2(\C_+)} = \int_{\R_+} 
\oline{f(i\lambda)} g(i\lambda)\, d\mu_H(\lambda) 
\quad \mbox{ for } \quad f,g \in H^2(\C_+).\] 
For $\C_+$ we show that 
all these measures actually arise from reflection positive 
one-parameter groups on 
weighted $L^2$-spaces $L^2(\R,w\, dx)$, where 
$w$ is a bounded positive weight for which $w^{-1}$ is also bounded. 
As a consequence, the corresponding weighted Hardy space 
$H^2(\C_+,w\, dz)$ coincides with $H^2(\C_+)$ but is endowed with 
modified scalar product. 

Our key method of proof is to observe that the measure $\mu_H$ defines a 
holomorphic function 
\[ \kappa \: \C \setminus (-\infty,0] \to \C, \quad 
\kappa\left(z\right) := 
\int_{\R_+} \frac{\lambda}{1+\lambda^2}-\frac{1}{z+\lambda} \,d\mu_H\left(\lambda\right)\]
whose imaginary part defines a bounded function 
\[ h_H(p) :=  \frac{i}{\pi} \cdot \Im(\kappa(ip)) \] 
which is an operator symbol of $H$ 
(Theorem~\ref{thm:CarlesonRepresentant}). 
As $h(\R) \subeq i \R$, adding real constants, we obtain 
operator symbols for $H$ which are invertible in $L^\infty(\R)$, 
and this is used in Subsection~\ref{subsec:4.2} to show that, 
for $(\Z,\N,-\id_\Z)$ and $(\R,\R_+,-\id_\R)$ all 
multiplicity free regular Hankel positive representations 
can be made reflection positive by modifying the scalar product 
with an inverti\-ble intertwining operator. On the level 
of representations, this means to pass from the Hardy space 
$H^2(\bD)$, resp., $H^2(\C_+)$ to the Hardy space 
corresponding to a boundary measure with a positive bounded density 
whose inverse is also bounded. 
%Therefore the underlying function  spaces 
%do not change, only their scalar product does. 

Since the Banach algebras $H^\infty(\bD) \cong H^\infty(\C_+)$ play a 
central role in our arguments, we decided to discuss some of their key 
features in an appendix. 
In view of the Riemann Mapping 
Theorem, this can be done for an arbitrary proper simply connected 
domain $\Omega \subeq \C$, endowed with an antiholomorphic involution 
$\sigma$ that is used to define on $H^\infty(\Omega)$ the structure 
of a Banach $*$-algebra by $f^\sharp(z) := \oline{f(\sigma(z))}$. 
By Ando's Theorem, this algebra has a unique predual, so that it carries a 
canonical weak topology, which for $H^\infty(\bD)$ and $H^\infty(\C_+)$ is defined by 
integrating boundary values against $L^1$-functions on $\T$ and $\R$, respectively. In the literature, what will be called weak topology on $H^{\infty}(\Omega)$ with respect to the canonical pairing $(H^{\infty}(\Omega)_*,H^{\infty}(\Omega))$ is also known as the weak*-topology.
For this algebra we determine in particular all weakly continuous 
positive functionals and all weakly continuous characters. \\

\nin {\bf Structure of this paper:} In the short Section~\ref{sec:1} 
we introduce the concepts on an abstract level. In particular, we define 
reflection and Hankel positive representations of symmetric 
semigroups $(G,S,\tau)$. In particular, we show that 
reflection positive representations 
are in particular Hankel positive and that 
every Hankel positive representation 
defines a $*$-representation of $(S,\sharp)$ by bounded operators on 
the Hilbert space $\hat\cE$ defined by the $H$-twisted scalar product on 
$\cE_+$. We thus obtain the same three levels (L1-3) as for reflection 
positive representations. 

In Section~\ref{sec:2}  we connect our abstract setup 
with classical Hankel operators on $H^2(\bD)$. 
We study  reflection positive representations 
of the symmetric group $(\Z,\N,-\id_\Z)$, 
i.e., reflection positive operators, 
and relate the problem of their classification
to positive Hankel operators on the Hardy space $H^2(\bD)$. 
As these operators are classified by their Carleson measures on 
the interval $(-1,1)$, we recall in Appendix~\ref{app:a} 
Widom's classical theorem characterizing the Carleson measures 
on positive Hankel operators. 
In Section~\ref{sec:3} we proceed to 
reflection positive one-parameter groups. 
In this context, the upper half plane $\C_+$ 
plays the same role as the unit disc does for the discrete context 
and any regular multiplicity free representation of the pair $(\R,\R_+)$ 
is equivalent to the multiplication representation 
on $(L^2(\R), H^2(\C_+))$. We show that in this context 
the Hankel operators coincide with the classical Hankel operators 
on $H^2(\C_+)$ and translate Widom's Theorem to an analogous 
result on the upper half plane, where we realize the Carleson 
measures on the positive half-line $\R_+$ (Theorem~\ref{thm:widom-hp}). 
The key result of Subsection~\ref{subsec:4.1} 
is Theorem~\ref{thm:CarlesonRepresentant} 
asserting that $h_H$ is an operator symbol of $H$. The applications to reflection positivity are discussed in 
Subsection~\ref{subsec:4.2}, where we prove that Hankel positive representations $(\cE, \cE_+, U, H)$ of $(\Z,\N,-\text{id}_{\Z})$ and $(\R,\R_+,-\text{id}_{\R})$ respectively, are reflection positive if we change the inner product on $\cE_+$ obtained through a symbol for $H$. 
%Appendix~\ref{app:a} contains a proof of Widom's Theorem for the disc, 
Appendix~\ref{app:b} is devoted to the Banach $*$-algebras 
$(H^\infty(\Omega),\sharp)$ and in the short Appendix~\ref{app:k} 
we collect some formulas concerning Poisson and Szeg\"o kernels. \\

\nin {\bf Notation:} 
\begin{itemize}
\item $\R_+ = (0,\infty)$, $\bD = \{ z \in \C \: |z| <1\}$, 
$\C_+ = \R + i \R_+$ (upper half plane), $\C_r = \R_+ + i \R$ (right 
half plane), 
$\bS_\beta = \{ z \in \C \: 0 < \Im z < \beta\}$ (horizontal strip).
\item For a holomorphic function $f$ on $\C_+$, we write 
$f^*$ for its non-tangential limit function on $\R$; likewise 
for functions on $\bD$ and $\bS_\beta$. 
\item We write $\omega \: \bD \to \C_+,  \omega(z) := i \frac{1 + z}{1-z}$ 
for the Cayley transform with 
$\omega^{-1}(w) = \frac{w-i}{w+i}.$ 
\item On the circle $\T = \{ e^{i\theta} \in \C \:  \theta \in \R\}$, we 
use the length measure of total volume $2\pi$. 
\item For $w\in \C$ we write $e_w(z) := e^{zw}$ for the corresponding exponential 
function on $\C$. 
\item For a function $f \:  G \to \C$, we put 
$f^\vee(g) := f(g^{-1})$. 
\item We write $E^*$ for the dual of a Banach space~$E$.
\end{itemize}

\nin {\bf Acknowledgment:} 
We are most grateful to Daniel Belti\c t\u a for 
pointing out the references \cite{Pa88} and \cite{Pe98} on Hankel operators 
and for suggesting the connection of our work on 
reflection positivity with Hankel operators. 
We thank  Christian Berg 
for a nice short argument for the implication (b) $\Rarrow$ (c) in 
Widom's Theorem (Theorem~\ref{thm:widom-disc}).

MSA wishes to thank the Department of Mathematics, University of Erlangen, and the Mathematisches Forschungsinstitut Oberwolfach (MFO) for their hospitality. This work is part of two Oberwolfach Leibniz Fellowships with projects entitled ``Beurling--Lax type theorems and their connection with standard subspaces in Algebraic QFT'' and ``Reflection positive representations and standard subspaces in algebraic QFT''. MSA is part of the Gruppo Nazionale per l'Analisi Matematica, la Probabilit\`{a} e le loro applicationi (GNAMPA) of INdAM. MSA acknowledges the University of Rome ``Tor Vergata'' funding scheme ``Beyond Borders'' CUP E84I19002200005 and the support by the Deutsche Forschungsgemeinschaft (DFG) within the Emmy Noether grant CA1850/1-1. MSA wishes to thank Yoh Tanimoto for his insightful comments and suggestions. KHN acknowledges support by DFG-grant NE 413/10-1.

\section{Hankel operators for 
reflection positive representations} 
\mlabel{sec:1}

In this section we first recall the concept of a 
reflection positive representations of symmetric semigroups in the sense of 
\cite{NO18}. In this abstract context we introduce the notion of a 
Hankel operator (Definition~\ref{def:1.4}). 
Below it will play a key role in our analysis 
of the concrete symmetric semigroups 
$(\Z,\N_0,-\id_\Z)$ and $(\R,\R_+, -\id_\R)$, 
where it specializes to the classical concept of a Hankel 
operator on $H^2(\bD)$ and $H^2(\C_+)$, respectively.

A {\it reflection positive Hilbert space} 
is a triple $(\cE,\cE_+,\theta)$,  
consisting of a Hilbert space 
$\cE$ with a unitary involution $\theta$ and a closed subspace  
$\cE_+$ satisfying 
\[ \la \xi,\xi\ra_\theta := \la \xi, \theta \xi \ra \geq 0 
\quad \mbox{  for } \quad \xi \in \cE_+.\]
This structure immediately leads to a new Hilbert space 
$\hat\cE$ that we obtain from the positive semidefinite form  
$\la \cdot,\cdot\ra_\theta$ on $\cE_+$ by completing the quotient of 
$\cE_+$ by the subspace of null vectors. 
We write $q \: \cE_+ \to \hat \cE, \xi \mapsto \hat\xi$ for the natural map. 

\begin{defn} A {\it symmetric semigroup} 
is a triple $(G,S,\tau)$, where $G$ is a group, 
$\tau$ is an involutive automorphism of $G$, and  
$S \subeq G$ is a subsemigroup invariant under 
$s \mapsto s^\sharp := \tau(s)^{-1}$. 
\end{defn} 

In the present paper we shall only be concerned with the 
two examples $(\Z,\N_0, -\id_\Z)$ and $(\R,\R_+, -\id_\R)$. 
As it creates no additional difficulties, we formulate the concepts in 
this short section on the abstract level.

\begin{defn}   \label{def:2.2.3} Let $(G,S, \tau)$ be a symmetric semigroup. \\
\nin (a) A {\it representation of the pair $(G,S)$} is a triple 
$(\cE,\cE_+,U)$, where $U \: G \to \U(\cE)$ is a unitary representation, 
$\cE_+ \subeq \cE$ is a closed subspace and $U(S)\cE_+ \subeq \cE_+$. 
We call $(\cE,\cE_+, U)$ {\it regular} if 
\begin{equation}
  \label{eq:regular}
\oline{\Spann(U(G)\cE_+)} = \cE \quad \mbox{ and }\quad 
\bigcap_{g \in G} U(g)\cE_+ = \{0\}.
\end{equation}
This means that $\cE_+$ contains no non-zero $U(G)$-invariant 
subspace and that $\cE$ is the only closed $U(G)$-invariant subspace 
containing $\cE_+$. 

\nin (b) 
A {\it reflection positive representation of $(G,S,\tau)$} is a 
quadruple $(\cE,\cE_+,\theta, U)$, where 
$(\cE, \cE_+, \theta)$ is a reflection positive Hilbert space, 
$(\cE,\cE_+, U)$ is a representation of the pair $(G,S)$ and, in addition, 
\begin{equation}
  \label{eq:rp1b}
\theta U(g) \theta = U(\tau(g)) \quad \mbox{ for } \quad g \in G
\end{equation}
 (\cite[Def.~3.3.1]{NO18}. 
\end{defn} 

\begin{defn} \mlabel{def:1.4}
If $(S,\sharp)$ is an involutive semigroup 
and $U_+ \: S \to B(\cF)$ a representation of $S$ by bounded operators 
on the Hilbert space $\cF$, then we call 
$A \in B(\cF)$ a {\it $U_+$-Hankel operator} if 
\begin{equation}
  \label{eq:hankeldef}
 A U_+(s) = U_+(s^\sharp)^* A \quad \mbox{ for } \quad s \in S.
\end{equation}
We write $\Han_{U_+}(\cF) \subeq B(\cF)$ for the subspace of 
$U_+$-Hankel operators. 

If $U_+^\vee(s) := U_+(s^\sharp)^*$ denotes the {\it dual representation} 
of $S$ on $\cF$, then \eqref{eq:hankeldef} means that 
Hankel operators are the intertwining 
operators $(\cF,U_+) \to (\cF,U_+^\vee)$. 
\end{defn}

\begin{lem}
  \mlabel{lem:1.5} Let $(G,S,\tau)$ be a symmetric semigroup, 
$(\cE,\cE_+, U)$ be a representation of the pair $(G,S)$, 
and $P_+ \: \cE \to \cE_+$ be the orthogonal projection. 
If $A \in B(\cE)$ satisfies 
\begin{equation}
  \label{eq:interrepo}
 A U(g) = U(\tau(g))  A = U(g^\sharp)^* A
\quad \mbox{ for } \quad g \in G,
\end{equation}
then 
\[ H_A := P_+ A P_+^* \in B(\cE_+) \] 
is a $U_+$-Hankel operator for the representation of $S$ in $\cE_+$ 
by $U_+(s) := U(s)\res_{\cE_+}$. 

If, in addition, $R$ is unitary in $\cE$ 
satisfying $R U(g) R^{-1} = U(\tau(g))$ for $g \in G$, then 
$A \in B(\cH)$ satisfies \eqref{eq:interrepo} if and only if 
$A = B R$ for some $B \in U(G)'$. 
\end{lem}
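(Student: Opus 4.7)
For the first assertion, the plan is to unfold the definition of a $U_+$-Hankel operator and exploit two inputs: the intertwining relation $AU(s)=U(\tau(s))A$ (which, since $U$ is unitary and $s^\sharp=\tau(s)^{-1}$, can equivalently be written with $U(\tau(s))=U(s^\sharp)^*$), and the $U(s^\sharp)$-invariance of $\cE_+$, valid because $s^\sharp\in S$. The subtle point is that $U_+(s^\sharp)^*$ is the adjoint inside $B(\cE_+)$, \emph{not} the restriction of $U(s^\sharp)^*$ from $B(\cE)$. The invariance of $\cE_+$ under $U(s^\sharp)$ will yield the identity $U_+(s^\sharp)^*=P_+U(s^\sharp)^*P_+^*$. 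With this in hand, I would compute
\[ H_A U_+(s) = P_+ A U(s) P_+^* = P_+ U(s^\sharp)^* A P_+^*, \qquad U_+(s^\sharp)^* H_A = P_+ U(s^\sharp)^* (P_+^* P_+) A P_+^*, \]
and the proof then reduces to showing that $P_+ U(s^\sharp)^*$ annihilates $\cE_+^\perp$; this is the adjoint form of the invariance $U(s^\sharp)\cE_+\subseteq\cE_+$, since for $\eta\in\cE_+^\perp$ and $\xi\in\cE_+$ one has $\langle U(s^\sharp)^*\eta,\xi\rangle=\langle\eta,U(s^\sharp)\xi\rangle=0$.

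For the second assertion, I would establish the bijection $A\leftrightarrow B:=AR^{-1}$ between the solution set of \eqref{eq:interrepo} and $U(G)'$. Using $\tau^2=\id_G$, the hypothesis $RU(g)R^{-1}=U(\tau(g))$ rewrites as $R^{-1}U(g)=U(\tau(g))R^{-1}$. Combining this with \eqref{eq:interrepo} applied to $\tau(g)$ (giving $AU(\tau(g))=U(g)A$), one obtains
\[ BU(g) = AR^{-1}U(g) = AU(\tau(g))R^{-1} = U(g)AR^{-1} = U(g)B, \]
so $B\in U(G)'$. The converse direction is the same chain read backwards: if $B$ commutes with all $U(g)$, then $A:=BR$ satisfies $AU(g)=BRU(g)=BU(\tau(g))R=U(\tau(g))BR=U(\tau(g))A$, so $A$ solves \eqref{eq:interrepo}.

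The main obstacle is purely notational: one has to keep careful track of where the projection $P_+^*P_+$ sits when passing between $B(\cE)$ and $B(\cE_+)$, and resist the temptation to identify $U_+(s^\sharp)^*$ with the naive restriction $U(s^\sharp)^*|_{\cE_+}$. Once the projections are correctly placed, everything else is a short formal computation, and the $\tau^2=\id_G$ symmetry makes the second part essentially a one-line cancellation.
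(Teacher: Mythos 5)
Your proposal is correct and follows essentially the same route as the paper: both parts rest on the intertwining relation \eqref{eq:interrepo} together with the $U(s^\sharp)$-invariance of $\cE_+$ (for $s^\sharp\in S$), and the second part is exactly the paper's observation that $B:=AR^{-1}$ commutes with $U(G)$. The only cosmetic difference is that you carry out the first computation at the operator level with explicit projections, whereas the paper verifies the same identity via matrix coefficients $\la\xi,\cdot\,\eta\ra$ for $\xi,\eta\in\cE_+$; the mathematical content is identical.
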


\begin{prf} For the first assertion, we observe that, 
for $s \in S$ and $\xi, \eta \in \cE_+$, we have 
\begin{align*}
 \la \xi, H_A U_+(s)\eta \ra 
&= \la \xi, A U(s) \eta \ra 
\ {\buildrel {\eqref{eq:interrepo}} \over =}\ \la \xi, U(\tau(s)) A \eta \ra 
= \la U(s^\sharp) \xi, A \eta \ra \\
& =  \la U_+(s^\sharp) \xi, H_A \eta \ra 
= \la  \xi, U_+(s^\sharp)^* H_A \eta \ra.
\end{align*}
The second assertion follows from the fact that 
$B := AR^{-1}$ commutes with $U(G)$. 
\end{prf}

\begin{lem} Hankel operators have the following elementary properties: 
  \begin{itemize}
  \item[\rm(a)] If 
$H \in \Han_{U_+}(\cF)$, then $H^* \in \Han_{U_+}(\cF)$. 
  \item[\rm(b)] If $H \in \Han_{U_+}(\cF)$ 
and $B$ commutes with $U_+(S)$, then 
$HB$ and $B^*H$ are Hankel operators. 
  \end{itemize}
\end{lem}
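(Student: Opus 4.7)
Both parts reduce to straightforward algebraic manipulation of the defining Hankel identity $HU_+(s) = U_+(s^\sharp)^* H$, exploiting that $\sharp$ is an involution. No analytical input will be needed, and the only mild subtlety---the use of $(s^\sharp)^\sharp = s$ in (a)---is immediate from the definition of a symmetric semigroup, so I do not anticipate any genuine obstacle.

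For part (a), my plan is to take adjoints of the defining relation, producing $U_+(s)^* H^* = H^* U_+(s^\sharp)$ for every $s \in S$. Since this identity holds for all $s \in S$, I will then substitute $s^\sharp$ in place of $s$ and use the involutive property $(s^\sharp)^\sharp = s$ to rewrite it as $U_+(s^\sharp)^* H^* = H^* U_+(s)$, which is exactly the Hankel condition \eqref{eq:hankeldef} for $H^*$.

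For part (b), I will use the commutation relation $B U_+(s) = U_+(s) B$. To verify that $HB$ is Hankel, the plan is to slide $B$ past $U_+(s)$ and then apply the Hankel identity for $H$, obtaining $(HB) U_+(s) = H U_+(s) B = U_+(s^\sharp)^* HB$. For $B^*H$, I will first pass to adjoints in the commutation relation to obtain $U_+(s)^* B^* = B^* U_+(s)^*$ for all $s$, which, applied with $s$ replaced by $s^\sharp$, yields $B^* U_+(s^\sharp)^* = U_+(s^\sharp)^* B^*$; then $(B^* H) U_+(s) = B^* U_+(s^\sharp)^* H = U_+(s^\sharp)^* (B^* H)$, as required.
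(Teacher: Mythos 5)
Your proposal is correct and follows essentially the same elementary algebraic route as the paper: part (a) by taking adjoints of the Hankel relation and replacing $s$ by $s^\sharp$, and the $HB$ case of (b) by sliding $B$ past $U_+(s)$. The only cosmetic difference is that for $B^*H$ you verify the identity directly using that $B^*$ commutes with $U_+(s^\sharp)^*$, whereas the paper writes $B^*H=(H^*B)^*$ and invokes part (a); both are equally valid.
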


\begin{prf} (a) If $H\in \Han_{U_+}(\cF)$ and $s \in S$, then 
\[ H^* U_+(s) 
= (U_+(s)^* H)^*
= (U_+^\vee(s^\sharp) H)^*
= (H U_+(s^\sharp))^*
= U_+(s^\sharp)^* H^*.\] 

\nin (b) Let $H \in \Han_{U_+}(\cF)$ and 
suppose that $B$ commutes with $U_+(S)$. 
Then 
\[ H B U_+(s) = H U_+(s) B = U_+^\vee(s) H B \quad \mbox{ for } \quad 
s \in S \] 
implies that $HB \in \Han_{U_+}(\cF)$. Taking adjoints, 
we obtain $B^*H= (H^*B)^* \in \Han_{U_+}(\cF)$ with~(a). 
\end{prf}

\begin{defn} (Hankel positive representations) 
Let $(G,S,\tau)$ be a symmetric semigroup. 
Then a {\it Hankel positive representation} 
is a quadruple  $(U,\cE,\cE_+, H)$, 
where $(\cE,\cE_+,U)$ is a re\-presentation of the pair $(G,S)$, 
and $H \in \Han_{U_+}(\cE_+)$ is a positive Hankel 
operator for the representation $U_+(s) := U(s)\res_{\cE_+}$ 
of $S$ by isometries on $\cE_+$.
\end{defn}

\begin{ex} \mlabel{ex:1.5} 
(a)  Let $(\cE,\cE_+,U)$ be a representation of the pair $(G,S)$ 
and $\theta \: \cE \to \cE$ a unitary involution satisfying 
$\theta U(g) \theta = U(\tau(g))$ for $g\in G$ (see \eqref{eq:rp1}). 
Then Lemma~\ref{lem:1.5} implies  that 
\[ H_\theta := P_+ \theta P_+^* \in B(\cE_+) \] 
is a $U_+$-Hankel operator. It is positive if and only if 
$(\cE,\cE_+,\theta)$ is reflection positive. 

\nin (b) The identity $\1 \in B(\cF)$ is a $U_+$-Hankel operator 
if and only if the two representations $U$ and $U^\vee$ coincide, 
i.e., if $U$ is a $*$-representation of the involutive semigroup 
$(S,\sharp)$. 
 If $U_+(S)$ consists of isometries,  this is only possible 
if all operators $U_+(s)$ are unitary and $U_+(s^\sharp) = U_+(s)^{-1}$. 
In the context of (a), this leads to the case where 
$U_+(s) \cE_+ = \cE_+$ for $s \in S$.
\end{ex}

The following proposition shows that a positive Hankel 
operator $H$ immediately leads to a \break {$*$-representation} of~$S$ 
on the Hilbert space defined by $H$ via the scalar product 
$\la \xi,\eta\ra_H := \la \xi,H\eta \ra$.

In the context of reflection positive representations 
(Example~\ref{ex:1.5}), 
the passage from the representation $(\cE_+, U_+)$ of 
$S$ by isometries to the $*$-representation 
on $(\hat\cE,\hat U)$ by contractions 
is called the {\it Osterwalder--Schrader transform}, 
see \cite{NO18} for details. In this sense, the following 
Proposition~\ref{prop:1.6} generalizes 
the Osterwalder--Schrader transform. 

\begin{prop} \mlabel{prop:1.6}
Let $U_+ \: S \to B(\cF)$ be a representation 
of the involutive semigroup $(S,\sharp)$ by bounded operators on $\cF$ 
and $H \geq 0$ be a positive $U_+$-Hankel operator on $\cF$. 
Then 
\[ \la \xi,\eta\ra_H := \la \xi,H \eta \ra_{\cF} \] 
defines a 
positive semidefinite hermitian form on $\cF$. We write 
$\hat \cF$ for the associated Hilbert space and 
$q \:  \cF \to \hat \cF$ for the canonical map. 
Then there exists a uniquely determined $*$-representation 
\begin{equation}
  \label{eq:os1}
  \hat U \:  (S,\sharp) \to B(\hat\cF) 
\quad \mbox { satisfying } \quad 
 \hat U(s) \circ q = q \circ U_+(s) \quad \mbox{ for } \quad s \in S.
\end{equation}
\end{prop}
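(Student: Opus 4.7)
The plan is to adapt the standard GNS-type construction, using only the Hankel relation \eqref{eq:hankeldef} and positivity of $H$. Since $H \ge 0$, the null space of the form $\la\cdot,\cdot\ra_H$ coincides with $\ker H$, which is precisely the kernel of the quotient map $q$. The first step is to verify that each $U_+(s)$ preserves $\ker H$: if $H\xi = 0$, then \eqref{eq:hankeldef} gives $H U_+(s)\xi = U_+(s^\sharp)^* H\xi = 0$. Consequently $U_+(s)$ descends to a well-defined linear map $\hat U(s)$ on the dense subspace $q(\cF) \subeq \hat\cF$ via the formula in \eqref{eq:os1}, and uniqueness of $\hat U(s)$ on $\hat\cF$ is then forced by density.

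The main step will be showing that $\hat U(s)$ extends to a bounded operator on $\hat\cF$. Using \eqref{eq:hankeldef} I compute
\[
\|\hat U(s)\hat\xi\|_{\hat\cF}^2 = \la U_+(s)\xi, H U_+(s)\xi\ra_\cF = \la U_+(s^\sharp s)\xi, H\xi\ra_\cF = \la U_+(s^\sharp s)\xi, \xi\ra_H,
\]
and the Cauchy--Schwarz inequality for the positive semidefinite form $\la\cdot,\cdot\ra_H$ yields
\[
\|\hat U(s)\hat\xi\|_{\hat\cF}^2 \le \|\hat U(s^\sharp s)\hat\xi\|_{\hat\cF}\cdot\|\hat\xi\|_{\hat\cF}.
\]
Setting $t := s^\sharp s$, the fact that $\sharp$ is an involutive antihomomorphism gives $t^\sharp = t$, and the inequality iterates to $\|\hat U(t)\hat\xi\|^{2^n} \le \|\hat U(t^{2^n})\hat\xi\|\cdot\|\hat\xi\|^{2^n-1}$. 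The right-hand side is controlled by the crude estimate $\|\hat U(r)\hat\xi\|_{\hat\cF}^2 \le \|H\|\cdot\|U_+(r)\|^2\cdot\|\xi\|_\cF^2$, which specializes to $\|\hat U(t^{2^n})\hat\xi\|\le \|H\|^{1/2}\|U_+(t)\|^{2^n}\|\xi\|_\cF$. Taking $2^n$-th roots and letting $n\to\infty$, the factors $\|H\|^{1/2^{n+1}}$ and $\|\xi\|_\cF^{1/2^n}$ tend to $1$ and the exponent of $\|\hat\xi\|_{\hat\cF}$ tends to $1$, so I expect the estimate $\|\hat U(t)\hat\xi\|_{\hat\cF}\le\|U_+(t)\|\cdot\|\hat\xi\|_{\hat\cF}$, and hence $\|\hat U(s)\|\le\|U_+(s^\sharp s)\|^{1/2}$.

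To conclude, I would check that $\hat U$ is a $*$-representation of $(S,\sharp)$. Multiplicativity $\hat U(st) = \hat U(s)\hat U(t)$ is immediate from \eqref{eq:os1} applied on the dense subspace $q(\cF)$, together with multiplicativity of $U_+$. The adjoint relation $\hat U(s^\sharp) = \hat U(s)^*$ is read off directly from \eqref{eq:hankeldef}: for $\xi,\eta\in\cF$,
\[
\la \hat U(s^\sharp)\hat\xi,\hat\eta\ra_{\hat\cF} = \la U_+(s^\sharp)\xi, H\eta\ra_\cF = \la \xi, U_+(s^\sharp)^*H\eta\ra_\cF = \la \xi, H U_+(s)\eta\ra_\cF = \la\hat\xi,\hat U(s)\hat\eta\ra_{\hat\cF}.
\]
The only genuinely delicate point will be the iterated Cauchy--Schwarz bound used to control $\|\hat U(s)\|$; this is the usual subtlety in GNS-type constructions when $U_+$ is only a representation by bounded operators rather than a $*$-representation itself, and it is the reason one needs the involutive structure on $S$ rather than mere semigroup invariance.
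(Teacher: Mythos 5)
Your proposal is correct and follows essentially the same route as the paper: well-definedness of $\hat U(s)$ on $q(\cF)$ from invariance of the null space, the adjoint relation read off from \eqref{eq:hankeldef}, and boundedness via the bound $\|\hat U(s)\|\le\|U_+(s^\sharp s)\|^{1/2}$. The only cosmetic difference is that you carry out the iterated Cauchy--Schwarz argument explicitly, whereas the paper delegates exactly this step to \cite[Lemma~II.3.8(ii)]{Ne99}.
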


\begin{prf} 
For every $s \in S$ and $\xi,\eta \in \cF$,  we have 
\begin{equation}
  \label{eq:sharprel1}
\la \xi, U_+(s) \eta \ra_H = \la \xi, H U_+(s) \eta \ra 
= \la \xi, U_+(s^\sharp)^* H \eta \ra 
= \la U_+(s^\sharp) \xi, H \eta \ra 
= \la U_+(s^\sharp) \xi, \eta\ra_H.
\end{equation}
If $q(\eta) = 0$, i.e., $\la \eta, H\eta \ra = 0$, 
then this relation implies that $q(U_+(s)\eta) = 0$. Therefore 
$\hat U(s) q(\eta) := q(U_+(s)\eta)$ defines a linear operator 
on the dense subspace $\cD := q(\cF) \subeq \hat\cF$. 
It also follows from \eqref{eq:sharprel1} that 
$(\hat U, \cD)$ is a $*$-representation of the involutive 
semigroup $(S,\sharp)$. 

To see that the operators $\hat U(s)$ are bounded, we observe that, 
for every $n \in \N_0$ and $\eta \in \cF$, we have 
\[ \|\hat U(s^\sharp s)^n q(\eta)\|_{\hat\cF}^2 
= \la U_+(s^\sharp s)^n \eta, H U_+(s^\sharp s)^n \eta \ra 
\leq \|H\| \|U_+(s^\sharp s)\|^{2n} \|\eta\|^2.\] 
Now \cite[Lemma~II.3.8(ii)]{Ne99} implies that 
\[ \|\hat U(s)\| \leq \sqrt{\|U_+(s^\sharp s)\|}
\leq  \max(\|U_+(s)\|, \|U_+(s^\sharp)\|) \quad \mbox{ for } \quad 
s \in S.\] 
We conclude that the operators $\hat U(s)$ are contractions, hence extend 
to operators on $\hat\cF$. We thus obtain a $*$-representation 
of $(S,\sharp)$. Clearly, this representation is uniquely determined 
by the equivariance requirement \eqref{eq:os1}. 
\end{prf}

The construction in Proposition~\ref{prop:1.6} shows that 
every Hankel positive representation \break 
$(U,\cE,\cE_+,H)$ of $(G,S,\tau)$ defines a 
$*$-representation of $S$ by bounded operators on 
the Hilbert space $\hat\cE$ defined by the $H$-twisted scalar product on 
$\cE_+$. So we obtain the same three levels (L1-3) as for reflection 
positive representations. 

\begin{rem}  Let $\cE$ be a Hilbert space, 
$\cE_+ \subeq \cE$ a closed subspace, 
and $R \in \U(\cE)$ a unitary involution  with $R(\cE_+) = \cE_+^\bot$. 
Then 
\[ A^\sharp := R^{-1} A^* R \] 
defines an antilinear involution on $B(\cE)$ leaving the subalgebra 
\[ \cM := \{ A \in B(\cE) \: A \cE_+ \subeq \cE_+ \}\] 
invariant. In fact, $A \in \cM$ implies that $A^*\cE_+^\bot\subeq \cE_+^\bot$, 
so that 
\[ A^\sharp \cE_+ = R^{-1} A^* R \cE_+ 
=  R^{-1} A^* \cE_+^\bot 
\subeq R^{-1} \cE_+^\bot = \cE_+.\] 
\end{rem} 

\begin{exs} \mlabel{ex:hankel1} (a) For 
$\cE = L^2(\T) \supeq \cE_+ = H^2(\bD)$ 
and $(Rf)(z) = \oline z f(\oline z)$, we have 
$\cM = H^\infty(\bD)$ (\cite[\S 1.8.3]{Ni19}) 
and the corresponding 
involution is given by 
\begin{equation}
  \label{eq:1.12a}
 f^\sharp(z) := \oline{f(\oline z)} \quad \mbox{ for } \quad z \in \cD.
\end{equation}
For this example Hankel operators will be discussed in 
Theorem~\ref{thm:hankel-disc}. 

\nin (b) For  $\cE = L^2(\R) \supeq 
\cE_+ = H^2(\C_+)$, we have $(Rf)(x) = f(-x)$ 
with $\cM = H^\infty(\C_+)$, endowed with the  involution 
\begin{equation}
  \label{eq:1.12b}
 f^\sharp(z) := \oline{f(-\oline z)} \quad \mbox{ for }  \quad z \in \C_+.
\end{equation}
Let $H = P_+ h R P_+^*$, $h \in L^\infty(\R)$ be a Hankel 
operator on $H^2(\C_+)$ 
(cf.\ Theorem~\ref{thm:3.5}) and $g \in H^\infty(\C_+)$. Then the corresponding 
multiplication operator $m_g$ on $H^2(\C_+)$ satisfies 
\[ H m_g = P_+ h R P_+^* m_g 
= P_+ h R m_g P_+^* = P_+ h (g^*)^\vee R P_+^*,\] 
where $(g^*)^\vee(x)= g^*(-x)$ for $x \in \R$. This also is a 
Hankel operator, where $h$ has been modified by~$(g^*)^\vee$. 
We shall use this procedure in Theorem~\ref{thm:x.1} to pass from 
Hankel positive representations to reflection positive ones. 
\end{exs}

\section{Reflection positivity 
and Hankel operators} 
\mlabel{sec:2}

In this section we connect the abstract context from the 
previous section with classical Hankel operators on $H^2(\bD)$. 
We study reflection positive operators 
as reflection positive representations 
of the symmetric group $(\Z,\N,-\id_\Z)$ and 
relate the problem of their classification
 to positive Hankel operators on the Hardy space $H^2(\bD)$ on the open 
unit disc $\bD \subeq \C$. 

\begin{defn} A {\it reflection positive operator} 
on a reflection positive Hilbert space 
$(\cE,\cE_+,\theta)$ is a unitary operator 
$U \in \U(\cE)$ such that 
\begin{equation}
  \label{eq:repo-op}
U\cE_+ \subeq \cE_+ \quad \mbox{ and } \quad \theta U \theta = U^*. 
\end{equation}
\end{defn}
It is easy to see that reflection positive operators 
are in one-to-one correspondence with  reflection positive 
representations of $(\Z,\N,-\id_\Z)$: 
If $(\cE,\cE_+,\theta,U)$ is a reflection positive 
representation of the symmetric semigroup $(\Z,\N, -\id_\Z)$, 
then  $U(1)$ is a reflection positive operator. 
If, conversely, $U$ is a reflection positive operator, 
then $U(n) := U^n$ defines a reflection positive representation 
of $(\Z,\N,-\id_\Z)$. 
Accordingly, we say that a reflection positive operator 
$U$ is {\it regular} if 
\[ \bigcap_{n \in \Z} U^n\cE_+  
= \bigcap_{n > 0} U^n\cE_+ \ {\buildrel !\over =}\ \{0\} \quad \mbox{ and  }\quad 
\oline{\bigcup_{n \in\Z} U^n\cE_+} = 
\oline{\bigcup_{n < 0} U^n\cE_+} \ {\buildrel !\over =}\  \cE\] 
(cf.\ Definition~\ref{def:2.2.3}). 
If this is the case, we obtain for 
$\cK := \cE_+ \cap (U\cE_+)^\bot$ a 
unitary equivalence from $(\cE,\cE_+, U)$ to 
\[ (\ell^2(\Z,\cK), \ell^2(\N_0,\cK), S),\] where 
$S$ is the right shift (Wold decomposition, \cite[Thm.~I.1.1]{SzNBK10}). 

We would like to classify quadruples $(\cE,\cE_+,\theta,U)$, 
where $U$ is a regular reflection positive operator, up to unitary 
equivalence. In the present paper we restrict ourselves to the 
{\it multiplicity free case}, where $\cK = \C$, so that the triple 
$(\cE,\cE_+,U)$ is equivalent to 
$(\ell^2(\Z), \ell^2(\N_0), S)$, where $S$ is the right shift. 
For our purposes, it is most convenient to 
identify $\ell^2(\Z)$ with $L^2(\T)$ and 
$\ell^2(\N_0)$ with the Hardy space $H^2(\bD)$ of the unit disc 
$\bD$, so that 
the shift operator acts by  $(Sf)(z) = z f(z)$ for $z \in \T$.

We now want to understand the possibilities for adding 
a unitary involution $\theta$ for which $H^2(\bD)$ is 
$\theta$-positive. 
For $f \: \T \to \C$ we define 
\[ f^\sharp(z) := \oline{f(\oline z)} \quad \mbox{ for } \quad z \in \T\] 
(cf.\ \eqref{eq:1.12a} and \eqref{eq:hinftysharp} in Appendix~\ref{app:b}). 
Then any  involution $\theta$ satisfying $\theta S \theta = S^{-1}$ 
has the form 
\[ \theta_h(f)(z) = h(z) f(\oline z)\quad \mbox{ for } \quad z \in \T, \] 
where $h  \in L^\infty(\T)$ satisfies $h^\sharp = h$ and 
$h(\T) \subeq \T$ (cf.~Lemma~\ref{lem:1.5}).
As any $h \in L^\infty(\T)$ defines a Hankel operator 
\begin{equation}
  \label{eq:defHh}
 H_h := P_+ \theta_h P_+^* \in B(H^2(\bD)),
\end{equation}
this leads us naturally to Hankel operators on $H^2(\bD)$. 
If $h$ is unimodular with $h^\sharp = h$, so that $\theta_h$
 is a unitary involution, then $H_h$ is positive if and only 
if $\cE_+$ is $\theta_h$-positive. 

The following theorem characterizes Hankel 
operators from several perspectives. 
Condition (a) provides the consistency 
with the abstract concept of a $U$-Hankel 
operator from Definition~\ref{def:1.4}. 
The equivalence of (a) and (c) is well known (\cite[p.~180]{Ni02}).

\begin{thm} {\rm(Characterization of Hankel Operators on the disc)} 
\mlabel{thm:hankel-disc}
Consider a bounded operator $D$ on $H^2(\bD)$, 
the shift operator $(SF)(z) =z F(z)$, 
and the multiplication operators $m_g$ defined by $g \in H^\infty(\bD)$ on 
$H^2(\bD)$. Then the following are equivalent: 
\begin{itemize}
\item[\rm(a)] The Rosenblum relation
  \begin{footnote}
    {See \cite{Ro66}, \cite[p.~205]{Ni02}.}
  \end{footnote}
$D S = S^* D$ holds for the shift operator $(Sf)(z) = z f(z)$, 
i.e., $D$ is a Hankel operator for the representation 
of $(\N,+)$ on $H^2(\bD)$ defined by $U_+(n) := S^n$. 
\item[\rm(b)] $D m_g = m_{g^\sharp}^* D$ for all $g \in H^\infty(\bD)$, 
i.e., $D$ is a Hankel operator for the representation 
of the involutive algebra $(H^\infty(\bD), \sharp)$ on 
$H^2(\bD)$ by multiplication operators. 
\item[\rm(c)] There exists $h \in L^\infty(\T)$ with 
$D = P_+ m_h R P_+^*$ for $R(F)(z) := \oline z F(\oline z)$, $z \in \bD$, 
i.e., $D$ is a bounded Hankel operator on $H^2(\bD)$ in the classical sense. 
\end{itemize}
\end{thm}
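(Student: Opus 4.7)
The plan is to verify the cycle $(b)\Rightarrow(a)\Rightarrow(b)$ together with $(c)\Rightarrow(a)\Rightarrow(c)$, so that the substantial analytic content is isolated in the last implication, which is Nehari's theorem in disguise. The implication $(b)\Rightarrow(a)$ is immediate: apply $(b)$ with $g(z)=z\in H^{\infty}(\bD)$, for which $g^{\sharp}(z)=\overline{g(\bar z)}=z$, so $m_g=m_{g^{\sharp}}=S$ and $(b)$ collapses to the Rosenblum relation $DS=S^{*}D$.

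For $(a)\Rightarrow(b)$ I would first iterate $(a)$ to get $DS^{n}=(S^{*})^{n}D$, i.e.\ $Dm_{z^n}=m_{(z^n)^{\sharp}}^{*}D$ since $(z^n)^{\sharp}=z^n$; linearity then yields $Dm_p=m_{p^{\sharp}}^{*}D$ for every polynomial $p$. To pass from polynomials to all $g\in H^{\infty}(\bD)$ I rely on Appendix~\ref{app:b}, which provides that polynomials are weakly dense in $H^{\infty}(\bD)$ and that both $g\mapsto m_g$ and $g\mapsto m_{g^{\sharp}}^{*}$ are continuous from the weak topology of $H^{\infty}(\bD)$ to the weak operator topology on $B(H^{2}(\bD))$.

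The implication $(c)\Rightarrow(a)$ follows by a direct computation on $L^{2}(\T)$, based on the elementary identities $Rm_z=m_{\bar z}R$ and $P_{+}m_{\bar z}=P_{+}m_{\bar z}P_{+}$, the second of which holds because $m_{\bar z}$ leaves $H^{2}(\bD)^{\perp}$ invariant. For $F\in H^{2}(\bD)$ one then obtains
\[DSF=P_{+}m_h R(zF)=P_{+}m_h m_{\bar z}RF=P_{+}m_{h\bar z}RF,\]
\[S^{*}DF=P_{+}m_{\bar z}P_{+}m_h RF=P_{+}m_{\bar z}m_h RF=P_{+}m_{h\bar z}RF,\]
so $DS=S^{*}D$.

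The main obstacle is $(a)\Rightarrow(c)$, which I would identify with Nehari's theorem. In the orthonormal basis $e_n(z):=z^n$, $n\geq 0$, of $H^{2}(\bD)$, the Rosenblum relation forces $\langle e_m,De_{n+1}\rangle=\langle Se_m,De_n\rangle=\langle e_{m+1},De_n\rangle$, so the matrix of $D$ is constant along antidiagonals: $\langle e_m,De_n\rangle=c_{m+n}$ for a single sequence $(c_k)_{k\geq 0}$. A direct Fourier expansion shows that for any $h\in L^{\infty}(\T)$ the matrix of $P_{+}m_h RP_{+}^{*}$ in the basis $\{e_n\}$ is $\hat h(m+n+1)$. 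Thus $(c)$ amounts to producing $h\in L^{\infty}(\T)$ with $\hat h(k+1)=c_k$ for all $k\geq 0$ and with $\|h\|_{\infty}$ controlled by $\|D\|$, which is precisely Nehari's theorem, applicable since $(c_{m+n})_{m,n\geq 0}$ defines a bounded operator on $\ell^{2}(\N_0)$ by hypothesis on $D$.
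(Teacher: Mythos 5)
Your proposal is correct and follows essentially the same route as the paper: (b)$\Rightarrow$(a) by specializing to $g(z)=z$, (a)$\Rightarrow$(b) by reducing to polynomials and invoking their weak density in $H^\infty(\bD)$ (Lemma~\ref{lem:polweakdense}(a)) together with the weak-to-WOT continuity of $g\mapsto m_g$ and $g\mapsto m_{g^\sharp}^*$ (which is exactly the paper's argument with the functionals $\eta_f$), and (a)$\Leftrightarrow$(c) via the Hankel-matrix identity and Nehari's theorem. The only cosmetic difference is that you write out the direct computation for (c)$\Rightarrow$(a) and the matrix of $P_+m_hRP_+^*$ explicitly, where the paper cites the literature.
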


\begin{prf}  (b) $\Rarrow$ (a) is trivial. 

\nin (a) $\Rarrow$ (b): 
We recall from Example~\ref{ex:4.1}(b) that the weak 
topology on the Banach algebra $H^\infty(\bD)$ 
is defined by the linear functionals 
\begin{equation}
  \label{eq:etafdisc}
 \eta_f(h) 
= \int_\T f(z)h^*(z)\, dz
\quad \mbox{ for } \quad 
f \in L^1(\T), h \in H^\infty(\bD). \end{equation}
For $f_1, f_2 \in H^2(\bD)$, 
we observe that 
\[ \la f_1, D g f_2 \ra 
= \la D^* f_1, g f_2 \ra
= \int_{\T} \oline{(D^* f_1)^*(z)} g^*(z) f_2^*(z)\, dz 
= \eta_{\oline{(D^* f_1)^*}f_2^*}(g) \] 
(cf.\ Example~\ref{ex:4.1}), 
and 
\begin{align*}
\la g^\sharp f_1, D f_2 \ra 
&= \int_{\T} \oline{g^\sharp f_1}^*(z) (Df_2)^*(z)\, dz 
= \int_{\T} \oline{f_1^*(z)}  g^*(\oline z) (Df_2)^*(z)\, dz \\
&= \int_{\T} (f_1^\sharp)^*(z)  g^*(z) (D f_2)^*(\oline z)\, dz 
= \eta_{(f_1^\sharp)^* (D f_2)^{*,\vee}}(g),
\end{align*}
where we use the notation $h^\vee(z) = h(z^{-1})$, $z \in \T$. 
Both define weakly continuous linear functionals on $H^\infty(\bD)$ 
because $L^2(\bT) L^2(\bT) = L^1(\T)$, which by (a) coincide on polynomials. 
As these span a weakly dense subspace (Lemma~\ref{lem:polweakdense}(a)),
 we obtain equality for every $g \in H^\infty(\bD)$, which is (b). 

\nin (a) $\Leftrightarrow$ (c): It is well known that 
(a) characterizes  bounded Hankel operators on $H^2(\bD)$ 
(cf.~\cite[Thm.~2.6]{Pe98}). This relation immediately implies 
\begin{equation}
  \label{eq:sequence}
  \la z^j, D z^k \ra 
= \la z^j, D S^k 1 \ra 
= \la z^j, (S^*)^k D 1 \ra 
= \la S^k z^j, D 1 \ra 
= \la z^{j+k}, D 1 \ra,
\end{equation}
so that the matrix of $D$ is a Hankel matrix \cite[Def. 6.1.1]{Ni02}. 
The converse requires Nehari's Theorem. 
We refer to \cite[Part B, 1.4.1]{Ni02} for a nice short 
functional analytic proof. 
\end{prf}

\begin{rem}
In the proof above we have used Nehari's Theorem 
(\cite[Thm.~2.1]{Pe98},  
\cite[Part B, 1.4.1]{Ni02}, \cite[Thm.~4.7.1]{Ni19}) 
which actually contains the finer information that 
every bounded Hankel operator on $H^2(\bD)$ is of the 
form $H_h$ (see \eqref{eq:defHh}), where $h \in L^\infty(\T)$ 
can even be chosen in such a way that 
\[ \|H_h\| = \|h\|_\infty. \] 
As $H_h = 0$ if and only if $h \in H^\infty(\bD_-)$ for 
$\bD_- = \{  z \in \C \: |z| > 1 \},$ 
bounded Hankel operators on $H^2(\bD)$ 
are parametrized by the quotient space 
$L^\infty(\T)/H^\infty(\bD_-)$ (cf.\ \cite[Cor.~3.4]{Pa88}). 
As 
\begin{equation}
  \label{eq:nehari}
\|H_h\| = \dist_{L^\infty(\T)}(h, H^\infty(\bD_-)),
\end{equation}
the embedding 
$L^\infty(\T)/H^\infty(\bD_-) \into B(H^2(\bD))$ is isometric. 
\end{rem}

We now recall how positive Hankel operators can be classified by 
using Hamburger's Theorem on moment sequences. 
\begin{defn} \mlabel{def:2.5} (The Carleson measure $\mu_H$) 
Suppose that $H$ is a positive Hankel operator. 
Then \eqref{eq:sequence} shows that the sequence $(a_n)_{n \in \N_0}$ defined by 
\begin{equation}
  \label{eq:an}
 a_n := \la S^n 1, H 1 \ra 
\end{equation}
satisfies 
\[ a_{n+m} = \la S^{n+m} 1, H 1 \ra = \la S^n 1, H S^m 1\ra,\]
so that the positivity of $H$ implies that the kernel 
$(a_{n+m})_{n,m \in \N_0}$ is positive definite, i.e., 
$(a_n)_{n \in \N}$ defines a bounded positive definite function on the involutive 
semigroup $(\N_0, +,\id)$ whose bounded spectrum is $[-1,1]$. 
By Hamburger's Theorem  (\cite[Thm.~6.2.2]{BCR84}, \cite[Chap. 6]{Ni02}), 
there exists a unique positive Borel measure $\mu_H$ on $[-1,1]$ with 
\[ \int_{-1}^1 x^n\, d\mu_H(x) = a_n \quad \mbox{ for } \quad n \in \N_0.\] 
Widom's Theorem (see Theorem~\ref{thm:widom-disc} in 
Appendix~\ref{app:a}) implies that 
\[ \la f, H g \ra_{H^2(\bD)} = \int_{-1}^1 \oline{f(x)}g(x)\, d\mu_H(x) 
\quad \mbox{ for }\quad f,g \in H^2(\bD)\] 
and it characterizes the measures on $[-1,1]$ which 
arise in this context. In particular, all these measures are finite and 
satisfy $\mu_H(\{1,-1\}) = 0$. 
We call $\mu_H$ the {\it Carleson measure of $H$}. 
\end{defn}

We shall return to positive Hankel operators on the disc $\bD$ 
in Theorem~\ref{thm:x.2}.

\section{Reflection positive one-parameter groups} 
\mlabel{sec:3}

In this section we proceed from the discrete to the 
continuous by studying reflection positive one-parameter groups 
instead of single reflection positive operators. 
In this context, the upper half plane $\C_+$ 
plays the same role as the unit disc does for the discrete context.

\begin{defn} A {\it reflection positive one-parameter group} 
is a quadruple $(\cE,\cE_+, \theta, U)$ defining a reflection 
positive strongly continuous representation of the symmetric semigroup 
$(\R,\R_+, -\id_\R)$. This means that 
$(U_t)_{t \in \R}$ is a unitary one-parameter group on 
$\cE$ 
 such that 
\begin{equation}
  \label{eq:repo-onepar}
U_t\cE_+ \subeq \cE_+\quad \mbox{ for } \quad t > 0 
\quad \mbox{ and } \quad \theta U_t \theta = U_{-t}\quad \mbox{ for } 
 \quad t \in \R.
\end{equation}
As in Definition~\ref{def:2.2.3}, 
 we call a reflection positive one-parameter group 
{\it regular} if 
\[ 
\bigcap_{t \in \R} U_t\cE_+ = \bigcap_{t > 0} U_t\cE_+
\ {\buildrel!\over =}\ \{0\} \quad \mbox{ and  }\quad 
\oline{\bigcup_{t \in \R} U_t\cE_+} 
= \oline{\bigcup_{t < 0} U_t\cE_+}\ {\buildrel !\over =}\ \cE.\] 
If this is the case, then the representation 
theorem of Lax--Philipps provides a 
unitary equivalence from $(\cE,\cE_+, U)$ to 
\[ (L^2(\R,\cK), L^2(\R_+,\cK), S),\] where 
$(S_t)_{t \in \R}$ are the unitary shift operators on 
$L^2(\R,\cK)$ and $\cK$ is a Hilbert space 
(the multiplicity space) (\cite[Thm.~4.4.1]{NO18}, 
\cite{LP64, LP67, LP81}). 
\end{defn}

To classify reflection positive one-parameter groups, 
we consider in this paper the 
{\it multiplicity free case}, where $\cK = \C$. 
Again, it is more convenient to work in the spectral 
representation, i.e., to use the Fourier transform and 
to consider on 
$\cE = L^2(\R)$ the unitary multiplication operators 
\[ (S_t f)(x) = e^{itx} f(x) \quad \mbox{ for } \quad x \in \R \] 
and the Hardy space $\cE_+ := H^2(\C_+)$ which is 
invariant under the semigroup $(S_t)_{t > 0}$.

\begin{rem} \mlabel{rem:3.2} 
(Representations of $(\R,\R_+)$) 
The closed invariant subspaces 
$\cE_+ \subeq H^2(\C_+)\subeq L^2(\R)$ under the semigroup 
$(S_t)_{t > 0}$ 
are of the form $h H^2(\C_+)$ for an inner function~$h$. 
This is Beurling's Theorem for the upper half plane. It follows 
from Beurling's Theorem for the disc (\cite[Thm.~6.4]{Pa88}) 
and Lemma~\ref{lem:polweakdense} by translation 
with $\Gamma_2$ (see Theorem~\ref{thm:3.5}). 

The involutions 
$\theta$ satisfying $\theta U_t \theta = U_{-t}$ for $t \in \R$ 
are of the form 
$\theta_h = h R$, where $(Rf)(x) = f(-x)$ and $h$ is a measurable 
unimodular function on $\R$ satisfying $h^\sharp = h$, where 
$h^\sharp(x) = \oline{h(-x)}$ as in \eqref{eq:1.12b}. 
\end{rem}

\subsection{Hankel operators on $H^2(\C_+)$} 

\begin{defn} \mlabel{def:3.3} 
For $h \in L^\infty(\R)$, we define on $H^2(\C_+)$ the {\it Hankel operator} 
\[ H_h := P_+ h R P_+^*, \quad \mbox{ where } \quad (Rf)(x) := f(-x), x \in \R, \] 
$P_+ \: L^2(\R) \to H^2(\C_+)$ is the orthogonal projection, and 
$hR$ stands for the composition of $R$ with multiplication 
by $h$ (cf.~\cite[p.~44]{Pa88}).  
\end{defn}

Let 
\begin{equation*}
j_\pm : H^\infty(\C_\pm) \to L^\infty\left(\R,\C\right), \quad f \mapsto f^*
\end{equation*}
denote the isometric embedding defined by the non-tangential boundary values. 
Accordingly, we identify \(H^\infty\left(\C_\pm\right)\) with 
its image under this map in \(L^\infty\left(\R,\C\right)\).

\begin{lem} \mlabel{lem:kernelLowerHalfPlane}
For $h \in L^\infty(\R)$, the following assertions hold: 
  \begin{itemize}
  \item[\rm(a)] $H_h^* = H_{h^\sharp}$. In particular $H_h$ is hermitian 
if $h^\sharp = h$. 
  \item[\rm(b)] $H_h = 0$ if and only if $h \in H^\infty(\C_-)$.
  \item[\rm(c)] $\|H_h\| \leq \|h\|$. 
  \end{itemize}
\end{lem}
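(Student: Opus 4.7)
The plan is to establish the three claims in order, using only elementary properties of the flip $R$, the projection $P_+$, and the decomposition $L^2(\R) = H^2(\C_+) \oplus H^2(\C_-)$, reserving the classical characterization of multipliers of the Hardy space for the only nontrivial step, part (b).

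For (a), I would observe that $R$ is a self-adjoint unitary on $L^2(\R)$ (substitute $y=-x$ in the defining integral), and that multiplication by $h \in L^\infty(\R)$ has adjoint multiplication by $\oline h$. Hence $(hR)^* = R \oline h$ as bounded operators on $L^2(\R)$. A one-line calculation then gives
\[
(R \oline h f)(x) = \oline{h(-x)}f(-x) = h^\sharp(x)f(-x) = (h^\sharp R f)(x),
\]
so $(hR)^* = h^\sharp R$, and taking adjoints in $H_h = P_+ hR P_+^*$ yields $H_h^* = P_+ h^\sharp R P_+^* = H_{h^\sharp}$.

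For (c), I would simply note that $\|P_+\| = \|P_+^*\| = 1$, $\|R\| = 1$, and multiplication by $h$ has norm $\|h\|_\infty$ on $L^2(\R)$, so submultiplicativity gives $\|H_h\| \le \|h\|_\infty$.

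The main work is in (b). First I would reformulate: since $R$ maps $H^2(\C_+)$ isometrically onto $H^2(\C_-)$ (the reflection $x \mapsto -x$ swaps upper and lower half planes at the level of boundary values), and since $H^2(\C_-) = H^2(\C_+)^\perp$ in $L^2(\R)$, the condition $H_h = 0$ is equivalent to
\[
\la P_+^* f_1, h R P_+^* f_2 \ra = 0 \quad \text{for all } f_1,f_2 \in H^2(\C_+),
\]
i.e.\ $h \cdot H^2(\C_-) \perp H^2(\C_+)$, which in turn means $h \cdot H^2(\C_-) \subseteq H^2(\C_-)$. The easy direction is that any $h \in H^\infty(\C_-)$ multiplies $H^2(\C_-)$ into itself (since the product of boundary values of functions holomorphic and of appropriate growth in $\C_-$ is again such a boundary value). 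The harder direction — that every $L^\infty(\R)$-multiplier of $H^2(\C_-)$ lies in $H^\infty(\C_-)$ — is the main obstacle; I would prove it by testing against the Cauchy kernels $g_z(w) = (w-z)^{-1}$ for $z \in \C_+$, which lie in $H^2(\C_-)$: then $h \cdot g_z \in H^2(\C_-)$ extends holomorphically to $F_z$ on $\C_-$, and setting $\tilde h(w) := (w-z)F_z(w)$ gives a candidate $H^\infty$-extension of $h$ whose independence of $z$ and boundedness by $\|h\|_\infty$ can be checked via the Cauchy integral representation. Alternatively, I would transport the whole question to the disc through the Cayley transform $\omega$ and invoke the corresponding, already-standard characterization $\{h \in L^\infty(\T): h\cdot H^2(\bD_-) \subseteq H^2(\bD_-)\} = H^\infty(\bD_-)$ recorded in Appendix~\ref{app:b}, which is the cleanest route given the framework the paper sets up.
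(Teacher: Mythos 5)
Your proposal is correct and follows essentially the same route as the paper: part (a) is the same adjoint computation (the paper writes it as an inner-product identity, you phrase it as $(hR)^* = h^\sharp R$, which is the same content), part (c) is identical, and for part (b) both arguments reduce $H_h=0$ to $h\,H^2(\C_-)\subeq H^2(\C_-)$ and then invoke the standard fact that the $L^\infty(\R)$-multipliers of $H^2(\C_-)$ are exactly $H^\infty(\C_-)$, which the paper simply cites (Partington, Cor.~4.8) and you additionally sketch. No gaps.
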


\begin{prf} (a) follows from the following relation for $f,g \in H^2(\C_+)$: 
  \begin{align*}
 \la f, H_h g \ra 
&= \int_{\R} \oline{f^*(x)} h(x) g^*(-x) \, dx 
= \int_{\R} h(-x) \oline{f^*(-x)} g^*(x) \, dx \\
&= \int_{\R} \oline{h^\sharp(x)f^*(-x)} g^*(x) \, dx 
= \la H_{h^\sharp} f, g \ra. 
  \end{align*}

\nin (b) (cf.~\cite[Cor. 4.8]{Pa88}) The operator $H_h$ vanishes if and only if 
\[ h H^2(\C_-) = \theta_h H^2(\C_+)  \subeq H^2(\C_+)^\bot = H^2(\C_-),\] which 
is equivalent to $h \in H^\infty(\C_-)$. 

\nin (c) follows from $\|P_+\| = \|R\|=1$. 
\end{prf}

The preceding lemma shows that we have a continuous linear map 
\[ L^\infty(\R)/H^\infty(\C_-) \to B(H^2(\C_+)), \quad 
[h] \mapsto H_h\] 
which is compatible with the involution $\sharp$ on the left and $*$ 
on the right. By Nehari's Theorem (\cite[Cor.~4.7]{Pa88}), this map is 
 isometric. 
As $H^\infty(\C_+) \cap H^\infty(\C_-) = \C \1$, we obtain in particular an 
embedding 
\[ H^\infty(\C_+)/\C \1 \into L^\infty(\R)/H^\infty(\C_-) \to B(H^2(\C_+)), \quad 
[h] \mapsto H_h.\] 

In Proposition~\ref{prop:1.6}, 
 we have used a positive Hankel 
operator $H$ to define a new scalar product 
that led us to a $*$-representation of $(S,\sharp)$. 
Here the key ingredient was the Hankel relation, 
an abstract form of the Rosenblum relation in Theorem~\ref{thm:hankel-disc}(b). 
As the following theorem shows, 
this relation actually characterizes Hankel operators on $H^2(\C_+)$, 
so that the classical definition (Definition~\ref{def:3.3}) and 
Definition~\ref{def:1.4} are consistent. 

\begin{thm}  \mlabel{thm:3.5}
{\rm(Characterization of Hankel Operators on the upper half plane)} 
Consider a bounded operator $C$, 
the isometries $S_t f = e_{it} f$, $t \geq 0$, 
and the multiplication operators $m_g$, $g \in H^\infty(\C_+)$ on 
$H^2(\C_+)$. We also consider the unitary isomorphism 
\begin{equation}
  \label{eq:3.5}
 \Gamma_2 \: L^2(\T) \to L^2(\R), \quad (\Gamma_2 f)(x) 
:= \frac{\sqrt{2}}{x + i} 
f\Big(\frac{x-i}{x+i}\Big)
\end{equation}
%= \frac{1}{\sqrt\pi (x + i)} 
%f(\omega^{-1}(x))\] 
from {\rm\cite[p.~200]{Ni19}} which maps 
$H^2(\bD)$ to $H^2(\C_+)$ and the operator 
\[ D := \Gamma_2^{-1} C \Gamma_2 \: H^2(\bD) \to H^2(\bD).\] 
Then the following are equivalent: 
\begin{itemize}
\item[\rm(a)] There exists $h \in L^\infty(\R)$ with $C = H_h$, i.e., 
$C$ is a Hankel operator  in the sense of {\rm Definition~\ref{def:3.3}}. 
\item[\rm(b)] $C m_g = m_{g^\sharp}^* C$ for all 
$g \in H^\infty(\C_+)$, where $g^\sharp(z) = \oline{g(-\oline z)}$, 
 i.e., $C$ is a $U_+$-Hankel
operator for the representation 
of the involutive algebra $(H^\infty(\C_+), \sharp)$ on 
$H^2(\C_+)$ by multiplication operators $U_+(g) = m_g$. 
\item[\rm(c)] $C S_t = S_t^* C$ for all $t > 0$, i.e., 
$C$ is a $U_+$-Hankel operator for the representation 
of $\R_+$ on $H^2(\C_+)$ defined by $U_+(t)f := e_{it} f$ for $t \geq 0$. 
\item[\rm(d)] $D$ is a Hankel operator on $H^2(\bD)$.
\end{itemize}
\end{thm}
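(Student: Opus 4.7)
My plan is to mirror the logical architecture of the already-proved Theorem~\ref{thm:hankel-disc}: show (a) $\Rightarrow$ (b) by a direct boundary computation, (b) $\Rightarrow$ (c) by specialization, (c) $\Rightarrow$ (b) by a weak-density argument, and close the proof by establishing (a) $\Leftrightarrow$ (d) through transport under the unitary $\Gamma_2$.

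For (a) $\Rightarrow$ (b) I would expand both $\la f_1, H_h m_g f_2\ra$ and $\la f_1, m_{g^\sharp}^* H_h f_2\ra$ on boundary values for $f_1, f_2 \in H^2(\C_+)$ and observe that, upon using $\oline{(g^\sharp)^*(x)} = g^*(-x)$, both reduce to
\[
\int_\R \oline{f_1^*(x)}\, h(x)\, g^*(-x)\, f_2^*(-x)\,dx,
\]
which is essentially the calculation already carried out in Lemma~\ref{lem:kernelLowerHalfPlane}. The step (b) $\Rightarrow$ (c) is immediate: for $t > 0$ one has $e_{it} \in H^\infty(\C_+)$ and a short computation from $f^\sharp(z) = \oline{f(-\oline z)}$ shows $e_{it}^\sharp = e_{it}$, while by definition $m_{e_{it}} = S_t$. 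For (c) $\Rightarrow$ (b) I would mirror the weak-density argument used in the proof of Theorem~\ref{thm:hankel-disc}(a) $\Rightarrow$ (b): both sides of $Cm_g = m_{g^\sharp}^* C$, evaluated on fixed $f_1, f_2$, are weakly continuous linear functionals of $g \in H^\infty(\C_+)$ (representable as $\eta_\phi$ for explicit $\phi \in L^1(\R)$); they agree on $\{e_{it}\}_{t > 0}$ by~(c); and this family has weakly dense linear span in $H^\infty(\C_+)$. The density is the Paley--Wiener picture: if $\phi \in L^1(\R)$ satisfies $\int \phi(x)\,e^{itx}\,dx = 0$ for all $t > 0$, then $\hat\phi$ is supported in $(-\infty,0]$, hence $\eta_\phi$ vanishes on all of $H^\infty(\C_+)$ (whose elements have spectra in $[0,\infty)$).

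The heart of the argument is the transport (a) $\Leftrightarrow$ (d). The routine pieces are $\Gamma_2^{-1} m_h \Gamma_2 = m_{h \circ \omega}$ (since $\Gamma_2$ differs from the isometric change of variable along $\omega$ only by a positive multiplier) and $\Gamma_2^{-1} P_+ \Gamma_2 = P_+$ under the identification $\Gamma_2 H^2(\bD) = H^2(\C_+)$. The critical computation concerns the reflection $(R f)(x) = f(-x)$: using the identity $\omega^{-1}(-x) = \oline{\omega^{-1}(x)}$ for $x \in \R$ together with the Jacobian factor $\sqrt{2}/(x+i)$ that appears in $\Gamma_2$, a direct manipulation yields
\[
\Gamma_2^{-1} R \Gamma_2 \;=\; -R^{\bD},
\qquad \text{where}\quad (R^{\bD} F)(z) := \oline z F(\oline z).
\]
Assembling, $\Gamma_2^{-1} H_h \Gamma_2 = -P_+ m_{h \circ \omega} R^{\bD} P_+^*$, which, up to the harmless global sign, is a classical Hankel operator on $H^2(\bD)$ in the sense of Theorem~\ref{thm:hankel-disc}(c); this proves (a) $\Rightarrow$ (d). Conversely, given (d), Theorem~\ref{thm:hankel-disc}(c) supplies an $L^\infty(\T)$-symbol $h_0$ for $D$, and running the transport backwards produces $C = H_h$ with $h := -h_0 \circ \omega^{-1} \in L^\infty(\R)$.

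The main technical obstacle I foresee is the careful bookkeeping of the Jacobian and the resulting sign in $\Gamma_2^{-1} R \Gamma_2 = -R^{\bD}$; once this intertwining relation is in hand, every other step is either essentially algebraic or reduces directly to the already-proved disc theorem.
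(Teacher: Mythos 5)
Your implications (a) $\Rightarrow$ (b), (b) $\Rightarrow$ (c) and (c) $\Rightarrow$ (b) coincide with the paper's arguments (the last resting, as in the paper, on the Density Lemma for $(e_{it})_{t>0}$), and your intertwining computation $\Gamma_2^{-1} R \Gamma_2 = -R^{\bD}$ is correct: using $\omega^{-1}(-x) = \oline{\omega^{-1}(x)}$ for $x \in \R$ and tracking the factor $\sqrt{2}/(x+i)$ does produce both the sign and the extra $\oline z$, so that $\Gamma_2^{-1} H_h \Gamma_2 = P_+\, m_{-(h\circ\omega)} R^{\bD} P_+^*$. (Incidentally, this gives the symbol relation $k = -(h\circ\omega)$, which differs from \eqref{eq:khrel} by the unimodular factor $\omega^{-1}(x)$; your version is the one consistent with the convention $R(F)(z) = \oline z F(\oline z)$ fixed in Theorem~\ref{thm:hankel-disc}(c).)

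However, the implications you establish --- (a) $\Rightarrow$ (b), (b) $\Leftrightarrow$ (c), (a) $\Leftrightarrow$ (d) --- do not close the cycle: nothing returns from (b) or (c) to (a) or (d), so you have not shown that an operator satisfying the intertwining relation must be of the form $H_h$. That missing arrow is precisely where the nontrivial content (Nehari's Theorem) lives, and your transport argument cannot supply it, because it only carries an \emph{already given} symbol representation back and forth between $\bD$ and $\C_+$; it never manufactures a symbol out of the algebraic relation (b). The paper closes the loop by proving (b) $\Leftrightarrow$ (d) instead: from $\Gamma_2 \circ m_{g\circ\omega} = m_g \circ \Gamma_2$ and $(g\circ\omega)^\sharp = g^\sharp\circ\omega$, condition (b) for $C$ is equivalent to $D m_{g\circ\omega} = m_{g^\sharp\circ\omega}^* D$ for all $g \in H^\infty(\C_+)$, which is condition (b) of Theorem~\ref{thm:hankel-disc} for $D$, hence equivalent to (d) by that theorem (whose equivalence (a) $\Leftrightarrow$ (c) is where Nehari enters). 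Adding exactly this step repairs your proof; together with your (a) $\Leftrightarrow$ (d) and (b) $\Leftrightarrow$ (c), all four conditions then become equivalent.
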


\begin{prf} (a) $\Rarrow$ (b): Suppose that 
$C = H_h$ for some $h \in L^\infty(\R)$. 
For $f_1, f_2 \in H^2(\C_+)$ we then have 
\[ \la f_1, H_h g f_2 \ra 
= \int_{\R} \oline{f_1^*(x)} h(x) g^*(-x) f_2^*(-x)\, dx 
= \int_{\R} \oline{g^{*,\sharp}(x) f_1^*(x)} h(x)  f_2^*(-x)\, dx 
= \la g^{\sharp} f_1, H_h f_2 \ra,\] 
which is (b). 

\nin (b) $\Rarrow$ (c) follows from $e_{it}^\sharp = e_{it}$ for $t > 0$. 

\nin (c) $\Rarrow$ (b): For $f_1, f_2 \in H^2(\C_+)$ and 
$g \in H^\infty(\C_+)$, 
we observe that 
\[ \la f_1, C g f_2 \ra 
= \la C^* f_1, g f_2 \ra
= \int_{\R} \oline{C^* f_1}^*(x) g^*(x) f_2^*(x)\, dx 
= \eta_{\oline{C^* f_1}f_2}(g) \] 
(see Example~\ref{ex:4.1} for the functionals $\eta_f$) 
and 
\begin{align*}
\la g^\sharp f_1, C f_2 \ra 
&= \int_{\R} \oline{(g^\sharp f_1)^*}(x) (Cf_2)^*(x)\, dx 
= \int_{\R} \oline{f_1^*(x)}  g^*(-x) (Cf_2)^*(x)\, dx \\
&= \int_{\R} (f_1^\sharp)^*(x)  g^*(x) (C f_2)^*(-x)\, dx 
= \eta_{f_1^\sharp (C f_2)^\vee}(g),
\end{align*}
where we use the notation $h^\vee(x) := h(-x)$ for $x \in \R$. 
Both define weakly continuous linear functionals on $H^\infty(\C_+)$, 
which 
by (c) coincide on the functions $e_{it}$, $t > 0$. 
As these span a weakly dense subspace (Lemma~\ref{lem:polweakdense}(b)), 
we obtain 
equality for every $g \in H^\infty(\C_+)$, which is (b). 

\nin (b) $\Leftrightarrow$ (d): The Cayley transform 
$\omega \: \bD \to \C_+,  \omega(z) := i \frac{1 + z}{1-z}$ defines 
an isometric isomorphism 
$L^\infty(\T) \to L^\infty(\R), g \mapsto g \circ \omega^{-1}$ 
which restricts to an isomorphism 
$H^\infty(\bD) \to H^\infty(\C_+)$ 
and satisfies 
\[ \Gamma_2 \circ m_g = m_{g \circ \omega^{-1}} \circ \Gamma_2.\] 
Therefore (b) is equivalent to 
\[ D m_{g \circ \omega} = m_{g^\sharp \circ \omega}^* D 
\quad \mbox{ for } \quad g \in H^\infty(\C_+),\] 
which is (d) by Theorem~\ref{thm:hankel-disc}. 

\nin (d) $\Rarrow$ (a): Suppose that $D = D_k$ as in 
Theorem~\ref{thm:hankel-disc}. 
For $f \in H^2(\bD)$, we then have for $x \in \R$ 
\begin{align*}
(C \Gamma_2(f))(x) 
&= \Gamma_2(Df)(x) 
= \frac{\sqrt{2}}{x + i} (Df)^*(\omega^{-1}(x)) 
= \frac{\sqrt{2} k(\omega^{-1}(x))}{x + i} f^*(\oline{\omega^{-1}(x)}) \\
&= k(\omega^{-1}(x))\frac{i-x}{(i+x)}
\frac{\sqrt{2}}{(-x+i)}f^*(\omega^{-1}(-x))
= k(\omega^{-1}(x))\frac{i-x}{i+x} \Gamma_2(f)^*(-x). 
\end{align*}
The assertion now follows with 
\begin{equation}
  \label{eq:khrel}
h(x) := k(\omega^{-1}(x))\frac{i-x}{i+x} = -k(\omega^{-1}(x))\omega^{-1}(x) 
\end{equation}
(cf.\ \cite[Thm.~4.6]{Pa88}). 
\end{prf}

\subsection{Widom's Theorem for the upper half-plane} 

In this subsection we translate Widom's Theorem 
(Theorem~\ref{thm:widom-disc})
characterizing the Carleson measures of positive Hankel operators 
on the disc to a corresponding result on the upper half plane. 
This is easily achieved by using Theorem~\ref{thm:3.5} for the translation 
process.

Let $H$ be a positive Hankel operator on $H^2(\C_+)$. 
For $t \geq 0$, the exponential functions 
$e_{it}(z) = e^{itz}$ in $H^\infty(\C_+)$ satisfy 
$e_{it}^\sharp = e_{it}$. Therefore the function 
\begin{equation}
  \label{eq:anb}
\phi_H \: \R_+ \to \R,\quad \phi_H(t) := \la e_{it/2}, H e_{it/2} \ra_{H^2(\C_+)} 
\end{equation} 
satisfies 
\[ \phi_H(t+s) 
= \la e_{i(t+s)/2}, H e_{i(t+s)/2} \ra_{H^2(\C_+)}  
= \la e_{it}, H e_{is} \ra_{H^2(\C_+)}  \quad \mbox{ for } \quad s,t> 0,\]
so that  the kernel $(\phi_H(t+s))_{t,s > 0}$ 
is positive definite. This means that $\phi_H$ is a 
positive defi\-nite function on the involutive 
semigroup $(\R_+, +,\id)$ bounded on $[1,\infty)$. 
By the Hausdorff--Bernstein--Widder Theorem (\cite[Thm.~6.5.12]{BCR84}, 
\cite[Thm.~VI.2.10]{Ne99}), 
there exists a unique positive Borel measure $\mu_H$ on $[0,\infty)$  with 
\begin{equation}
  \label{eq:3.2}
\phi_H(t) = \int_0^\infty e^{-\lambda t} \, d\mu_H(\lambda)
 \quad \mbox{ for } \quad t > 0.
\end{equation}
Widom's Theorem for $\C_+$ (Theorem~\ref{thm:widom-hp} below) 
now implies that 
\[ \la f, H g \ra_{H^2(\C_+)} = \int_0^\infty \oline{f(i\lambda)}g(i\lambda)
\, d\mu_H(\lambda) 
\quad \mbox{ for }\quad f,g \in H^2(\C_+)\] 
and it characterizes the measures $\mu_H$ on $[0,\infty)$ which 
correspond to positive bounded Hankel operators.
In particular, all these measures satisfy $\mu_H(\{0\}) = 0$. 

\begin{defn} The measure $\mu_H$ on $\R_+$ is called the Carleson 
measure of $H$. 
\end{defn}

\begin{thm} \mlabel{thm:widom-hp} {\rm(Widom's Theorem for 
the upper half-plane)} 
For a positive Borel measure $\mu$ on~$\R_+$, 
we consider the measure $\rho$ on $\R_+$ defined by 
\[  d\rho(\lambda) := \frac{d\mu(\lambda)}{1 + \lambda^2}.\] 
Then  the following are equivalent: 
\begin{itemize}
\item[\rm(a)] There exists an $\alpha \in \R$ with 
  \begin{equation}
    \label{eq:alphaesti}
 \int_{\R_+} |f(i\lambda)|^2\, d\mu(\lambda) \leq \alpha \|f\|^2 
\quad \mbox{ for } \quad f \in H^2(\C_+),
  \end{equation}
i.e., $\mu$ is the Carleson measure of a positive Hankel operator 
on $H^2(\C_+)$. 
\item[\rm(b)] $\rho((0,x)) = O(x)$ and $\rho((x^{-1},\infty)) = O(x)$ 
for $x \to 0+$. 
\end{itemize}
If these conditions are satisfied, then 
$\rho(\R_+) < \infty$ and there exist $\beta, \gamma > 0$ such that 
\[ \rho((0,\eps])  \leq \beta \eps \quad \mbox{and} 
\quad  \rho([t,\infty)) \leq \frac \gamma t
\quad \mbox { for every } \quad  \eps,t \in \R_+.\]
\end{thm}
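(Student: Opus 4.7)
The plan is to reduce Theorem~\ref{thm:widom-hp} to its disc counterpart Theorem~\ref{thm:widom-disc} via the unitary isomorphism $\Gamma_2 \: H^2(\bD) \to H^2(\C_+)$ from \eqref{eq:3.5}. By Theorem~\ref{thm:3.5}(d), the conjugation $C \mapsto D := \Gamma_2^{-1} C \Gamma_2$ is a bijection between bounded Hankel operators on the two spaces, and since $\Gamma_2$ is unitary it preserves positivity. Hence positive Hankel operators on $H^2(\C_+)$ correspond bijectively to positive Hankel operators on $H^2(\bD)$, and the latter are classified by their Carleson measures on $(-1,1)$ through Theorem~\ref{thm:widom-disc}.

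The key computation is to rewrite the Carleson estimate \eqref{eq:alphaesti} on the half-plane side as a Carleson estimate on the disc side. The Cayley transform induces an order-preserving bijection $T \: \R_+ \to (-1,1)$, $T(\lambda) := \omega^{-1}(i\lambda) = (\lambda-1)/(\lambda+1)$, and the explicit formula for $\Gamma_2$ yields
\[
|(\Gamma_2 f)(i\lambda)|^2 \;=\; \frac{2}{(1+\lambda)^2}\, |f(T(\lambda))|^2
\quad \mbox{ for } \quad f \in H^2(\bD),\ \lambda > 0.
\]
Since $\|\Gamma_2 f\|_{H^2(\C_+)} = \|f\|_{H^2(\bD)}$, the estimate \eqref{eq:alphaesti} for $\mu$ on $H^2(\C_+)$ with constant $\alpha$ is equivalent, with the same constant, to the analogous disc estimate for the pushforward
\[
\nu \;:=\; T_*\Bigl(\tfrac{2}{(1+\lambda)^2}\,d\mu(\lambda)\Bigr)
\quad \mbox{ on } \quad (-1,1).
\]
Thus (a) is equivalent to $\nu$ being the Carleson measure of a positive Hankel operator on $H^2(\bD)$, which, by Theorem~\ref{thm:widom-disc}, is equivalent to linear-type boundary bounds of the form $\nu([1-\delta,1)) = O(\delta)$ and $\nu((-1,-1+\delta]) = O(\delta)$ as $\delta \to 0^+$.

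Finally, I would translate these endpoint conditions back through $T$. Direct computation gives $T^{-1}((1-\delta,1)) = ((2-\delta)/\delta,\infty)$ and $T^{-1}((-1,-1+\delta)) = (0,\delta/(2-\delta))$; combined with the uniform comparability $1 \le (1+\lambda)^2/(1+\lambda^2) \le 2$ on $\R_+$, this turns the two disc boundary conditions into $\rho((x^{-1},\infty)) = O(x)$ and $\rho((0,x)) = O(x)$ as $x \to 0^+$, together with the pointwise bounds $\rho((0,\eps]) \le \beta \eps$ and $\rho([t,\infty)) \le \gamma/t$ inherited from the quantitative disc bounds. Total finiteness $\rho(\R_+) < \infty$ then follows from these two tail estimates and the fact that $\nu$ is a finite Borel measure on $(-1,1)$, since $1+\lambda^2$ stays bounded above and below by positive constants on any compact subinterval of $\R_+$. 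The main technical obstacle is careful bookkeeping of the change of variables and the interplay of the weights $(1+\lambda)^2$ and $1+\lambda^2$; once this is set up, the theorem becomes a direct transport of Theorem~\ref{thm:widom-disc} under the Cayley correspondence.
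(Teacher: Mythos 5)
Your proposal is correct and follows essentially the same route as the paper: both arguments transport the Carleson estimate through $\Gamma_2$ and the Cayley map $\lambda \mapsto (\lambda-1)/(\lambda+1)$, identify the disc measure as the pushforward of $\tfrac{2}{(1+\lambda)^2}\,d\mu(\lambda)$, and match the endpoint conditions of Theorem~\ref{thm:widom-disc} with the two tail conditions on $\rho$ using the comparability of $(1+\lambda)^2$ and $1+\lambda^2$. The only place the paper is more explicit is in upgrading the asymptotic bounds of (b) to the global estimates $\rho((0,\eps])\leq\beta\eps$ and $\rho([t,\infty))\leq\gamma/t$ for \emph{all} $\eps,t$, which it does by first establishing $\rho(\R_+)<\infty$ and then treating the remaining range of $\eps$ and $t$ by hand; your sketch gestures at this but in the reverse order.
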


\begin{prf} Condition (a) is equivalent to the existence 
of a positive Hankel operator $C$ on $H^2(\C_+)$ 
with $\mu = \mu_C$. Let $D$ be the corresponding 
Hankel operator on $H^2(\bD)$ (Theorem~\ref{thm:3.5}) 
and consider the diffeomorphism 
\[ \gamma \: \R_+ \to (-1,1), \quad 
\gamma(\lambda) = \frac{\lambda -1}{\lambda +1} = \omega^{-1}(i\lambda).\] 
For $f \in H^2(\bD)$, we then have 
\begin{align*}
& \int_{-1}^1 |f(t)|^2\, d\mu_D(t) 
= \la f, D f \ra_{H^2(\bD)} 
= \la \Gamma_2(f), C \Gamma_2(f) \ra_{H^2(\C_+)} \\
&= \int_{\R_+} |\Gamma_2(f)(i\lambda)|^2\, d\mu_C(\lambda) 
= 2 \int_{\R_+} \frac{|f(\omega^{-1}(i\lambda))|^2}
{(1+\lambda)^2}\, d\mu_C(\lambda) \\
&= 2 \int_{\R_+} \frac{|f(\gamma(\lambda))|^2} 
{(1+\lambda)^2}\, d\mu_C(\lambda) 
= 2 \int_{-1}^1 \frac{|f(t)|^2} 
{(1+\gamma^{-1}(t))^2}\, d(\gamma_*\mu_C)(t). 
\end{align*}
As $\gamma^{-1}(t) = -i \omega(t) = \frac{1+t}{1-t}$ and 
$1 + \frac{(1+t)}{(1-t)} = \frac{2}{(1-t)},$ 
it follows that 
\[ d\mu_D(t) = \frac{(1-t)^2}{2} d(\gamma_*\mu_C)(t).\]
We conclude that 
\begin{align*}
 \mu_D((1-x,1)) 
&= \int_{1-x}^1 \frac{(1-t)^2}{2} d(\gamma_*\mu_C)(t) 
= \int_{\gamma^{-1}(1-x)}^\infty \frac{(1-\gamma(\lambda))^2}{2} 
d\mu_C(\lambda)\\
&= \int_{\frac{2}{x}-1}^\infty \frac{2}{(\lambda + 1)^2} 
d\mu_C(\lambda) 
= 2 \int_{\frac{2}{x}-1}^\infty \frac{1 + \lambda^2}{(\lambda + 1)^2} 
d\rho(\lambda).
\end{align*}
Therefore $\mu_D((1-x,1))$ has for $x \to 0^+$ the same asymptotics  
as $\rho((x^{-1},\infty))$. 
Likewise 
\begin{align*}
 \mu_D((-1,-1+x)) 
 &= \int_{-1}^{-1+x} \frac{(1-t)^2}{2} d(\gamma_*\mu_C)(t) 
= \int_0^{\gamma^{-1}(x-1)} \frac{(1-\gamma(\lambda))^2}{2} 
d\mu_C(\lambda)\\
&= \int_0^{\frac{x}{2-x}} \frac{2}{(\lambda + 1)^2} 
d\mu_C(\lambda) = 
 2\int_0^{\frac{x}{2-x}} \frac{1 + \lambda^2}{(\lambda + 1)^2} 
d\rho(\lambda).
\end{align*}
This shows that $\mu_D((-1,-1+x))$ has for $x \to 0+$ the same asymptotics 
as $\rho((0,x))$. 
Therefore the assertion follows from Widom's Theorem for the disc 
(Theorem~\ref{thm:widom-disc}).

Now we assume that $\rho$ satisfies (b). 
Then there exist $\beta', \gamma' > 0$ and 
\(\eps_0,t_0 \in \R_+\) such that
\[\frac{\rho\left(\left(0,\eps\right]\right)}\eps \leq \beta' \quad \text{and} \quad \rho\left(\left[t,\infty\right)\right)t \leq \gamma'
\quad \mbox{ for every  } \quad \eps \leq \eps_0, \ 
t \geq t_0.\] 
Then 
\[ \rho(\R_+) = \rho\left(\left(0,\eps_0\right)\right) + \rho\left(\left[\eps_0,t_0\right]\right) + \rho\left(\left(t_0,\infty\right)\right)
\leq \beta' + \rho\left(\left[\eps_0,t_0\right]\right) + \gamma' 
 < \infty.\] 
For \(\eps>\eps_0\) and \(t<t_0\), we now find 
\[\frac{\rho\left(\left(0,\eps\right]\right)}\eps \leq \frac{\rho\left(\R_+\right)}{\eps_0} \quad \text{and} \quad \rho\left(\left[t,\infty\right)\right)t \leq \rho\left(\R_+\right)t_0.\]
This completes the proof. 
\end{prf}

\subsection{The symbol kernel of a positive Hankel operator} 

\begin{definition} 
Let $H$ be a Hankel operator on $H^2(\C_+)$ 
and 
\[  Q(z,w) = Q_w(z) = \frac{1}{2\pi} \frac{i}{z - \oline w}  \] 
be the Szeg\"o kernel of $\C_+$ 
(cf.\ Appendix~\ref{app:k}). Then we 
associate to $H$ its {\it symbol kernel}, i.e., the kernel 
\begin{equation}
  \label{eq:eq:KHdef}
Q_H(z,w) :=   \braket{Q_z}{HQ_w} 
=  (HQ_w)(z) =  \oline{(H^* Q_z)(w)}. 
\end{equation}
Clearly, $Q_H$ is holomorphic in the first argument and 
antiholomorphic in the second argument. 
\end{definition}

By \cite[Lemma~I.2.4]{Ne99}, 
the Hankel operator $H$ is positive if and only if its symbol kernel 
$Q_H$ is positive definite. 
Suppose that this is the case and let $\mu_H$ be the corresponding 
Carleson measure on $\R_+$. 
Then 
\begin{align} \label{eq:KHdef} 
Q_H(z,w) &=  \int_0^\infty \oline{Q_z(i\lambda)} 
Q_w(i\lambda)\, d\mu_H(\lambda)
=  \frac{1}{4\pi^2}\int_0^\infty \frac{d\mu_H(\lambda)}
{(-i\lambda - z)(i\lambda - \oline w)}\notag \\
&=  \frac{1}{4\pi^2}\int_0^\infty \frac{d\mu_H(\lambda)}{(\lambda - iz)(\lambda + i \oline w)}.\end{align}

\begin{defn}
From Widom's Theorem for the upper half plane 
(Theorem~\ref{thm:widom-hp}), we know that the measure 
$\frac{d\mu(\lambda)}{1 + \lambda^2}$ is finite, so that, 
\begin{equation} \label{eq:kappa}
\kappa\left(z\right) := 
\int_{\R_+} \frac{\lambda}{1+\lambda^2}-\frac{1}{z+\lambda} 
\,d\mu_H\left(\lambda\right)
\end{equation}
defines a holomorphic function on $\C \setminus (-\infty,0]$ 
(\cite[Ch.~II, Thm.~1]{Do74}). 
%Theorem~\ref{thm:herglotz}). 
%As $k_H(1) \in \R$, we choose
%$C$ in such a way that $k_H(1) = 0$ and write $C_H$ for the corresponding 
%constant. 
\end{defn}

For $z,w \in \C_r$, we then have 
\[ \kappa(z) - \kappa(w) 
= \int_{\R_+} \frac{1}{w + \lambda} - \frac{1}{z + \lambda}\, d\mu_H(\lambda)
= \int_{\R_+} \frac{z-w}{(w + \lambda)(z + \lambda)}\, d\mu_H(\lambda),\] 
so that 
\begin{equation}
  \label{eq:kerl-rel} 
  \frac{\kappa(z) - \kappa(w)}{z-w} 
=  \int_{\R_+} \frac{d\mu_H(\lambda)}{(w + \lambda)(z + \lambda)}
= 4\pi^2 Q_H(iz, i \oline w). 
\end{equation}

\section{Schober's representation theorem} 
\mlabel{sec:4}

In this section we explain how to find for every positive Hankel 
operator $H$ on $H^2(\C_+)$ an explicit bounded function 
$h_H \in L^\infty(\R)$ with values in $i \R$ such  that 
$h_H^\sharp = h_H$ and $H$ is the corresponding Hankel operator, i.e., $H_{h_H}= H$. This supplements Nehari's classical theorem by a constructive component. 
Adding non-zero real constants then leads to functions 
$f$ in the unit group of $L^\infty(\R)$ with $H_f = H$, and 
we shall use this to shows that all Hankel positive one-parameter groups 
are actually reflection positive for a slightly modified scalar product. 

\subsection{An operator symbol for $H$} 
\mlabel{subsec:4.1}

\begin{theorem}\label{thm:CarlesonRepresentant}
Let \(H\) be a positive Hankel operator on \(H^2(\C_+)\) with 
Carleson measure $\mu_H$ and define
\[ h_H \: \R \to i \R, \quad h_H\left(p\right) :=
 \frac i\pi \cdot \int_{\R_+} \frac {p}{\lambda^2+p^2} \,d\mu_H\left(\lambda\right).\]
Then \(h_H \in L^\infty\left(\R,\C\right)\) and 
the associated Hankel operator $H_{h_H}$ equals~$H$. 
%, for a function \(h \in L^\infty(\R,\C)\), we have \(H_h = H\), if and only i%f \({h \in h_H + H^\infty\left(\C_-\right)}\).
\end{theorem}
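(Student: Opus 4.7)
The plan is to establish two claims: (1) $h_H \in L^\infty(\R,\C)$, and (2) the associated Hankel operator $H_{h_H}$ coincides with~$H$. Throughout I write $d\mu_H=(1+\lambda^2)\,d\rho(\lambda)$ with $\rho$ finite by Theorem~\ref{thm:widom-hp}. Note that the identity $\frac{p}{\lambda^2+p^2}=\frac{i/2}{\lambda+ip}-\frac{i/2}{\lambda-ip}$ via partial fractions yields the useful representation
\[ h_H(p) = \frac{1}{2\pi}\int_{\R_+}\Bigl(\frac{1}{\lambda-ip}-\frac{1}{\lambda+ip}\Bigr)\,d\mu_H(\lambda), \]
and also shows $h_H(p)=\frac{i}{\pi}\Im\kappa(ip)$ so that $h_H$ is purely imaginary, odd, and consequently satisfies $h_H^\sharp=h_H$.

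\textbf{Boundedness.} For $p>0$ I split
\[ \pi|h_H(p)| \;=\; p\int_{\R_+}\frac{d\rho(\lambda)}{\lambda^2+p^2} \;+\; p\int_{\R_+}\frac{\lambda^2\,d\rho(\lambda)}{\lambda^2+p^2}, \]
and treat each integral over $(0,p]$ and $(p,\infty)$ separately using the Widom bounds
$\rho((0,\varepsilon])\le\beta\varepsilon$ and $\rho([t,\infty))\le\gamma/t$. On $(0,p]$ the first integrand is bounded by $1/p^2$, contributing $\le\beta$; on $(p,\infty)$, a layer-cake computation with $\rho((p,1/\sqrt s))\le\beta/\sqrt s$ gives $\int_p^\infty d\rho/\lambda^2\le 2\beta/p$, contributing $\le 2\beta$. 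For the second integral, on $(0,p]$ a layer-cake argument with $\rho((\sqrt s,\infty))\le\gamma/\sqrt s$ gives $\int_0^p\lambda^2\,d\rho\le 2\gamma p$, contributing $\le 2\gamma$; on $(p,\infty)$ using $\lambda^2/(\lambda^2+p^2)\le 1$ and $\rho((p,\infty))\le\gamma/p$ contributes $\le\gamma$. Altogether $\pi|h_H(p)|\le 3\beta+3\gamma$, uniformly in $p\in\R\setminus\{0\}$.

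\textbf{Identification.} It suffices to verify that $H_{h_H}$ and $H$ have the same symbol kernel, since the Szeg\"o kernels $\{Q_w:w\in\C_+\}$ have dense linear span in $H^2(\C_+)$. Starting from Definition~\ref{def:3.3} and using $(RQ_w)(x)=\frac{-i}{2\pi(x+\overline w)}$ together with the reproducing formula, I obtain
\[ (H_{h_H}Q_w)(z) \;=\; -\frac{1}{4\pi^2}\int_{\R}\frac{h_H(x)}{(x-z)(x+\overline w)}\,dx. \]
Substituting the partial fraction representation of $h_H$ above and applying Fubini reduces the claim to evaluating, for each $\lambda>0$, the contour integrals
\[ I_{\pm}(\lambda) \;:=\; \int_{\R}\frac{dx}{(x-z)(x+\overline w)(\lambda\mp ix)}. \]
The integrand for $I_-$ is analytic on the lower half-plane (its three poles $z$, $-\overline w$, $i\lambda$ all lie in $\C_+$), so closing downwards gives $I_-(\lambda)=0$. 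For $I_+$, the pole $-i\lambda$ lies in the lower half-plane while $z$ and $-\overline w$ lie in $\C_+$; closing in the upper half-plane and simplifying the two residues (with $(z-(-\overline w))^{-1}$ factored out) yields $I_+(\lambda)=\frac{-2\pi}{(\lambda-iz)(\lambda+i\overline w)}$. Substituting back recovers exactly
\[ (H_{h_H}Q_w)(z) \;=\; \frac{1}{4\pi^2}\int_{\R_+}\frac{d\mu_H(\lambda)}{(\lambda-iz)(\lambda+i\overline w)} \;=\; Q_H(z,w) \;=\; (HQ_w)(z), \]
which concludes the proof.

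\textbf{Main obstacle.} The subtle step is (1): a direct decomposition such as $(1+\lambda^2)/(\lambda^2+p^2)=1+(1-p^2)/(\lambda^2+p^2)$ produces individually unbounded terms whose cancellation reproduces the bound but is hard to control, so one must instead exploit the Widom asymptotics on $\rho$ in the split-at-$p$ manner above. The residue computation in (2) is routine but requires careful bookkeeping of which poles lie above or below the real axis.
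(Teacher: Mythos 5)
Your proof is correct, and the identification step takes a genuinely different route from the paper's. For boundedness, the paper integrates $f_p(\lambda)=\frac{p(1+\lambda^2)}{\lambda^2+p^2}$ against $d\rho$ using the monotonicity of $f_p$ in the regimes $p\geq 1$ and $p<1$ together with integration by parts (Lemma~\ref{lem:intpart}); you instead split at $\lambda=p$ and use layer-cake estimates. Both rest on exactly the same Widom bounds $\rho((0,\eps])\leq\beta\eps$, $\rho([t,\infty))\leq\gamma/t$, and your version is somewhat more streamlined. The real divergence is in Part 2. The paper works with $\kappa(ip)$ under the integral sign, which forces it to control $\Re(\kappa(ip))$: this requires constructing the auxiliary function $n_p\in H^2(\C_+)$, invoking the Carleson embedding constant $\alpha$, and deriving the bound $|\Re(\kappa(ip))|\leq 8\alpha|\log|p||$ just to know that $\int_\R\frac{\kappa(ip)}{(p-z)(p-w)}\,dp$ converges; it then kills one integral by holomorphy and evaluates the other by residues. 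You bypass $\Re\kappa$ entirely by writing $h_H(x)=\frac{1}{2\pi}\int_{\R_+}\bigl(\frac{1}{\lambda-ix}-\frac{1}{\lambda+ix}\bigr)\,d\mu_H(\lambda)$, interchanging the order of integration, and computing the residues per fixed $\lambda$; the two approaches meet at the same formula $\frac{1}{4\pi^2}\int_{\R_+}\frac{d\mu_H(\lambda)}{(\lambda-iz)(\lambda+i\overline w)}=Q_H(z,w)$ from \eqref{eq:KHdef}. This is simpler and entirely self-contained, at the cost of losing the logarithmic growth estimate on $\Re\kappa$, which has independent interest. One point you should make explicit: the Fubini hypothesis holds because the inner absolute integral is exactly $\int_{\R_+}\frac{2|x|}{\lambda^2+x^2}\,d\mu_H(\lambda)=2\pi|h_H(x)|\leq 2\pi\|h_H\|_\infty$ by your Part 1, and $\int_\R\frac{dx}{|x-z|\,|x+\overline w|}<\infty$; note that the splitting into $I_+(\lambda)$ and $I_-(\lambda)$ is only legitimate \emph{after} the interchange, for fixed $\lambda$, since $\int_{\R_+}\frac{d\mu_H(\lambda)}{|\lambda\mp ix|}$ may diverge (this is precisely why the paper's $\kappa$ carries the regularizing term $\frac{\lambda}{1+\lambda^2}$). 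Your ordering of the argument respects this, so the proof stands.
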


\begin{proof} {\bf Part 1:} 
We first show that \(h_H\) is bounded. 
Let $d\rho\left(\lambda\right) = \frac{d\mu_H\left(\lambda\right)}{1+\lambda^2}$ 
be the finite measure on \(\R_+\) from Theorem~\ref{thm:widom-hp}. 
Then we have
\[\int_{\R_+} \frac {p}{\lambda^2+p^2} \,d\mu_H\left(\lambda\right) = \int_{\R_+} \frac {p\left(1+\lambda^2\right)}{\lambda^2+p^2} \,d\rho\left(\lambda\right).\]
For the integrand 
\[ f_p\left(\lambda\right) :=  \frac {p\left(1+\lambda^2\right)}{\lambda^2+p^2}
\quad \mbox{ we have } \quad 
f_p'\left(\lambda\right) = \frac {2p\left(p^2-1\right)\lambda}{\left(\lambda^2+p^2\right)^2}.\]
Hence the function $f_p$ is increasing for $p \geq 1$, and therefore
\[\int_{\left(0,1\right]} f_p\left(\lambda\right)d\rho\left(\lambda\right) \leq f_p\left(1\right) \int_{\left(0,1\right]}d\rho\left(\lambda\right) = \frac{2p}{1+p^2} \cdot \rho\left(\left(0,1\right]\right) \leq \rho\left(\left(0,1\right]\right).\]
Now, let \(\gamma\) be the constant from 
Theorem~\ref{thm:widom-hp}. Then 
integration by parts (cf.~Lemma~\ref{lem:intpart}) leads for 
\(p \geq 1\) to 
\begin{align*}
\int_{\left(1,\infty\right)} f_p\left(\lambda\right)d\rho\left(\lambda\right) &= \rho\left(\left(1,\infty\right)\right) f_p\left(1\right) + \int_{\left(1,\infty\right)} \rho\left(\left(t,\infty\right)\right) f_p'\left(t\right)\,dt
\\&\leq \rho\left(\left(1,\infty\right)\right) \frac{2p}{1+p^2} + \int_{\left(1,\infty\right)} \frac \gamma t \cdot \frac {2p\left(p^2-1\right)t}{\left(t^2+p^2\right)^2}\,dt
\\&\leq \rho\left(\left(1,\infty\right)\right) \cdot 1 + \gamma \left(p^2-1\right) \left[\frac{\frac{tp}{t^2+p^2}+\arctan\left(\frac tp\right)}{p^2}\right]_1^\infty
\\&= \rho\left(\left(1,\infty\right)\right) + \gamma \,\frac{p^2-1}{p^2} \left[\frac \pi 2 - \frac{p}{1+p^2} - \arctan\left(\frac 1p\right)\right] \leq \rho\left(\left(1,\infty\right)\right) + \frac {\gamma \pi}2.
\end{align*}
So, for every \(p \geq 1\), we have
\begin{align*}
\int_{\R_+} \frac {p}{\lambda^2+p^2} \,d\mu_H\left(\lambda\right) &= \int_{\R_+} f_p\left(\lambda\right) \,d\rho\left(\lambda\right) = \int_{\left(0,1\right]} f_p\left(\lambda\right)d\rho\left(\lambda\right) + \int_{\left(1,\infty\right)} f_p\left(\lambda\right)d\rho\left(\lambda\right)
\\& \leq \rho\left(\left(0,1\right]\right) + \rho\left(\left(1,\infty\right)\right) + \frac {\gamma \pi}2 = \rho\left(\R_+\right) + \frac {\gamma \pi}2.
\end{align*}
For \(p \in \left(0,1\right)\), the function \(f_p\) is decreasing and therefore
\[\int_{\left(1,\infty\right)} f_p\left(\lambda\right)d\rho\left(\lambda\right) \leq f_p\left(1\right) \int_{\left(1,\infty\right)}d\rho\left(\lambda\right) = \frac{2p}{1+p^2} \cdot \rho\left(\left(1,\infty\right)\right) \leq \rho\left(\left(1,\infty\right)\right).\]
Now, let \(\beta\) be the constant from 
Theorem~\ref{thm:widom-hp}. Then, for \(p < 1\), we have
\begin{align*}
\int_{\left(0,1\right]} f_p\left(\lambda\right)d\rho\left(\lambda\right) &= \rho\left(\left(0,1\right]\right) f_p\left(1\right) - \int_{\left(0,1\right]} \rho\left(\left(0,t\right]\right) f_p'\left(t\right)\,dt
\\&\leq \rho\left(\left(0,1\right]\right) \frac{2p}{1+p^2} - \int_{\left(0,1\right]} \beta t \cdot \frac {2p\left(p^2-1\right)t}{\left(t^2+p^2\right)^2}\,dt
\\&\leq \rho\left(\left(0,1\right]\right) \cdot 1 + \beta \left(1-p^2\right) \left[\arctan\left(\frac tp\right)-\frac{tp}{t^2+p^2}\right]_0^1
\\&= \rho\left(\left(0,1\right]\right) + \beta \left(1-p^2\right) \left[\arctan\left(\frac 1p\right) - \frac{p}{1+p^2}\right] \leq \rho\left(\left(0,1\right]\right) + \frac {\beta \pi}2.
\end{align*}
So, for every \(p \in \left(0,1\right)\), we have
\begin{align*}
\int_{\R_+} \frac {p}{\lambda^2+p^2} \,d\mu_H\left(\lambda\right) &= \int_{\R_+} f_p\left(\lambda\right) \,d\rho\left(\lambda\right) = \int_{\left(0,1\right]} f_p\left(\lambda\right)d\rho\left(\lambda\right) + \int_{\left(1,\infty\right)} f_p\left(\lambda\right)d\rho\left(\lambda\right)
\\& \leq \rho\left(\left(0,1\right]\right) + \frac {\beta \pi}2 + \rho\left(\left(1,\infty\right)\right) = \rho\left(\R_+\right) + \frac {\beta \pi}2.
\end{align*}
Therefore, for every \(p \in \R_+\), we have
\[\left|h_H(p)\right| = \frac 1\pi \int_{\R_+} \frac {p}{\lambda^2+p^2} \,d\mu_H\left(\lambda\right) \leq \frac 1\pi \rho\left(\R_+\right) + \frac 1 2 \max\{\beta,\gamma\}.\]
Since \(h_H(-p) = -h_H(p)\), this yields
\[\left\lVert h_H\right\rVert_\infty \leq \frac 1\pi \rho\left(\R_+\right) + \frac 12 \max\{\beta,\gamma\}\]
and therefore \(h_H \in L^\infty\left(\R,\C\right)^\sharp\), where \(h_H^\sharp = h_H\) follows by \(h_H(-p) = -h_H(p) = \overline{h_H(p)}\).

\nin {\bf Part 2:} 
For the second statement, we recall the function 
\[\kappa: \C \setminus \left(-\infty,0\right] \to \C, \quad 
\kappa(z) =  \int_{\R_+} \frac \lambda{1+\lambda^2}-\frac 1{\lambda+z} \,d\mu_H\left(\lambda\right)\]
from \eqref{eq:kappa}. 
Then, for \(p \in \R^\times\), we have
\begin{align*}
\mathrm{Im}\left(\kappa\left(ip\right)\right) &= \mathrm{Im}\left(\int_{\R_+} \frac \lambda{1+\lambda^2}-\frac 1{\lambda+ip} \,d\mu_H\left(\lambda\right)\right)
\\&= \mathrm{Im}\left(\int_{\R_+} \frac \lambda{1+\lambda^2} - \frac {\lambda-ip}{\lambda^2+p^2} \,d\mu_H\left(\lambda\right)\right) = \int_{\R_+} \frac {p}{\lambda^2+p^2} \,d\mu_H\left(\lambda\right),
\end{align*}
so
\begin{equation}
  \label{eq:hkapparel}
h_H\left(p\right) = \frac i\pi \cdot \mathrm{Im}\left(\kappa\left(ip\right)\right).
\end{equation}
For the real part, we get
\begin{align*}
\mathrm{Re}\left(\kappa\left(ip\right)\right) &= \mathrm{Re}\left(\int_{\R_+} \frac \lambda{1+\lambda^2}-\frac 1{\lambda+ip} \,d\mu_H\left(\lambda\right)\right) = \mathrm{Re}\left(\int_{\R_+} \frac \lambda{1+\lambda^2}-\frac {\lambda-ip}{\lambda^2+p^2} \,d\mu_H\left(\lambda\right)\right)
\\&= \int_{\R_+} \frac \lambda{1+\lambda^2}- \frac {\lambda}{\lambda^2+p^2} \,d\mu_H\left(\lambda\right) = \left(p^2-1\right) \int_{\R_+} \frac {\lambda}{\left(1+\lambda^2\right)\left(\lambda^2+p^2\right)} \,d\mu_H\left(\lambda\right)
\end{align*}
and therefore
\begin{align*}
\left|\mathrm{Re}\left(\kappa\left(ip\right)\right)\right| &=  \left|p^2-1\right| \int_{\R_+} \frac {\lambda}{\left(1+\lambda^2\right)\left(\lambda^2+p^2\right)} \,d\mu_H\left(\lambda\right)
\\&\leq \left|p^2-1\right| \int_{\R_+} \frac {4\lambda}{\left(1+\lambda\right)^2\left(\left|p\right|+\lambda\right)^2} \,d\mu_H\left(\lambda\right).
\end{align*}
For \(p \in \R^\times\), we now define the function
\[n_p: \C_+ \to \C, \quad n_p(z) =  \frac{2\sqrt{z}}{\left(1-iz\right)\left(\left|p\right|-iz\right)},\]
where by \(\sqrt{\cdot}\) we denote the inverse of the function
$\C_r \cap \C_+  \to \C_+,  z \mapsto z^2.$ 
Then \(n_p\) is holomorphic on \(\C_+\) and for \(y > 0\), we have
\[\left|n_p\left(x+iy\right)\right|^2 = \frac{4\sqrt{x^2+y^2}}{\left((1+y)^2+x^2\right)\left((\left|p\right|+y)^2+x^2\right)} \leq \frac{4\sqrt{x^2+y^2}}{\left(1+y^2+x^2\right)\left(p^2+x^2\right)} \leq \frac{2}{p^2+x^2},\]
so
\[\sup_{y > 0} \int_\R \left|n_p\left(x+iy\right)\right|^2 dx \leq \int_\R \frac{2}{p^2+x^2} \,dx = \frac{2\pi}{\left|p\right|} < \infty\]
and therefore \(n_p \in H^2\left(\C_+\right)\). Since 
\(\mu_H\) is a Carleson measure, 
by Theorem~\ref{thm:widom-hp}(a), 
there is a constant \(\alpha \geq 0\) such that
\[\int_{\R_+} \oline{f\left(i\lambda\right)}g\left(i\lambda\right) \,d\mu_H\left(\lambda\right) \leq \alpha \left\lVert f\right\rVert_2\left\lVert g\right\rVert_2 \quad \mbox{ for every } \quad f,g \in H^2\left(\C_+\right).\]
 Then
\begin{align*}
\left|\mathrm{Re}\left(\kappa\left(ip\right)\right)\right| &\leq \left|p^2-1\right| \int_{\R_+} \frac {4\lambda}{\left(1+\lambda\right)^2\left(\left|p\right|+\lambda\right)^2} \,d\mu_H\left(\lambda\right) = \left|p^2-1\right| \int_{\R_+} \left|n_p\left(i\lambda\right)\right|^2 \,d\mu_H\left(\lambda\right)
\\&\leq \left|p^2-1\right| \alpha \left\lVert n_p\right\rVert_2^2 = \alpha \left|p^2-1\right| \int_\R \frac{4\left|x\right|}{\left(1+x^2\right)\left(p^2+x^2\right)} \,dx
\\&=4\alpha \left|p^2-1\right| \int_0^\infty \frac{2x}{\left(1+x^2\right)\left(p^2+x^2\right)} \,dx = 4\alpha  \left|\int_0^\infty \frac{2x}{1+x^2} - \frac{2x}{p^2+x^2} \,dx\right|
\\&=4\alpha \left|\left[\log\left(1+x^2\right)-\log\left(p^2+x^2\right)\right]_0^\infty \right| = 4\alpha \left|\left[\log\left(\frac{1+x^2}{p^2+x^2}\right)\right]_0^\infty \right| = 8 \alpha \left|\log\left(\left|p\right|\right)\right|
\end{align*}
for every \(p \in \R^\times\). This estimate together with \(\left\lVert h_H\right\rVert_\infty<\infty\) shows that, for \(z,w \in \C_+\), the integrals
\[\int_\R \frac{\kappa\left(ip\right)}{\left(p-z\right)\left(p-w\right)} \,dp \quad \text{and} \quad \int_\R \frac{\oline{\kappa\left(ip\right)}}{\left(p-z\right)\left(p-w\right)} \,dp\]
exist. We have
\begin{equation}
  \label{eq:dag}
\int_\R \frac{\kappa\left(ip\right)}{\left(p-z\right)\left(p-w\right)} \,dp = \int_\R \frac{\kappa\left(-ip\right)}{\left(p+z\right)\left(p+w\right)} \,dp = 0
\end{equation}
because the function
$p \to \frac{\kappa\left(-ip\right)}{\left(p+z\right)\left(p+w\right)}$ 
is holomorphic on $\C_+$.

By the Residue Theorem, for \(z,w \in \C_+\) with \(z \neq w\) and \(\kappa\left(-iz\right) \neq 0 \neq \kappa\left(-iw\right)\), we get
\begin{align*}
\int_\R \frac{\oline{\kappa\left(ip\right)}}{\left(p-z\right)\left(p-w\right)} \,dp &= \int_\R \frac{\kappa\left(-ip\right)}{\left(p-z\right)\left(p-w\right)} \,dp = 2\pi i\left(\frac{\kappa\left(-iz\right)}{z-w} + \frac{\kappa\left(-iw\right)}{w-z}\right)
\\&= 2\pi i \,\frac{\kappa\left(-iz\right)-\kappa\left(-iw\right)}{z-w} 
\ {\buildrel\eqref{eq:kerl-rel} \over =}\ (2\pi)^3 \,Q_H(z,-\overline{w}).
\end{align*}
By  continuity of both sides in \(z\) and \(w\), we get
\begin{equation}
  \label{eq:seconddiffrel}
\int_\R \frac{\oline{\kappa\left(ip\right)}}{\left(p-z\right)\left(p-w\right)} \,dp = (2\pi)^3 \,Q_H(z,-\overline{w})
\quad \mbox{ for every } \quad z,w \in \C_+.  
\end{equation}
For $z,w \in \C_+$, we 
finally obtain 
\begin{align*}
4\pi^2 Q_{H_{h_H}}\left(z,w\right) 
&= 4\pi^2 \la Q_z, h_HRQ_w \ra  
= \int_\R \frac{h_H\left(p\right)}{\left(p-z\right)\left(-p-\oline{w}\right)}\,dp = \int_\R \frac{-h_H\left(p\right)}{\left(p-z\right)\left(p+\oline{w}\right)}\,dp
\\&= \int_\R \frac{-\frac i\pi \cdot \mathrm{Im}\left(\kappa\left(ip\right)\right)}{\left(p-z\right)\left(p+\oline{w}\right)}\,dp = \frac 1{2\pi} \int_\R \frac{\oline{\kappa\left(ip\right)}-\kappa\left(ip\right)}{\left(p-z\right)\left(p+\oline{w}\right)}\,dp
\\& = \frac 1{2\pi} \left(\int_\R \frac{\oline{\kappa\left(ip\right)}}{\left(p-z\right)\left(p+\oline{w}\right)}\,dp - \int_\R \frac{\kappa\left(ip\right)}{\left(p-z\right)\left(p+\oline{w}\right)}\,dp\right)
\\&\ {\buildrel \eqref{eq:dag}\over =}\ \frac 1{2\pi} \int_\R \frac{\oline{\kappa\left(ip\right)}}{\left(p-z\right)\left(p+\oline{w}\right)}\,dp 
\ {\buildrel \eqref{eq:seconddiffrel} \over =}\  4\pi^2  Q_H(z,w)
\end{align*}
This means that the operators $H$ and 
$H_{h_H}$ have the same symbol kernel, hence are equal 
by \cite[Lemma~I.2.4]{Ne99}. 
\end{proof}

\begin{lemma}
Let \(H \neq 0\) be a positive Hankel operator on \(H^2(\C_+)\). 
Then there exist \(c,a \in \R_+\) such that
\[\left|h_H\left(p\right)\right| \geq c \cdot \frac {\left|p\right|}{a^2+p^2}
\quad \mbox{ for every } \quad p \in \R^\times.\] 
\end{lemma}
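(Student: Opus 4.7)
The plan is to read off the inequality directly from the integral representation of $h_H$ given in Theorem~\ref{thm:CarlesonRepresentant}. For $p \in \R^\times$ the formula gives
\[
|h_H(p)| = \frac{|p|}{\pi} \int_{\R_+} \frac{d\mu_H(\lambda)}{\lambda^2+p^2},
\]
so it suffices to find an interval $[\alpha,\beta] \subseteq (0,\infty)$ on which $\mu_H$ puts positive mass, since restriction of the integral to such an interval and a crude estimate $\lambda^2+p^2 \leq \beta^2+p^2$ will then yield a lower bound of the desired shape.

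The existence of such an interval will be the only nontrivial point. I would argue as follows: since $H \neq 0$, the symbol kernel $Q_H$ is not identically zero, and by the reproducing property $Q_H(z,w) = \la Q_z, H Q_w\ra$ together with the representation \eqref{eq:KHdef} in terms of $\mu_H$, we conclude $\mu_H \neq 0$. Combined with the fact that $\mu_H(\{0\}) = 0$ noted after Theorem~\ref{thm:widom-hp}, the continuity of measure from below gives
\[
0 < \mu_H((0,\infty)) = \lim_{n \to \infty} \mu_H\big([1/n, n]\big),
\]
so there is some $n \in \N$ with $m := \mu_H([1/n, n]) > 0$.

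Setting $a := n$, for every $\lambda \in [1/n, n]$ one has $\lambda^2 + p^2 \leq a^2 + p^2$, and therefore
\[
|h_H(p)| \geq \frac{|p|}{\pi} \int_{[1/n,n]} \frac{d\mu_H(\lambda)}{\lambda^2+p^2} \geq \frac{m}{\pi} \cdot \frac{|p|}{a^2+p^2}
\]
for every $p \in \R^\times$, which is the desired estimate with $c := m/\pi$. The main (and only) obstacle is the justification that $\mu_H$ has strictly positive mass on some compact subinterval of $(0,\infty)$; once that is settled, the estimate is essentially a one-line computation and no additional input from Widom's theorem is needed.
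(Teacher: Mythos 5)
Your proposal is correct and follows essentially the same route as the paper: both restrict the integral $\frac{|p|}{\pi}\int_{\R_+}\frac{d\mu_H(\lambda)}{\lambda^2+p^2}$ to a set of positive $\mu_H$-mass bounded above by some $a>0$ and then use $\lambda^2+p^2\le a^2+p^2$ there (the paper uses $(0,a]$ with $\mu_H((0,a])>0$, you use $[1/n,n]$, which changes nothing essential). Your justification that such a set exists is slightly more elaborate than needed, but it is sound.
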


\begin{proof}
Since \(H \neq 0\), we have \(\mu_H \neq 0\), hence 
\(\mu_H\left(\left(0,a\right]\right)>0\) for some $a > 0$. Then 
setting \(c := \frac{\mu_H\left(\left(0,a\right]\right)}\pi\), for \(p \in \R^\times\), we have
\begin{align*}
\left|h_H\left(p\right)\right| &= \frac 1\pi \int_{\R_+} \frac{\left|p\right|}{\lambda^2+p^2} \,d\mu_H\left(\lambda\right) \geq \frac 1\pi \int_{\left(0,a\right]} \frac{\left|p\right|}{\lambda^2+p^2} \,d\mu_H\left(\lambda\right)
\\&\geq \frac 1\pi \int_{\left(0,a\right]} \frac{\left|p\right|}{a^2+p^2} \,d\mu_H\left(\lambda\right) = c \cdot \frac {\left|p\right|}{a^2+p^2}. \qedhere
\end{align*}
\end{proof}
Choosing the measure \(\mu = \delta_a\) for an \(a \in \R_+\) shows 
that the estimate in this lemma is optimal.

\begin{definition} \mlabel{def:4.3} {(cf. \cite[Thm. 5.13]{RR94})}
A holomorphic function on $\C_+$ 
is called an {\it outer function} if it is of the form
\begin{equation*}
\Out(k,C)(z) = C \exp\left(\frac{1}{\pi i} \int_\R \left[\frac{1}{p-z} - \frac{p}{1 + p^2}\right] \log\left(k\left(p\right)\right) dp\right),
\end{equation*}
where \(C \in \T\) and $k \:\R \to \R_+$  satisfies 
$\int_\R \frac{\left|\log\left(k\left(p\right)\right)\right|}{1 + p^2} \,dp < \infty.$ Then $k = |\Out(k,C)^*|$. 
We write $\Out(k) := \Out(k,1)$. 
If $k_1$ and $k_2$ are two such functions, then 
so is their product, and 
\begin{equation}
  \label{eq:prodout}
 \Out(k_1 k_2) = \Out(k_1) \Out(k_2).
\end{equation}
We also note that the function $k^\vee(p) =k(-p)$ satisfies
\begin{equation}
  \label{eq:outsharp}
 \Out(k^\vee) = \Out(k)^\sharp.
\end{equation}
\end{definition}

\begin{theorem} \mlabel{thm:4.1.5}
Let \(H\) be a positive Hankel operator on \(H^2(\C_+)\). Then, for every \(c \in \R^\times\), we have
\[\delta := h_H + c\textbf{1} \in L^\infty(\R,\C) \quad \text{and} \quad 
\frac{1}{\delta}  \in L^\infty(\R,\C).\]
Further \(H_\delta=H\) and there exists an outer function 
\(g \in H^\infty(\C_+)^\times\) (the unit group of this Banach algebra) 
such that \(\left|g^*\right|^2 = \left|\delta\right|\).
\end{theorem}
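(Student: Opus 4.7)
My plan is to verify the four assertions in order, each following quickly from Theorem~\ref{thm:CarlesonRepresentant}, Lemma~\ref{lem:kernelLowerHalfPlane}(b), and the standard properties of outer functions collected in Definition~\ref{def:4.3}. The shift by $c\mathbf{1}$ (with $c\in\R^\times$) plays the role of moving $\delta$ uniformly away from $0$, and this is what will make the resulting outer function invertible.

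For the boundedness claims, I would combine $h_H\in L^\infty(\R,\C)$ (given by Theorem~\ref{thm:CarlesonRepresentant}) with the fact, also recorded there, that $h_H(\R)\subseteq i\R$. The first yields $\delta\in L^\infty(\R,\C)$ immediately. The second lets me write
\[ |\delta(p)|^2 \;=\; c^2 + |h_H(p)|^2 \;\geq\; c^2 \;>\; 0 \]
for almost every $p\in\R$, hence the pointwise bound $|\delta|\geq |c|$ a.e., which gives $\delta^{-1}\in L^\infty(\R,\C)$.

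To show $H_\delta=H$, I would exploit linearity of $h\mapsto H_h$ to write $H_\delta=H_{h_H}+cH_{\mathbf{1}}$. Theorem~\ref{thm:CarlesonRepresentant} provides $H_{h_H}=H$, and Lemma~\ref{lem:kernelLowerHalfPlane}(b) provides $H_{\mathbf{1}}=0$ because the constant function $\mathbf{1}$ lies in $H^\infty(\C_-)$.

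For the final assertion I would set $k:=\sqrt{|\delta|}$. The two-sided bounds $\sqrt{|c|}\leq k\leq \sqrt{\|\delta\|_\infty}$ a.e.\ force $\log k$ to be bounded, which trivially gives the integrability $\int_\R \frac{|\log k(p)|}{1+p^2}\,dp<\infty$ required in Definition~\ref{def:4.3}. Thus $g:=\Out(k)$ is well defined; by the property stated there, $|g^*|=k$, i.e.\ $|g^*|^2=|\delta|$. The Poisson-integral form of $\log|g|$ on $\C_+$ yields $|g|\leq \|k\|_\infty$, so $g\in H^\infty(\C_+)$. For invertibility, $1/k$ is also bounded and its logarithm satisfies the same integrability, so $\Out(1/k)\in H^\infty(\C_+)$; applying \eqref{eq:prodout} to the factorization $k\cdot(1/k)=1$ gives $g\cdot\Out(1/k)=\Out(1)=1$, whence $g^{-1}=\Out(1/k)\in H^\infty(\C_+)$ and therefore $g\in H^\infty(\C_+)^\times$. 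I do not foresee any serious obstacle, since all of the nontrivial analytic content has already been absorbed into Theorem~\ref{thm:CarlesonRepresentant} (which supplies both the bound on $h_H$ and its purely imaginary range) and into the elementary algebra of outer functions recorded in Definition~\ref{def:4.3}.
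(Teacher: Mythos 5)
Your proposal is correct and follows essentially the same route as the paper's own proof: the pointwise bound $|\delta|^2=c^2+|h_H|^2\geq c^2$ from $h_H(\R)\subseteq i\R$, the decomposition $H_\delta=H_{h_H}+H_{c\mathbf{1}}$ with $H_{c\mathbf{1}}=0$ via Lemma~\ref{lem:kernelLowerHalfPlane}(b), and the pair of bounded outer functions $\Out(|\delta|^{\pm 1/2})$ multiplying to $\Out(1)=1$. No substantive differences.
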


\begin{proof}
Since \(h_H\left(\R\right)\subeq i\R\) we have
\[\left\Vert \delta\right\rVert_\infty 
= \sqrt{\left\lVert h_H\right\rVert_\infty^2+c^2} < \infty 
\quad \text{and} \quad \left\lVert \frac{1}{\delta}\right\rVert_\infty \leq \frac 1{|c|},\]
which shows the first statement. 
For the second statement, we notice that 
\(c\textbf{1} \in H^\infty(\C_-)\) implies  \(H_{c\textbf{1}} = 0\) 
by Lemma~\ref{lem:kernelLowerHalfPlane}, so that 
$H_\delta = H_{h_H} + H_{c\textbf{1}} = H + 0 = H$ 
by Lemma~\ref{lem:kernelLowerHalfPlane} and 
Theorem~\ref{thm:CarlesonRepresentant}.

Finally, we have
\[\int_\R \frac{\left|\log \left|\delta\left(p\right)\right|\right|}{1+p^2} \,dp \leq \int_\R \frac{\max\left\{\left|\log \left\Vert \delta\right\rVert_\infty\right|,\left|\log \left\lVert \frac{1}{\delta}\right\rVert_\infty\right|\right\}}{1+p^2} \,dp < \infty\]
and
\[\int_\R \frac{\left|\log \left|\left(\frac 1{\delta}\right)\left(p\right)\right|\right|}{1+p^2} \,dp \leq \int_\R \frac{\max\left\{\left|\log \left\Vert \delta\right\rVert_\infty\right|,\left|\log \left\lVert \frac{1}{\delta}\right\rVert_\infty\right|\right\}}{1+p^2} \,dp < \infty,\]
so we obtain bounded outer functions $\Out(|\delta|^{1/2})$  and 
$\Out(|\delta|^{-1/2})$ 
whose product is $\Out(1) = 1$ (\cite[\S 5.12]{RR94}). 
In particular, $g := \Out(|\delta|^{1/2})$ is invertible 
in  \(H^\infty\left(\C_+\right)\) and $|g^*|^2 = |\delta|$. 
\end{proof}

\subsection{From Hankel positivity to reflection positivity} 
\mlabel{subsec:4.2} 

For a positive Hankel operator \(H\) on \(H^2(\C_+)\) and the corresponding function \(\delta\) from Theorem~\ref{thm:4.1.5}, let \(\nu\) 
be the measure on \(\R\) with 
\[ d\nu\left(x\right) = \left|\delta\left(x\right)\right| \,dx.\] 
As $\delta(-x) = c + h_H(-x) = c - h_H(x) = \oline{\delta(x)}$, 
we have $\delta^\sharp = \delta$, and in particular  
the function $|\delta|$ is symmetric. We consider 
the weighted $L^2$-space 
\(L^2\left(\R,\C,\nu\right)\) with the corresponding scalar product 
$\la \cdot, \cdot \ra_\nu$. 
For the function 
\[ g := \Out(|\delta|^{1/2}) \in H^\infty(\C_+)^\times \] 
 we then have 
\begin{equation}
  \label{eq:g-rels}
 |g^*|^2 = |\delta| \quad \mbox{ and } \quad g^\sharp = g.
\end{equation}
Furthermore, 
$gH^2\left(\C_+\right) = H^2\left(\C_+\right),$ 
and 
\[m_{g^*} : L^2(\R,\nu) \to L^2(\R), \quad f \mapsto g^*\cdot f\]
is an isometric isomorphism of Hilbert spaces. We write 
\[ H^2(\C_+, \nu) := (H^2(\C_+), \|\cdot\|_\nu)\]
for $H^2(\C_+)$, endowed with the scalar product from 
$L^2(\R,\C,\nu)$, so that we obtain a unitary operator 
\[m_g : H^2\left(\C_+,\nu\right) \to H^2(\C_+).\] 
For the unimodular function 
\(u := \frac{\delta}{\left|\delta\right|}\), we get with Theorem~\ref{thm:4.1.5} 
for $a,b \in H^2(\C_+)$:  
\begin{align}
  \label{eq:density1}
 \la a, Hb\ra_{H^2(\C_+)}  
&= \la a^*, \delta R b^*\ra_{L^2(\R)}  
= \la \sqrt{|\delta|}a^*, \sqrt{|\delta|}uR b^*\ra_{L^2(\R)} \notag\\
&= \la a^*, uR b^*\ra_{L^2(\R,\nu)} 
= \la a, H_u b\ra_{H^2(\C_+,\nu)}.
\end{align}
As $\nu$ is symmetric and $u^\sharp = \frac{\delta^\sharp}{|\delta|^\sharp} 
= \frac{\delta}{|\delta|} = u$, 
\[ \theta_u(f)(x) := u(x) f(-x) \] 
defines a unitary involution on $L^2(\R,\nu)$ (and on $L^2(\R)$) 
for which the 
subspace $H^2(\C_+,\nu)$ is $\theta_u$-positive by \eqref{eq:density1} 
(cf. Example~\ref{ex:1.5}). 
Therefore 
\[  (L^2(\R,\nu), H^2(\C_+,\nu), \theta_u, U) \quad \mbox{ with } \quad 
(U_t f)(x) = e^{itx} f(x) \] 
defines a reflection positive one-parameter group.

These are the essential ingredients in the proof of the following theorem: 

\begin{thm} \mlabel{thm:x.1} 
{\rm(Hankel positive representations are reflection positive)} 
Let $(\cE,\cE_+, U, H)$ 
be a regular multiplicity free Hankel 
positive representation of $(\R,\R_+, -\id_\R)$. 
Then there exists an invertible bounded operator $g \in \GL(\cE)$ 
with $g\cE_+ = \cE_+$ commuting with $(U_t)_{t\in \R}$ and a unitary 
involution $\theta \in \GL(\cE)$ such that:
\begin{itemize}
\item[\rm(a)] $\theta U_t \theta = U_{-t}$ for $t \in \R$. 
\item[\rm(b)] $\theta$ is unitary for the scalar product 
$\la \xi, \eta\ra_g :=  \la g \xi, g\eta\ra$. 
\item[\rm(c)] With respect to $\la\cdot,\cdot \ra_g$, the quadruple 
$(\cE,\cE_+,\theta,U)$ is a reflection positive representation. 
\item[\rm(d)] $\la \xi, H \eta \ra 
= \la \xi, \theta \eta \ra_g = \la g\xi, g\theta \eta \ra$ for 
$\xi, \eta \in \cE_+$. 
\end{itemize}
\end{thm}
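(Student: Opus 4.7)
The plan is to reduce the statement to the canonical model on $H^2(\C_+)$ and then to read off $\theta$ and $g$ from Theorem~\ref{thm:4.1.5}, essentially assembling the discussion preceding the theorem into a proof.

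First, by regularity and multiplicity freeness, the Lax--Phillips representation theorem provides a unitary equivalence of $(\cE,\cE_+,U)$ with $(L^2(\R), H^2(\C_+), U)$, where $(U_t f)(x) = e^{itx} f(x)$. Under this identification, the Hankel hypothesis on $H$ combined with Theorem~\ref{thm:3.5} shows that $H$ is a classical positive Hankel operator on $H^2(\C_+)$ in the sense of Definition~\ref{def:3.3}. Theorem~\ref{thm:4.1.5} therefore applies: fixing any $c \in \R^\times$, it yields $\delta := h_H + c\1$ in the unit group of $L^\infty(\R)$, together with an outer function $g \in H^\infty(\C_+)^\times$ satisfying $|g^*|^2 = |\delta|$, $g^\sharp = g$, and $H_\delta = H$.

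I would then define two operators on $\cE = L^2(\R)$. Let $G := m_{g^*}$ be the multiplication operator by the boundary function $g^*$; since $g, 1/g \in H^\infty(\C_+)$, the operator $G$ is bounded and invertible, preserves $\cE_+ = H^2(\C_+)$ with $G\cE_+ = \cE_+$, and commutes with every $U_t$. Setting $u := \delta/|\delta| \in L^\infty(\R,\T)$, which satisfies $u^\sharp = u$ (equivalently $u(-x) = \overline{u(x)}$), I would define $\theta := \theta_u$ by $(\theta_u f)(x) := u(x) f(-x)$; a one-line check using $|u|=1$ shows that $\theta$ is a unitary involution of $\cE$ in the original scalar product. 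These operators play the role of $g$ and $\theta$ in the statement.

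The four assertions then follow by direct calculation. For (a), one has $(\theta_u U_t \theta_u f)(x) = u(x) u(-x) e^{-itx} f(x) = e^{-itx} f(x) = (U_{-t} f)(x)$, since $u(x)u(-x) = 1$. For (b), unitarity of $\theta_u$ with respect to $\la\cdot,\cdot\ra_G$ reduces, after the substitution $x \mapsto -x$, to $|g^*(-x)| = |g^*(x)|$, which follows from $|\delta|^\sharp = |\delta|$. For (d), a direct calculation using $\delta(x) = u(x)|\delta(x)|$ gives, for $\xi,\eta \in \cE_+$,
\[
\la G\xi, G\theta_u\eta\ra
= \int_\R |g^*(x)|^2\, \overline{\xi(x)}\, u(x)\eta(-x)\,dx
= \int_\R \overline{\xi(x)}\, \delta(x)\, \eta(-x)\,dx
= \la \xi, H_\delta \eta\ra = \la \xi, H\eta\ra,
\]
invoking $H_\delta = H$ from Theorem~\ref{thm:4.1.5}. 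Finally, (c) is then immediate: $\theta$-positivity $\la \xi, \theta\xi\ra_G = \la \xi, H\xi\ra \geq 0$ on $\cE_+$ follows from (d) and $H \geq 0$, the intertwining between $\theta$ and $U$ is precisely~(a), and $U_t \cE_+ \subeq \cE_+$ for $t>0$ is part of the hypothesis.

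The genuine obstacle, already surmounted in Theorem~\ref{thm:4.1.5}, is the simultaneous existence of the real shift $\delta = h_H + c\1$ with bounded inverse and of an outer function whose squared modulus on $\R$ equals $|\delta|$. This hinges on the earlier explicit construction of the operator symbol $h_H$ via the Pick function $\kappa$. Once those ingredients are in hand, the passage from Hankel positivity to reflection positivity is essentially algebraic and consists of the bookkeeping above.
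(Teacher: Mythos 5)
Your proposal is correct and follows essentially the same route as the paper: reduction to the canonical model $(L^2(\R),H^2(\C_+),U)$ via Lax--Phillips, then taking $g = m_{g^*}$ and $\theta = \theta_u$ with $u = \delta/|\delta|$ from Theorem~\ref{thm:4.1.5}, and verifying (a)--(d) by the computation that the paper records as \eqref{eq:density1}. Your write-up merely makes explicit the checks that the paper delegates to the discussion preceding the theorem.
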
 

\begin{prf} As we have seen in the introduction to Section~\ref{sec:3}, 
the Lax--Phillips Representation Theorem implies that, 
up to unitary equivalence, $\cE= L^2(\R)$ and $\cE_+ = H^2(\C_+)$ 
with $(U_t f)(x) = e^{itx} f(x)$, so that $H$ corresponds to a positive 
Hankel operator on $H^2(\C_+)$. We use the notation from the preceding 
discussion and Theorem~\ref{thm:4.1.5}. Then 
$m_{g^*}$ defines an invertible operator on $L^2(\R)$ commuting with 
$U$, and $\theta := u R$ satisfies (a) and (b). 
Further, (c) and (d) follow from 
\eqref{eq:density1}. 
\end{prf}

\begin{rem} For $H = H_\delta = P_+ \delta R P_+^*$, we 
see with Example~\ref{ex:hankel1}(b) that 
$H m_g^{-2}$ also is a Hankel operator $H_h$ with the operator symbol 
\[ h(x) = \frac{\delta(x)}{g^*(-x)^2} 
= \frac{\delta(x)}{\oline{g^*(x)}^2}.\] 
As $|g^*|^2 = |\delta|$, the function $h$ is unimodular. 
Further $g^\sharp = g$ and $\delta^\sharp = \delta$ imply $h^\sharp = h$, 
so that $\theta_h = h R$ is a unitary involution. 
We think of the factorization 
\[ H = H_h m_g^2 \] 
as a ``polar decomposition'' of $H$.
\end{rem}

\begin{rem} The weighted Hardy space $H^2(\C_+,\nu)$ has the reproducing 
kernel 
\[ Q^\nu(z,w) = \frac{Q(z,w)}{g(z) \oline{g(w)}}.\] 
In fact, for $f \in H^2(\C_+,\nu)$ we have 
\begin{align*}
 \la Q_w^\nu, f \ra_{H^2(\C_+,\nu)} 
&=  f(w) = g(w)^{-1} (fg)(w) = g(w)^{-1} \la Q_w, fg \ra_{H^2(\C_+)} \\
&=g(w)^{-1} \la g^{-1} Q_w, f \ra_{H^2(\C_+,\nu)}.
\end{align*}
\end{rem}

We have a similar result for the symmetric semigroup $(\Z,\N,-\id_\Z)$, 
which corresponds to single unitary operators.

\begin{thm} \mlabel{thm:x.2} 
{\rm(Hankel positive operators  are reflection positive)} 
Let $(\cE,\cE_+, U, H)$ 
be a regular multiplicity free Hankel 
positive operator. 
Then there exists an invertible bounded operator $g \in \GL(\cE)$ 
with $g\cE_+ = \cE_+$ commuting with $U$ and a unitary 
involution $\theta \in \GL(\cE)$ such that:
\begin{itemize}
\item[\rm(a)] $\theta U \theta = U^*$. 
\item[\rm(b)] $\theta$ is unitary for the scalar product 
$\la \xi, \eta\ra_g :=  \la g \xi, g\eta\ra$. 
\item[\rm(c)] With respect to $\la\cdot,\cdot \ra_g$, the quadruple 
$(\cE,\cE_+,\theta,U)$ is a reflection positive operator. 
\item[\rm(d)] $\la \xi, H \eta \ra = \la \xi, \theta \eta \ra_g$ for 
$\xi, \eta \in \cE_+$. 
\end{itemize}
\end{thm}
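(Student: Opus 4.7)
The plan is to reduce the disc case to the upper half-plane by transferring the Hankel operator through the Cayley-transform unitary $\Gamma_2 \: L^2(\T) \to L^2(\R)$ of \eqref{eq:3.5}, applying Theorem~\ref{thm:4.1.5} on the upper half-plane, and transporting the resulting bounded invertible symbol back to the circle. By regularity, multiplicity freeness, and the Wold decomposition, there is a unitary equivalence $(\cE,\cE_+,U) \cong (L^2(\T), H^2(\bD), S)$ with $(Sf)(z) = z f(z)$, and under this equivalence $H$ becomes a positive bounded Hankel operator on $H^2(\bD)$ in the sense of Theorem~\ref{thm:hankel-disc}. Conjugation by $\Gamma_2$ turns $H$ into a positive Hankel operator $C := \Gamma_2 H \Gamma_2^{-1}$ on $H^2(\C_+)$ by Theorem~\ref{thm:3.5}(d). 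Applying Theorem~\ref{thm:4.1.5} to $C$ produces a symbol $\delta \in L^\infty(\R)$ with $\delta^\sharp = \delta$, $\delta, \delta^{-1} \in L^\infty(\R)$, and $H_\delta = C$.

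I next transport $\delta$ back to a symbol on the circle using the conversion formula \eqref{eq:khrel}: set $k(z) := -\delta(\omega(z))/z$ for $z \in \T$. Using $\omega(\oline z) = -\omega(z)$ for $z \in \T \setminus \{1\}$ together with $\delta^\sharp = \delta$ on $\R$ (which reads $\oline{\delta(-x)} = \delta(x)$), one verifies $k^\sharp = k$ on $\T$. Boundedness of $k^{\pm 1}$ is immediate from that of $\delta^{\pm 1}$, and the reverse direction of Theorem~\ref{thm:3.5}(d) yields $H_k = H$ on $H^2(\bD)$. Since $\log|k| \in L^\infty(\T)$, the classical Szeg\"o construction provides an outer function $g_\bD \in H^\infty(\bD)^\times$ with $|g_\bD^*|^2 = |k|$.

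Define $u := k/|k|$, a unimodular function on $\T$ satisfying $u^\sharp = u$, and set $\theta := m_u R$ with $(Rf)(z) = \oline z f(\oline z)$. Take the operator $g$ in the theorem statement to be multiplication by $g_\bD^*$ on $L^2(\T)$: it is bounded, invertible, commutes with $U = S$, and preserves $H^2(\bD)$, so $\la \xi, \eta \ra_g = \int_\T \oline{\xi(z)}\eta(z)|k(z)|\,dz/(2\pi)$. The verifications (a)--(d) then parallel those for Theorem~\ref{thm:x.1}: (a) follows from $u^\sharp = u$ and $|z|=1$ on $\T$; (b) follows from $|k(\oline z)| = |k(z)|$ after the substitution $w = \oline z$; and (c)--(d) follow from the disc analog of \eqref{eq:density1},
\[ \la \xi, \theta \eta \ra_g = \int_\T \oline{\xi(z)}\,u(z)\,\oline z\,\eta(\oline z)\,|k(z)|\,\frac{dz}{2\pi} = \la \xi, H_k \eta \ra_{H^2(\bD)} = \la \xi, H \eta \ra, \]
valid for $\xi,\eta \in H^2(\bD)$, using $u|k| = k$.

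The main obstacle is bookkeeping in the Cayley-transform symbol conversion: one must confirm that the $\sharp$-involution on $H^\infty(\C_+)$ (which involves $-\oline z$) transforms correctly to the one on $H^\infty(\bD)$ (which involves $\oline z$), and that the factor $\omega^{-1}(x) = (x-i)/(x+i)$ in \eqref{eq:khrel} does not disrupt boundedness or the involution symmetry of $k$. These are routine verifications once the identity $\omega(\oline z) = -\omega(z)$ for $z \in \T \setminus \{1\}$ is in hand.
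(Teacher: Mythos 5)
Your proposal is correct and follows essentially the same route as the paper's proof: reduce via the Wold decomposition to $(L^2(\T),H^2(\bD),S)$, transfer $H$ to a positive Hankel operator $C=\Gamma_2 H\Gamma_2^{-1}$ on $H^2(\C_+)$, apply Theorem~\ref{thm:4.1.5} to get the invertible symbol $\delta$, pull it back through \eqref{eq:khrel} to $k(z)=-\delta(\omega(z))\oline z$ (your $-\delta(\omega(z))/z$ is the same on $\T$), and take $u=k/|k|$, $\theta=uR$ and $g$ the outer function with $|g^*|^2=|k|$. Your added verification of $k^\sharp=k$ via $\omega(\oline z)=-\omega(z)$ is a detail the paper leaves implicit, but the argument is otherwise identical.
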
 

\begin{prf} Up to unitary equivalence, we may assume that 
\[ \cE= L^2(\T), \quad \cE_+ = H^2(\bD) \quad \mbox{ with } \quad 
(U f)(z) = z f(z),\] the shift operator (Wold decomposition), so that 
$H$ corresponds to a positive Hankel operator on $H^2(\bD)$. 

Let $C := \Gamma_2 H \Gamma_2^{-1}$ be the corresponding positive 
Hankel operator on $H^2(\C_+)$ (Theorem~\ref{thm:3.5}) 
which we write as $C = H_\delta$ as above in Theorem~\ref{thm:4.1.5}. Then 
\eqref{eq:khrel} in the proof of Theorem~\ref{thm:3.5} 
shows that 
$H = H_k$ for the function $k \: \T \to \C$ defined by 
\[ k \: \T \to \C,  \quad 
k(z) := -\delta(\omega(z)) \oline z \quad \mbox{ for } \quad z \in \T.\] 
Then  $|k(z)| = |\delta(\omega(z))|$ is bounded with a bounded 
inverse. 

We thus find an outer function $g \in H^\infty(\bD)^\times$ 
with $|g^*|^2 = |k|$ and consider the measure 
$d\nu(z) = |k(z)| \, dz$ on $\T$ 
(\cite[Thm. 17.16]{Ru86}; see also Lemma~\ref{lem:b.11}). Then 
\[ m_{g} \: H^2(\bD,\nu) \to H^2(\bD) \] 
is unitary 
and the unimodular function $u := \frac{k}{|k|}$ on $\T$ satisfies, 
for $a,b \in H^2(\bD)$: 
\begin{align}
  \label{eq:density1-disc}
 \la a, Hb\ra_{H^2(\bD)}  
&= \la a^*, k  R b^*\ra_{L^2(\T)}  
= \la \sqrt{|k|}a^*, \sqrt{|k|}uR b^*\ra_{L^2(\T)} \notag\\
&= \la a^*, uR b^*\ra_{L^2(\T,\nu)} 
= \la a, H_u b\ra_{H^2(\bD,\nu)}.
\end{align}
Clearly, $m_{g^*}$ defines an invertible operator on $L^2(\T)$ commuting with 
$U$ and $\theta := u R$ satisfies (a) and (b). 
As in the proof of Theorem~\ref{thm:x.1}, (c) and (d) 
follow from \eqref{eq:density1-disc}.
\end{prf}

\appendix 

\section{Widom's Theorem on Hankel operators 
on the disc} 
\mlabel{app:a} 

In this appendix we recall Widom's classical 
theorem on positive Hankel operators 
on the Hardy space of the unit disc. 
The arguments mostly follow 
Widom's original proof in \cite{Wi66}, including some 
simplifications. In particular the proof for the implication 
(b) $\Rarrow$ (c) was communicated to us by Christian Berg. 
For more information concerning Widom's Theorem, 
we refer to 
\cite[Thm.~B.6.2.1]{Ni02}, which contains in particular the equivalence 
of (a) and the inclusion $H^2(\bD) \subeq L^2([-1,1],\mu)$.

\begin{thm} \mlabel{thm:widom-disc} {\rm(Widom's Theorem; \cite{Wi66})} 
For a finite positive Borel measure $\mu$ on $[-1,1]$
 with moment sequence 
\[ c_j := \int_{-1}^1 x^j\, d\mu(x), \]  
the following are equivalent: 
\begin{itemize}
\item[\rm(a)] The corresponding Hankel operator $H$ 
on $H^2(\bD)$ is bounded, i.e., 
there exists an $\alpha \geq 0$ with 
\[ \la f, H f \ra_{H^2(\bD)} 
=   \int_{-1}^1 |f(x)|^2\, d\mu(x) \leq \alpha \|f\|^2 
\quad \mbox{ for } \quad f \in H^2(\bD).\] 
\item[\rm(b)] $c_j = O(j^{-1})$ for $j \to \infty$.
\item[\rm(c)] 
$\mu([x,1]) = O(1-x)$ as $x \to 1$ and 
$\mu([-1,x]) = O(1+x)$ as $x \to -1$. 
\end{itemize}
\end{thm}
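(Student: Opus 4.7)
My plan is to establish the equivalences via the cycle (a) $\Rightarrow$ (c) $\Rightarrow$ (b) $\Rightarrow$ (a), supplemented by the short direct (b) $\Rightarrow$ (c) argument attributed to Berg in the acknowledgments. First, for (a) $\Rightarrow$ (c), I would test the boundedness hypothesis against the reproducing kernel $K_x(z) = (1-xz)^{-1}$ of $H^2(\bD)$, whose squared norm is $(1-x^2)^{-1}$. This gives
\[ \int_{-1}^1 (1-xy)^{-2}\, d\mu(y) \leq \frac{\alpha}{1-x^2} \quad \mbox{ for every } x \in (-1,1). \]
Setting $x = 1-\delta$ and observing that $y \in [1-\delta, 1]$ forces $1 - xy \leq 2\delta$, restriction of the integral to that subinterval yields $\mu([1-\delta,1]) \leq 4\alpha\delta/(2-\delta) = O(\delta)$; the $-1$ endpoint is obtained symmetrically with $x = -1+\delta$.

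For (c) $\Rightarrow$ (b), set $F(t) := \mu([1-t,1])$. Condition (c) at $+1$ gives $F(0) = 0$ and $F(t) \leq Ct$ for $t$ small. Substituting $t = 1-x$ in $\int_0^1 x^j\, d\mu$ and integrating by parts yields
\[ \int_0^1 x^j\, d\mu = j \int_0^1 F(t)(1-t)^{j-1}\, dt \leq jC \cdot B(2,j) = \frac{C}{j+1}, \]
via the Beta identity $B(2,j) = 1/(j(j+1))$. The $[-1,0]$ contribution is handled analogously via $|c_j| \leq \int |x|^j d\mu$ and the companion distribution function at $-1$, giving $c_j = O(1/j)$.

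For (b) $\Rightarrow$ (a), I would invoke the classical Hilbert inequality: the kernel $(j+k+1)^{-1}$ defines an operator on $\ell^2$ of norm $\pi$. Since $|c_{j+k}| \leq C/(j+k+1)$ by (b) (absorbing the bounded initial values $|c_j| \leq c_0$ into the constant), the Hankel matrix of $H$ in the orthonormal basis $\{z^j\}_{j \geq 0}$ is termwise dominated. For $f = \sum a_j z^j \in H^2(\bD)$, positivity gives
\[ \la f, Hf\ra = \int_{-1}^1 |f|^2\, d\mu = \sum_{j,k} \oline{a_j} a_k c_{j+k} \leq C \sum_{j,k} \frac{|a_j||a_k|}{j+k+1} \leq C\pi\, \|f\|_{H^2}^2, \]
closing the cycle. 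Berg's direct (b) $\Rightarrow$ (c) proceeds by Markov's inequality: for $x \in (0,1)$ and $j \geq 1$, $\mu([x,1]) \leq c_j/x^j \leq (C/j)/x^j$; choosing $j = \lfloor 1/(1-x) \rfloor$ and using $\lim_{x \to 1^-} x^{1/(1-x)} = e^{-1}$ gives $\mu([x,1]) = O(1-x)$, with the $-1$ side analogous via even $j$ so that $|y|^j \geq |x|^j$ on $[-1,x]$ for $x < 0$.

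The main subtle point is the treatment of the $-1$ endpoint in (c) $\Rightarrow$ (b), where odd moments of $x^j$ induce sign cancellation in $c_j$; this is dispatched cleanly by running the one-sided integration-by-parts argument on each of $[0,1]$ and $[-1,0]$ separately, absorbing signs via $|c_j| \leq \int |x|^j\, d\mu$. The remaining steps are straightforward applications of classical tools: reproducing kernels for the $H^2(\bD)$ geometry, the Beta integral identity, and Hilbert's inequality with its sharp constant $\pi$.
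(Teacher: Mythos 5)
Your proposal is correct and follows essentially the same route as the paper's proof: the reproducing-kernel test function $1/(1-xz)$ for (a)~$\Rightarrow$~(c), integration by parts against the distribution function for (c)~$\Rightarrow$~(b), Hilbert's inequality for (b)~$\Rightarrow$~(a), and Berg's Markov-type estimate with $(1-\tfrac1n)^n\to e^{-1}$ for the supplementary (b)~$\Rightarrow$~(c). The one structural difference is that the paper first splits $\mu$ into its restrictions to $[0,1]$ and $[-1,0]$ and works one-sidedly throughout, which is exactly what makes the inequality $\mu([x,1])\le c_j/x^j$ legitimate for all $j$; in your two-sided formulation this inequality can fail for odd $j$ (mass on $[-1,0)$ contributes negatively to $c_j$), so at the $+1$ endpoint you must also restrict to even $j$, as you already do at $-1$. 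Since your (b)~$\Rightarrow$~(c) is logically redundant given the closed cycle (a)~$\Rightarrow$~(c)~$\Rightarrow$~(b)~$\Rightarrow$~(a), this slip does not affect the validity of the theorem's proof.
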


\begin{prf} Since we may decompose 
$\mu = \mu_1 + \mu_2$ with $\mu_1([-1,0)) = 0$ and 
$\mu_2([0,1]) = 0$, we can reduce the discussion to measures 
on $[-1,0]$ and $[0,1]$. In fact, (a) holds for 
$\mu$ if and only if it holds for $\mu_1$ and~$\mu_2$. 
The same is true for (c), where the first condition refers to 
$\mu_1$ and the second one on~$\mu_2$. 
For (b), we write $c_j = c_j^1 + c_j^2$, according to the 
decomposition of $\mu$. If (b) holds for $\mu_1$ and $\mu_2$, 
then it clearly holds for $\mu$. If, conversely, 
(b) holds for $\mu$, then the positive sequence 
$c_{2j} = c_{2j}^1 + c_{2j}^2$ is $O(j^{-1})$, and since both summands 
are positive, we get $c_{2j}^1 = O(j^{-1})$ and 
$c_{2j}^2 = O(j^{-1})$. As the sequences $c_j^1$ and 
$|c_j^2| = (-1)^j c_j^2$ are decreasing, it follows that 
$c_j^1$ and $c_j^2$ are $O(j^{-1})$. 

After this discussion, it suffices to consider the 
case where $\mu =\mu_1$ is a measure on $[0,1]$.

\nin  (b) $\Rarrow$ (c) 
By (b), there exists $\beta > 0$ such that 
\[ \beta/n \geq c_n=\int_0^1 x^nd\mu(x)\geq \int^1_{1-\frac{1}{n}} x^nd\mu(x)\geq
\Big(1 - \frac{1}{n}\Big)^n \mu\Big(\Big[1-\frac{1}{n},1\Big]\Big) \] 
Using that $\big(1-\frac{1}{n}\big)^n\to e^{-1}$ for $n\to\infty$, we find a 
$\gamma > 0$ 
with 
\[ \mu([1-1/n,1])\leq \gamma/n \quad \mbox{ for all } \quad n \in \N.\]
Finally, since $x\mapsto \mu([1-x,1])$ 
is increasing we get $\mu([1-x,1])\leq 2\gamma x$
by choosing $n$ so that $\frac{1}{n+1}< x\leq \frac{1}{n}$.

\nin (c) $\Rarrow$ (b): 
Suppose that $\mu([1-x, 1]) \leq \gamma x$ 
for $x> 0$ sufficiently small. 
Enlarging $\gamma$ if necessary, we may assume that this relation 
holds for all $x\in [0,1]$.
Integration by parts as in Lemma~\ref{lem:intpart} leads for 
$j > 0$ to 
\begin{align*}
c_j &= \int_0^1 x^j\, d\mu(x) 
= \int_0^1 j x^{j-1}\mu([x,1])\, dx 
\leq j \gamma\Big(\frac{1}{j} - \frac{1}{j+1}\Big) 
\leq  \frac{\gamma}{j+1}.  
\end{align*}

\nin (b) $\Rarrow$ (a): Let $\beta > 0$ be such that 
$c_n \leq \beta/(n+1)$ for $n \in \N$. 
For $(a_n)_{n \in \N} \in \ell^2$, we then have 
\[ \Big|\sum_{n,m\geq 0} c_{n+m} \oline{a_n} a_m\Big| 
\leq \sum_{n,m\geq 0} c_{n+m} |a_n| |a_m| 
\leq \beta \sum_{n,m\geq 0} \frac{|a_n| |a_m|}{1 + n + m}
\leq \beta \pi \|a\|^2 \] 
by Hilbert's Theorem (\cite[Part B, 1.6.7]{Ni02}).

\nin (a) $\Rarrow$ (c): For $0 < r < 1$, we consider the function 
\[ f(z) := \sum_{j = 0}^\infty r^j z^j = \frac{1}{1 - r z} \] 
in $H^2(\bD)$. Then 
\[ H(f,f) = \int_{-1}^1 |f(x)|^2\, d\mu(x) 
\leq \|H\| \|f\|^2 = \|H\| \frac{1}{1 -r^2}.\]
This leads to the estimate 
\[ \frac{\mu([r,1]}{(1-r^2)^2} 
\leq \int_r^1 \frac{1}{(1-rx)^2} \, d\mu(x) 
\leq \int_r^1 |f(x)|^2\, d\mu(x) 
\leq \|H\| \|f\|^2 = \frac{\|H\|}{1 -r^2}\]
and further to 
\[ \mu([r,1]) \leq 2 \|H\| (1-r),\] 
which implies (c). 
\end{prf}

\begin{defn} A measure $\mu$ on $\bD$ for which all 
functions in $H^2(\bD)$ are square-integrable is called 
a {\it Carleson measure} (cf.\ \cite{Ca62}, \cite[p.~327]{Ni02}). 
The implication (c) $\Rarrow$ (a) in Widom's Theorem 
also follows from the much more general Theorem 1 in \cite{Ca62} 
concerning measures on the disc. 
%Carleson's Theorem even applies to $H^p(\bD)$ for any $p \geq 1$. 
\end{defn}

\begin{ex} \mlabel{ex:hilb} 
(a) For the Lebesgue measure $d\mu(x)= dx$ on $[0,1]$, we obtain the 
moment sequence $c_j = \frac{1}{j+1}$, and 
by Hilbert's Theorem (\cite[Part B, 1.6.7]{Ni02}), 
the corresponding Hankel operator is 
bounded. In particular, there exists a constant 
$C$ with 
\[ \int_{[0,1]}|f(x)|^2 \, dx \leq C \|f\|^2 \quad \mbox{ for } \quad 
f \in H^2(\bD).\] 
Note that $\int_{-1}^1 \frac{dx}{1 -x^2} = \infty.$ 

\nin (b) If $s > -\frac{1}{2}$, then the measure 
$d\mu(x) = x^s\, dx$ on $(0,1)$ is finite with moment sequence 
\[ c_j = \int_0^1 x^{j+s}\, dx 
= \frac{1}{j + 1 + s}.\] 
\end{ex}

\begin{ex} Suppose that $\mu$ is a measure on $(-1,1)$ with 
$\int_{-1}^1 \frac{d\mu(x)}{1 -x^2} < \infty.$ 
On $H^2(\bD)$, the Szeg\"o kernel is given by 
\[ Q_w(z) = Q(z,w) = \frac{1}{2\pi} \frac{1}{1 - z \oline w} \] 
We therefore have 
\[ |f(w)|^2 \leq \|f\|^2 \|Q_w\|^2 
= \|f\|^2 Q(w,w) = \frac{1}{2\pi} \frac{\|f\|^2}{1 - |w|^2},\] 
and this shows that 
\[ \int_{-1}^1 |f(x)|^2 \, d\mu(x) 
\leq \|f\|^2 \int_{-1}^1 \frac{d\mu(x)}{1-x^2} < \infty.\]
\end{ex}

For the sake of easier reference, we include the following 
version of integration by parts in this appendix. 
\begin{lem} {\rm(Integration by parts)} 
\mlabel{lem:intpart}
Let $a < b$ be real numbers and $f \in C^1([a,b])$. 
For a finite positive Borel measure $\mu$ on $[a,b]$ we then have 
\[ \int_a^b f(x)\, d\mu(x) 
= \mu([a,b]) f(a) + \int_a^b \mu([t,b]) f'(t)\, dt\] 
and 
\[ \int_a^b f(x)\, d\mu(x) 
= \mu([a,b]) f(b) - \int_a^b \mu([a,t]) f'(t)\, dt\] 
\end{lem}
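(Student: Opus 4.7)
The plan is to deduce both identities from Fubini's theorem applied to the fundamental theorem of calculus. The proof is elementary, so the main thing to be careful about is the treatment of the boundary and which half-open or closed intervals appear after swapping the order of integration.

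First I would write, for $x \in [a,b]$,
\[
 f(x) = f(a) + \int_a^x f'(t)\, dt = f(a) + \int_a^b \mathbf{1}_{[a,x]}(t)\, f'(t)\, dt.
\]
Substituting into $\int_a^b f(x)\, d\mu(x)$ and separating the constant term gives
\[
 \int_a^b f(x)\, d\mu(x) = \mu([a,b])\, f(a) + \int_a^b \int_a^b \mathbf{1}_{\{(t,x)\, :\, a \leq t \leq x \leq b\}}(t,x)\, f'(t)\, dt\, d\mu(x).
\]
Since $f'$ is continuous on $[a,b]$, hence bounded, and $\mu$ is finite, the integrand on $[a,b]^2$ is bounded by $\|f'\|_\infty$, which is integrable with respect to the finite product measure $dt \otimes d\mu$. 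Thus Fubini's theorem applies and I may swap the order of integration: for fixed $t \in [a,b]$, the slice is $\{x : t \leq x \leq b\} = [t,b]$, so the double integral equals
\[
 \int_a^b \mu([t,b])\, f'(t)\, dt,
\]
which establishes the first identity.

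For the second identity I would proceed symmetrically, using
\[
 f(x) = f(b) - \int_x^b f'(t)\, dt = f(b) - \int_a^b \mathbf{1}_{[x,b]}(t)\, f'(t)\, dt,
\]
inserting this into $\int_a^b f(x)\, d\mu(x)$, and again invoking Fubini. Now for fixed $t \in [a,b]$ the slice $\{x : a \leq x \leq t\}$ equals $[a,t]$, producing $\mu([a,t])$ inside the $dt$-integral with the expected minus sign. Alternatively, one may derive the second identity from the first by applying it to the pushforward measure $\tilde\mu := r_*\mu$ under the reflection $r(x) = a+b-x$ together with the function $\tilde f(x) := f(a+b-x)$, but the direct repetition is quicker.

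The only subtlety worth recording is the choice of closed intervals $[t,b]$ and $[a,t]$ under the integral sign: the slice of $\{(t,x) : a \leq t \leq x \leq b\}$ at a fixed $t$ is the closed interval $[t,b]$, and since the set of $t$ where $\mu(\{t\}) > 0$ is countable, this convention is equivalent (as functions of $t$, up to a Lebesgue null set) to using $(t,b]$, so there is no ambiguity in the value of the integral; I would remark on this briefly to avoid any appearance of imprecision at atoms of $\mu$. No genuine obstacle arises.
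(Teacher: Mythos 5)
Your proposal is correct and follows essentially the same route as the paper: write $f(x)=f(a)+\int_a^x f'(t)\,dt$, integrate against $\mu$, and apply Fubini's theorem to the double integral over $\{a\le t\le x\le b\}$, with the second identity obtained symmetrically. Your extra remark about atoms of $\mu$ and the choice of closed versus half-open slices is a harmless refinement the paper does not bother to make.
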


\begin{prf} With Fubini's Theorem, we obtain 
  \begin{align*}
\int_a^b f(x)\, d\mu(x) 
&= \int_a^b\Big( f(a) + \int_a^x f'(t)\, dt\Big)\, d\mu(x) 
= f(a) \mu([a,b]) +\int \int_{a \leq t \leq x \leq b} \, d\mu(x)\, f'(t)\, dt\\ 
&= f(a) \mu([a,b]) +\int_a^b \mu([t,b])\, f'(t)\, dt.
\end{align*}
We likewise get the second assertion.
\end{prf}

\begin{rem} There exists a refinement of Widom's Theorem 
characterizing those Hankel operators for which the 
measure $\mu$ lives on $[0,1)$, i.e., $\mu((-1,0)) = 0$ 
(\cite{GP15}). This condition means that, 
not only the moment sequence 
$(c_n)_{n \in \N_0}$ of $\mu$ is positive definite on $\N_0$, 
but also the shifted sequence $(c_{n+1})_{n \in \N_0}$. 
As the shifted sequence satisfies 
\[ c_{n+1} = \int_{-1}^1 t^n \cdot t\, d\mu(t) \quad \mbox{ for } \quad 
n \in \N_0,\] 
its positive definiteness is equivalent to the positivity of the 
measure $t\, d\mu(t)$, which is equivalent to $\mu((-1,0)) = 0$. 
\end{rem}

\section{The Banach $*$-algebra $(H^\infty(\Omega), \sharp)$} 
\mlabel{app:b} 

Let $\Omega \subeq \C$ be a proper simply connected domain. 
By the Riemann Mapping Theorem, there exists a biholomorphic 
map $\phi \: \bD \to \Omega$, so that 
$\sigma(z) := \phi(\oline{\phi^{-1}(z)})$ defines an antiholomorphic involution 
on $\Omega$. We thus obtain on the Banach algebra $H^\infty(\Omega)$ of bounded 
holomorphic functions on $\Omega$ the isometric antilinear involution 
\begin{equation}
  \label{eq:hinftysharp}
 f^\sharp(z) := \oline{f(\sigma(z))},
\end{equation}
turning into a Banach $*$-algebra. 
As this algebra and some of its subsemigroups play a key role in many of 
our arguments, we take in this appendix a closer look at some of its 
features. Its natural weak topology is of utmost importance 
because the weakly continuous positive functionals turn out to be closely 
related to Hankel operators resp., to measures on the fixed point set 
$\Omega^\sigma$ of $\sigma$ on $\Omega$ 
(cf.\ Proposition~\ref{prop:4.6}).

\subsection{The weak topology} 
\mlabel{app:b.1} 

\begin{lem} \mlabel{lem:b.0} Two antiholomorphic involutions on $\Omega$ 
are conjugate under the group  $\Aut(\Omega)$ of biholomorphic automorphisms.   
\end{lem}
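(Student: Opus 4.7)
The plan is to use the Riemann Mapping Theorem to reduce the statement to the unit disc $\bD$ and then show that every antiholomorphic involution of $\bD$ is conjugate under $\Aut(\bD)$ to the standard complex conjugation $c(z) = \oline z$; two such involutions are then both conjugate to $c$, hence to each other. The reduction is cheap: if $\phi \: \bD \to \Omega$ is a biholomorphism, then $\sigma \mapsto \phi^{-1} \circ \sigma \circ \phi$ is a bijection between the antiholomorphic involutions of $\Omega$ and those of $\bD$ which intertwines the conjugation actions of $\Aut(\Omega)$ and $\Aut(\bD)$, so it suffices to treat $\Omega = \bD$.

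The delicate step is to produce a fixed point of $\sigma$ in $\bD$, and for this I would first observe that $\sigma$ is an isometry of the Poincar\'e metric: $c$ is visibly an isometry, $c \circ \sigma$ is a holomorphic automorphism of $\bD$ and hence an isometry by the Schwarz--Pick lemma, so $\sigma = c \circ (c \circ \sigma)$ is a composition of isometries. Now pick any $p \in \bD$; if $\sigma(p) \neq p$, then $\sigma$ swaps the endpoints of the hyperbolic geodesic segment from $p$ to $\sigma(p)$ and preserves the metric, so it must fix the (unique) hyperbolic midpoint $m$ of that segment. Since $\Aut(\bD)$ acts transitively on $\bD$, I can conjugate $\sigma$ by an element of $\Aut(\bD)$ taking $m$ to $0$ and assume from the start that $\sigma(0) = 0$.

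With $\sigma(0) = 0$, the map $f := c \circ \sigma$ is a holomorphic automorphism of $\bD$ fixing the origin, hence a rotation $f(z) = e^{i\theta} z$ by Schwarz's lemma, which gives $\sigma(z) = e^{-i\theta}\oline z$. Conjugating by the rotation $\beta(z) = e^{-i\theta/2} z$ yields $(\beta^{-1} \sigma \beta)(z) = \oline z = c(z)$, finishing the proof. The only genuinely non-formal step in this plan is the fixed-point production; the rest is a short Schwarz-lemma normal form computation, and no substantial obstacle beyond correctly identifying $\sigma$ as a hyperbolic isometry is expected.
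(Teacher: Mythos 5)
Your proposal is correct and follows essentially the same route as the paper: reduce to $\bD$ via the Riemann Mapping Theorem, use that $\sigma$ is a hyperbolic isometry to fix the midpoint of $p$ and $\sigma(p)$, normalize so that $\sigma(0)=0$, and apply Schwarz's lemma to identify $\sigma$ as a rotated conjugation, conjugate to $z \mapsto \oline z$ by a half-rotation. The only differences are expository: you spell out the Schwarz--Pick justification of the isometry property and the uniqueness-of-midpoint argument, which the paper leaves implicit.
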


\begin{prf} By  the Riemann Mapping Theorem, we may assume that 
$\Omega = \bD$ is the unit disc. Let $\sigma \: \bD \to\bD$ be an antiholomorphic 
involution. Then $\sigma$ is an isometry for the hyperbolic metric. 
Therefore the midpoint of $0$ and $\sigma(0)$ is fixed by $\sigma$. Conjugating 
by a suitable automorphism of~$\bD$, we may therefore assume that 
$\sigma(0) = 0$. Then $\psi(z) := \sigma(\oline z)$ is a holomorphic automorphism 
fixing $0$, hence of the form $\psi(z) = e^{i\theta} z$ for some $\theta \in \R$,
so that $\sigma(z) = e^{i\theta} \oline z = \gamma(\oline{\gamma^{-1}(z)})$ 
for $\gamma(z) = e^{i\theta/2}z$. 
\end{prf}

As all these involutions are conjugate under the group $\Aut(\Omega)$ 
by Lemma~\ref{lem:b.0}, all Banach $*$-algebras $(H^\infty(\Omega), \sharp)$ 
are isomorphic. 

According to Ando's Theorem 
(\cite{An78}), the Banach space $H^\infty(\Omega) \cong H^\infty(\bD)$ 
has a unique predual space $H^\infty(\Omega)_* \subeq  H^\infty(\Omega)^*$, 
hence carries a natural {\it weak topology}, which is the initial 
(locally convex) topology defined by the elements of the predual.  
Note that the predual is norm-closed in $H^\infty(\Omega)^*$ because its 
embedding is isometric. 

\begin{ex} \mlabel{ex:4.1} (a) (The upper half plane $\C_+$) We consider on 
$H^\infty(\C_+)$ the  continuous linear functionals 
\[ \eta_f(g) := \int_\R g^*(x)f(x)\, dx, \qquad f \in L^1(\R), g \in H^\infty(\C_+).\] 
Recall that $L^\infty(\R) \cong L^1(\R)^*$. By 
\cite[Ex.~12, p.~115]{RR94}, the closed subspace 
\[ H^\infty(\C_+) 
\cong \{ g \in L^\infty(\R) \: g H^2(\C_+) \subeq H^2(\C_+) \} \] 
of $L^\infty(\R)$ coincides with the annihilator 
of the subspace $H^1(\C_+)$ of $L^1(\R)$. 
Therefore 
\[ (L^1(\R)/H^1(\C_+))^* \cong H^1(\C_+)^\bot \cap L^\infty(\R) = H^\infty(\C_+).\]
By Ando's Theorem, 
\begin{equation}
  \label{eq:quot}
H^\infty(\C_+)_* \cong L^1(\R)/H^1(\C_+) 
\end{equation}
is the unique predual of $H^\infty(\C_+)$. 
In particular, the weak topology is the initial topology with respect to the functionals 
$\eta_f$, $f \in L^1(\R)$. 

As the predual $H^\infty(\C_+)_*$ is a norm-closed subspace of $H^\infty(\C)^*$, 
the image of the map 
\[ L^1(\R) \to H^\infty(\C_+)^*, \quad f \mapsto \eta_f \] 
is closed. 
For $f \in L^1(\R)$, we have $\eta_f^\sharp = \eta_{f^\sharp}$, so 
that $\eta_f$ is symmetric if $f = f^\sharp$. 

\nin (b) (The unit disc $\bD$) 
For the disc, we define for $f \in L^1(\T)$ the functional 
\[ \eta_f(g) 
= \int_{\T} f(e^{it}) g^*(e^{it})\, dt  \] 
on $H^\infty(\bD)$. Then the unique predual of $H^\infty(\bD)$ is the quotient of 
$L^1(\T)$ by the subspace \break 
$\{ f \in L^1(\T) \: \eta_f = 0 \},$ 
which by \cite[Ch.~17, Ex.~2.9]{Ru86} 
is contained in $H^1(\bD)$ 
(see also the proof of Lemma~\ref{lem:polweakdense}). 
For $g \in H^\infty(\bD)$ and $f \in H^1(\bD)$, we have 
\[ \frac{\eta_{f^*}(g)}{2\pi} 
= \int_{\T} f^*(e^{it}) g^*(e^{it})\, \frac{dt}{2\pi}  
= (fg)(0) = f(0) g(0), \] 
so that $\eta_{f^*} = 0$ is equivalent to $0 = f(0) = \frac{\eta_{f^*}(1)}{2\pi}$. 
With $H^1_0(\bD) := \{ f \in H^1(\bD) \: f(0) = 0\},$ 
we thus obtain 
\begin{equation}
  \label{eq:predual-disc}
 H^\infty(\bD)_* \cong L^1(\T)/H^1_0(\bD).
\end{equation}
\end{ex}

\begin{lem}
  \mlabel{lem:unitball} 
On the closed unit ball $B \subeq H^\infty(\Omega)$, 
the following topologies coincide and turn $B$ into a compact space: 
\begin{itemize}
\item[\rm(a)] The topology $\tau_c$ of uniform convergence on compact subsets of $\Omega$. 
\item[\rm(b)] The topology $\tau_p$ of pointwise convergence.
\item[\rm(c)] The weak topology $\tau_w$. 
\end{itemize}
\end{lem}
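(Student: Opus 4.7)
Since by the Riemann Mapping Theorem any such $\Omega$ is biholomorphic to $\bD$, and all three topologies $\tau_c, \tau_p, \tau_w$ transport naturally under biholomorphic maps --- $\tau_w$ is intrinsic to the Banach space structure by Ando's Theorem (uniqueness of predual), and $\tau_c, \tau_p$ pull back via composition with the Riemann map --- I may reduce to $\Omega = \bD$. The global strategy is to produce compactness in two of the topologies independently, observe the trivial continuity relations between them, and invoke the principle that a continuous bijection from a compact space to a Hausdorff space is a homeomorphism.

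\textbf{First half: $\tau_c = \tau_p$ on $B$.} By Montel's Theorem every sequence in $B$ admits a subsequence converging uniformly on compact subsets of $\bD$ to a holomorphic function, whose supremum norm is bounded by the limit of $\|g_n\|_\infty \le 1$, hence lies in $B$. Combined with the well-known metrizability of $\tau_c$ (using a compact exhaustion of $\bD$), this shows $(B,\tau_c)$ is compact. The identity map $(B,\tau_c)\to(B,\tau_p)$ is continuous by definition, and $\tau_p$ is Hausdorff because holomorphic functions are determined by their pointwise values. A continuous bijection from a compact space to a Hausdorff space is a homeomorphism, giving $\tau_c = \tau_p$ on $B$.

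\textbf{Second half: $\tau_w = \tau_p$ on $B$.} By Ando's Theorem $H^\infty(\bD) = (H^\infty(\bD)_*)^*$, so Banach--Alaoglu makes $(B,\tau_w)$ compact. To compare $\tau_w$ and $\tau_p$, I must show that each point evaluation $\mathrm{ev}_z \: g\mapsto g(z)$ for $z\in\bD$ is weakly continuous, i.e.\ lies in the image of the predual $L^1(\T)/H^1_0(\bD)$ (Example~\ref{ex:4.1}(b)). For this I use the Poisson kernel $P_z\in L^1(\T)$ together with the standard fact that for $g\in H^\infty(\bD)$ the non-tangential boundary value $g^*$ reproduces $g$ via
\[
 g(z) = \frac{1}{2\pi}\int_\T g^*(e^{i\theta}) P_z(e^{i\theta})\,d\theta = \eta_{P_z/2\pi}(g),
\]
so that $\mathrm{ev}_z = \eta_{P_z/2\pi}$ is weakly continuous. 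Thus the identity $(B,\tau_w)\to(B,\tau_p)$ is a continuous bijection from a compact space to a Hausdorff space, and is therefore a homeomorphism.

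\textbf{Conclusion and main obstacle.} Combining the two halves yields $\tau_c = \tau_p = \tau_w$ on $B$, with $B$ compact in this common topology. The main substantive ingredient is the identification of point evaluations with elements of the predual via the Poisson integral representation of $H^\infty$-functions; once this is granted, the rest of the argument is a combination of Montel's Theorem, Banach--Alaoglu, and the compact-to-Hausdorff homeomorphism principle.
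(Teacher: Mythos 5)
Your proposal is correct and follows essentially the same route as the paper: Montel's Theorem plus the compact-to-Hausdorff homeomorphism principle for $\tau_c=\tau_p$, and Banach--Alaoglu together with the weak continuity of point evaluations (realized in the predual via the Poisson kernel) for $\tau_w=\tau_p$. The only cosmetic difference is that you normalize to $\bD$ where the paper normalizes to $\C_+$ for the second half.
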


\begin{prf} By Montel's Theorem, $(B,\tau_c)$ is a compact space. 
Since $\tau_p$ is Hausdorff and $(B,\tau_c) \to (B,\tau_p)$ is continuous, 
the compactness of $(B,\tau_c)$ implies that $\tau_c = \tau_p$. 

To show that $\tau_w = \tau_p$, 
we may w.l.o.g.\ assume that $\Omega = \C_+$. 
For each $z \in \C_+$ and $g \in H^\infty(\C_+)$, we have 
\[ g(z) = \int_\R P_z(x)g^*(x)\, dx,\] 
where $P_z(x) = P(z,x)$ is the Poisson kernel of $\C_+$. As the functions 
$P_z$ are $L^1$, it follows that point evaluations are weakly continuous. 
Therefore the map $(B,\tau_w) \to (B,\tau_p)$ is continuous. 
As $(B,\tau_w)$ is compact by the Banach--Alaoglu Theorem, this map is a homeomorphism, 
and thus $\tau_w = \tau_p$. 
\end{prf}

\begin{rem}  (a) For a $\sigma$-finite measure space 
$(X,\fS,\mu)$, the unique predual of $L^\infty(X,\fS,\mu)$ is 
the space $L^1(X,\fS,\mu)$ (Grothendieck, \cite{Gr55}). However, the space  
$L^\infty(X,\fS,\mu)$ only depends on the measure class $[\mu]$. 
From this perspective, one should think of its predual as the space 
$\{ f \mu \: f \in L^1(X,\fS,\mu)\}$ of all finite measures on 
$(X,\fS)$ which are absolutely continuous with respect to~$\mu$. 

With this observation, it is clear how to identify the predual 
of $H^\infty(\C_+)$ in terms for weighted Hardy spaces. 
In particular, $H^\infty(\C_+)_* \cong L^1(\R,w\, dx)/H^1(\R,w\, dx)$ for any 
positive measurable function $w \: \R \to \R_+$.

\nin (b) (Saks spaces) 
The theory of Saks spaces, i.e., Banach spaces $E$ with an additional 
locally convex topology $\gamma$ satisfying certain compatibility conditions 
is a natural context to deal with similar structures. 
We refer to Cooper's monograph \cite{Co87} for a detailed exposition
of this theory. We shall not need it here. An interesting result 
one finds in \cite[Prop.~V.3.2]{Co87} 
is that the space of continuous homomorphisms $(H^\infty(\bD),\beta) \to \C$,
where $\beta$ is the topology on $H^\infty(\bD)$ defined by the Saks space 
structure, is homeomorphic to $\bD$ (the point evaluations). 
\end{rem}

\begin{prop} The multiplication on $H^\infty(\Omega)$ is separately 
continuous with respect to the weak topology, i.e., the 
multipication maps $m_g(f) = gf$ are weakly continuous. 
Moreover, the involution $\sharp$ is weakly continuous. 
\end{prop}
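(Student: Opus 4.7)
The plan is to reduce to a concrete model via the Riemann mapping theorem and to test weak continuity directly against the explicit predual functionals. By Lemma~\ref{lem:b.0} every antiholomorphic involution on $\Omega$ is conjugate in $\Aut(\Omega)$ to the one obtained by Cayley transport from $\sigma(z) = -\oline z$ on $\C_+$, so the statement reduces to the case $\Omega = \C_+$. There, by Example~\ref{ex:4.1}(a), the predual is $H^\infty(\C_+)_* \cong L^1(\R)/H^1(\C_+)$, and the weak topology is the initial topology generated by the functionals $\eta_f(h) = \int_\R h^*(x) f(x)\, dx$ for $f \in L^1(\R)$.

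\textbf{Multiplication.} Since $H^\infty(\Omega)$ is commutative, separate continuity follows from showing that $m_g$ is weakly continuous for each $g$. Fixing $f \in L^1(\R)$ and using multiplicativity of non-tangential boundary values, $(gh)^* = g^* h^*$ a.e.\ on $\R$, I compute
\[ (\eta_f \circ m_g)(h) = \int_\R g^*(x) h^*(x) f(x)\, dx = \eta_{g^* f}(h). \]
Because $g^* \in L^\infty(\R)$, the product $g^* f$ lies in $L^1(\R)$, so $\eta_{g^* f}$ is a predual functional; hence $\eta_f \circ m_g \in H^\infty(\C_+)_*$ and $m_g$ is weakly continuous.

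\textbf{Involution.} The map $\sharp$ is antilinear, so its weak continuity means that for each predual functional $\eta_f$ the \emph{linear} functional $g \mapsto \oline{\eta_f(g^\sharp)}$ lies in the predual. Using the boundary-value identity $(g^\sharp)^*(x) = \oline{g^*(-x)}$ and the substitution $y = -x$, I get
\[ \oline{\eta_f(g^\sharp)} = \oline{\int_\R \oline{g^*(-x)}\, f(x)\, dx} = \int_\R g^*(y)\, \oline{f(-y)}\, dy = \eta_{\tilde f}(g), \]
where $\tilde f(y) := \oline{f(-y)} \in L^1(\R)$. This exhibits the required predual functional, so $\sharp$ is weakly continuous.

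\textbf{Where the (mild) obstacle lies.} The only point that requires care is that the assignments $f \mapsto g^* f$ and $f \mapsto \tilde f$ descend to the quotient $L^1(\R)/H^1(\C_+)$. For the first this is the well-known inclusion $H^\infty(\C_+) \cdot H^1(\C_+) \subseteq H^1(\C_+)$. For the second one checks that $F \in H^1(\C_+)$ implies $F^\sharp \in H^1(\C_+)$, which is immediate from the fact that $-\oline z \in \C_+$ whenever $z \in \C_+$ makes $F^\sharp$ holomorphic on $\C_+$, together with $|F^\sharp(x+iy)| = |F(-x+iy)|$, giving $\|F^\sharp\|_{H^1} = \|F\|_{H^1}$. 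An alternative, coordinate-free route would invoke Lemma~\ref{lem:unitball} and the Krein--Smulian theorem: both $m_g$ and $\sharp$ are continuous in the topology of pointwise convergence and send bounded sets to bounded sets, so weak continuity on the unit ball propagates to all of $H^\infty(\Omega)$.
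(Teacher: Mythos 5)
Your proposal is correct and follows essentially the same route as the paper: reduce to $\Omega = \C_+$ and verify the identities $\eta_f(gh) = \eta_{g^*f}(h)$ and $\oline{\eta_f(g^\sharp)} = \eta_{f^\sharp}(g)$ against the predual functionals. (The "mild obstacle" about descending to the quotient $L^1(\R)/H^1(\C_+)$ is not actually needed for weak continuity, since it suffices that $\eta_f \circ m_g$ and $g \mapsto \oline{\eta_f(g^\sharp)}$ are of the form $\eta_{f'}$ for some $f' \in L^1(\R)$, which your computations already show.)
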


\begin{prf} It suffices to verify this for $\Omega = \C_+$. In this 
case it follows from 
\[ \eta_f(g^\sharp) = \oline{\eta_{f^\sharp}(g)} \quad \mbox{ and } \quad 
\eta_f(gh) = \eta_{f g^*}(h) \quad \mbox{ for } \quad 
f \in L^1(\R), g,h \in H^\infty(\C_+).\qedhere\]
\end{prf}

\subsection{Subsemigroups spanning weakly dense subalgebras} 
\mlabel{app:b.2}

For $\Omega = \bD$ we have $\sigma(z) = \oline z$, so that 
\begin{equation}
  \label{eq:sharpeq1}
f^\sharp(z) = \oline{f(\oline z)}\quad \mbox{ for } \quad f \in H^\infty(\bD).
\end{equation}
The elements $(z^n)_{n \geq 0}$ define 
a cyclic subsemigroup of $H^\infty(\bD)$ consisting 
of $\sharp$-symmetric elements and, for $f \in L^1(\T)$, we have 
\begin{equation}
  \label{eq:lambdafoutra-disc}
 \eta_f(z^n) = \int_\T e^{int} f(e^{it})\, dt = \hat f(-n) 
\quad \mbox{ for } \quad n \in \N_0.
\end{equation}
For $\Omega = \C_+$ we have $\sigma(z) = - \oline z$, so that 
\begin{equation}
  \label{eq:sharpeq2}
f^\sharp(z) = \oline{f(-\oline z)}\quad \mbox{ for } \quad f \in H^\infty(\C_+).
\end{equation}
The elements $(e_{it})_{t > 0}$ define 
a one-parameter semigroup of $H^\infty(\C_+)$ consisting 
of $\sharp$-symmetric elements and, for $f \in L^1(\R)$, we have 
\begin{equation}
  \label{eq:lambdafoutra}
 \eta_f(e_{it}) = \int_\R e^{itx} f(x)\, dx 
= \hat f(-t) \quad \mbox{ for } \quad t \geq 0.
\end{equation}

\begin{lem} \mlabel{lem:polweakdense} 
{\rm(The Density Lemma)} 
  \begin{itemize}
  \item[\rm(a)] The polynomials $\C[z] \subeq H^\infty(\bD)$ 
are dense with respect to the weak topology. 
  \item[\rm(b)] The one-parameter semigroup 
$(e_{it})_{t > 0}$ spans a weakly dense subspace of $H^\infty(\C_+)$. 
\item[\rm(c)] For the strip 
$\bS_\beta = \{ z \in \C \: 0 < \Im z < \beta\}$, 
the functions $(e_{it})_{t \in \R}$ 
 span a weakly dense subspace of $H^\infty(\bS_\beta)$. 
  \end{itemize}
\end{lem}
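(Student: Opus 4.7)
The approach is Hahn--Banach in each of the three items: the weakly closed linear span of a subset $V \subseteq H^\infty(\Omega)$ equals $H^\infty(\Omega)$ if and only if every element of the predual $H^\infty(\Omega)_*$ that annihilates $V$ is zero. In each case I will read the annihilation condition via the pairing formulas \eqref{eq:lambdafoutra-disc} and \eqref{eq:lambdafoutra}, then identify the resulting functional as the zero class using the explicit preduals from Example~\ref{ex:4.1}.

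For (a), if $f \in L^1(\T)$ satisfies $\eta_f(z^n) = 0$ for every $n \geq 0$, then \eqref{eq:lambdafoutra-disc} gives $\hat f(-n) = 0$ for all $n \geq 0$, i.e.\ $\hat f(k) = 0$ for all $k \leq 0$. This is exactly the characterization of those $L^1(\T)$-functions that arise as $L^1$-boundary values of elements of $H^1_0(\bD)$, so by \eqref{eq:predual-disc} the class $[\eta_f]$ vanishes in the predual. Part (b) is entirely analogous: if $\eta_f(e_{it}) = \hat f(-t) = 0$ for all $t > 0$, then the support of $\hat f$ lies in $[0,\infty)$, and the Paley--Wiener support characterization of $H^1(\C_+)$ places $f$ in $H^1(\C_+)$, so $[\eta_f] = 0$ in $L^1(\R)/H^1(\C_+)$ by \eqref{eq:quot}.

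For (c), the idea is to transport the problem to $\C_+$ via the biholomorphism $\psi \colon \bS_\beta \to \C_+$, $\psi(z) := e^{\pi z/\beta}$. The pullback $\psi^* \colon H^\infty(\C_+) \to H^\infty(\bS_\beta)$, $g \mapsto g \circ \psi$, is an isometric algebra isomorphism; by Ando's Theorem on uniqueness of the predual, the bitranspose $(\psi^*)^{**}$ must carry $H^\infty(\bS_\beta)_*$ bijectively onto $H^\infty(\C_+)_*$, so $\psi^*$ is a homeomorphism for the respective weak topologies. Under $\psi^*$, the function $e_{it}|_{\bS_\beta}$ corresponds to the complex power $w \mapsto w^{it\beta/\pi}$ on $\C_+$ (principal branch). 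Thus (c) reduces to showing that $\{w^{ir} \: r \in \R\}$ is weakly dense in $H^\infty(\C_+)$. Assuming $\eta_f$ vanishes on all $w^{ir}$ and writing out the non-tangential boundary values
\[
(w^{ir})^*(x) = x^{ir}\ \ (x > 0), \qquad (w^{ir})^*(x) = |x|^{ir} e^{-\pi r}\ \ (x < 0),
\]
the substitution $u = \log |x|$ converts the annihilation condition into
\[
\hat h_+(s) + e^{\pi s}\,\hat h_-(s) = 0 \quad \text{for all } s \in \R,
\]
where $h_\pm(u) := e^u f(\pm e^u) \in L^1(\R)$. A Mellin/Paley--Wiener analysis of this bridging identity (the weight $e^{\pi s}$ encodes the strip of width $\pi$ between $\R_+$ and $\R_-$) forces $f$ to be the boundary value of an $H^1(\C_+)$-function, whence $[\eta_f] = 0$ in the predual.

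The computations for (a) and (b) reduce to routine Fourier coefficient bookkeeping against the standard Hardy space characterizations. The main obstacle is Part (c): one has to turn the relation between $\hat h_+$ and $\hat h_-$ (involving the exponential weight $e^{\pi s}$) into membership of $f$ in $H^1(\C_+)$, which requires care with the Mellin substitution and Paley--Wiener across the two half-lines. Everything else, in particular the transport of weak topologies along $\psi^*$, is formal once Ando's uniqueness is invoked.
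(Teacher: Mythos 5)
Parts (a) and (b) of your proposal are correct and essentially coincide with the paper's argument: in both cases the annihilation of the generating set kills the non-positive part of the Fourier spectrum of $f$, which identifies $f$ with an element of $H^1_0(\bD)$, resp.\ $H^1(\C_+)$, hence with the zero class in the predual. (For (b) the paper reaches $f\in H^1(\C_+)$ by showing that the Cauchy integral of $f$ vanishes on the lower half-plane and quoting \cite[Thm.~5.19(ii)]{RR94}, rather than invoking the support characterization directly, but this is the same substance.)

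Part (c), however, has a genuine gap. Your conformal transport $\psi(z)=e^{\pi z/\beta}$ followed by the substitution $u=\log|x|$ is exactly the inverse of the exponential map from the strip to the half-plane: the two half-lines of $\partial\C_+$ become the two boundary lines of a strip of width $\pi$, and the ``bridging identity'' $\hat h_+(s)+e^{\pi s}\hat h_-(s)=0$ you arrive at is, up to normalization, precisely the relation $\hat f_1=-e_{-\beta}\hat f_0$ in \eqref{eq:bwrel} from which the paper's proof of (c) \emph{starts}. The reduction therefore goes in a circle, and the entire analytic content of part (c) is compressed into the unproven sentence that ``a Mellin/Paley--Wiener analysis of this bridging identity forces $f$ to be the boundary value of an $H^1(\C_+)$-function.'' That step is not routine: in the paper it requires mollifying with Gaussians so that the translates $\alpha_s(\delta_n*g_i)$ extend to entire $L^1(\R)$-valued maps, using injectivity of the Fourier transform to convert \eqref{eq:bwrel} into the pointwise identity $\delta_n*g_1=-\alpha_{\beta i}(\delta_n*g_0)$, and then a constancy argument for a bounded holomorphic function on the closed strip whose lower boundary values are constant. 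Until you supply an argument of this type (or a genuine two-sided Paley--Wiener theorem for the strip), part (c) is not proved. Your transport of the weak topology along $\psi^*$ via Ando's uniqueness of the predual is legitimate, but it only relocates the difficulty; it does not resolve it.
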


\begin{prf} (a)  (\cite[Prop.~V.2.2]{Co87}) 
If $f \in L^1(\T)$ is such that $\eta_f$ vanishes on all polynomials, 
then all negative Fourier coefficients of $f$ vanish:
\[ \hat f(-n) = \int_0^{2\pi} f(e^{it}) e^{int}\, dt = 0 \quad \mbox{ for } \quad 
n > 0,\] 
and \cite[Ch.~17, Ex.~2.9]{Ru86} 
implies that $f \in H^1(\bD)$. 
Now the vanishing of $\eta_f$ follows from 
\[ \eta_f(h) 
= \int_{\T} f(e^{it}) h(e^{it})\, dt 
= 2\pi (fh)(0) = 2 \pi f(0) h(0) = 0, \] 
because $2\pi f(0) = \eta_f(1) = 0$. 

\nin (b) We have to show that, if a functional 
$\eta_f$, $f \in L^1(\R)$, vanishes on each $e_{it}, t > 0$, then 
$\eta_f = 0$. So suppose that $\eta_f(e_{it}) = \hat f(-t) = 0$ for $t > 0$. 
We claim that this implies that 
\begin{equation}
  \label{eq:cauchvan}
  \int_\R \frac{f(t)}{t-z}\, dt = 0 \quad \mbox{ for } \quad 
\Im z < 0.
\end{equation}
In fact, for $\Im z < 0$, we have 
\[ \frac{1}{t-z} = -i \int_0^\infty e^{itx} e^{-ixz}\, dx. \] 
We thus obtain 
\[ \int_\R \frac{f(t)}{t-z}\, dt 
= -i \int_0^\infty \int_\R e^{itx} e^{-ixz} f(t)\, dt\, dx 
= -i \int_0^\infty \hat f(-x)e^{-ixz}\, dx = 0.\] 
In view of \cite[Thm.~5.19(ii)]{RR94}, 
\eqref{eq:cauchvan} implies that $f \in H^1(\C_+)$ in the  sense 
that $f$ is the boundary value of an $H^1$-function on $\C_+$.  

We now show that this implies $\eta_f = 0$.
In fact, for $h \in H^\infty(\C_+)$, we obtain 
\[ \eta_f(h) = \int_\R f(x)h(x)\, dx = 0\] 
because the function $fh \in H^1(\C_+)$ has a continuous Fourier transform 
vanishing on $\R_-$, hence also in $0$. 
\begin{footnote}
{In \cite[Ex.~12, p.~115]{RR94} one finds the interesting characterization 
that, for $1 \leq p,q \leq \infty$ and $p^{-1} + q^{-1} = 1$, a 
function $f \in L^p(\R)$ is contained in $H^p(\C_+)$ if and only if 
$\eta_f$ vanishes on $H^q(\C_+)$.}
\end{footnote}

\nin (c) Let $f = (f_0, f_1) \in L^1(\R) \oplus L^1(\R)$ be such 
that $\eta_f(e_{it}) = 0$ for every $t \in\R$. These numbers evaluate to 
\begin{align*}
\eta_f(e_{it}) 
&= \int_{\R} e^{itx} f_0(x)\, dx + \int_{\R} e^{it(x + i\beta)} f_1(x)\, dx 
= \int_{\R} e^{itx} f_0(x)\, dx + e^{-t\beta} \int_{\R} e^{itx} f_1(x)\, dx
\\ 
&= \hat{f_0}(-t) + e^{-t\beta} \hat{f_1}(-t).
\end{align*}
We thus arrive at the relation 
\begin{equation}
  \label{eq:bwrel}
 \hat{f_1}(t) = - e^{-t\beta} \hat{f_0}(t) \quad \mbox{ for } \quad t \in \R, 
\quad \mbox{ resp.} \quad \hat{f_1} = - e_{-\beta} \hat{f_0}.
\end{equation}

Let $\cE \subeq L^1(\R) \times L^1(\R)$ be the closed linear subspace of all pairs 
$(g_0, g_1)$ satisfying $\hat g_1 = - e_{-\beta} \hat g_0$. This is a closed 
subspace invariant under the translation action 
$\alpha_s(g) = g(\cdot + s)$. In the Banach algebra 
$L^1(\R)$, we consider the approximate identity 
\[ \delta_n(x) := \frac{n}{\sqrt{2\pi}} e^{-\frac{n^2x^2}{2}}.\] 
The pairs $(\alpha_s(\delta_n*g_0), \alpha_s(\delta_n*g_1))$ for $n\in\N$ and $(g_0,g_1)\in\cE$ extend to pairs of holomorphic maps $\C \to L^1(\R)$, given concretely by the functions 
\[ \alpha_z(\delta_n*g_i)(x) = \delta_n*g_i(x+z):= \frac{n}{\sqrt{2\pi}} \int_{\R}e^{-\frac{n(x+z-t)^2}{2}}g_i(t)dt
\quad \mbox{ for } \quad  z \in \C,\;i=0,1.\] 
For any such pair, the Fourier transform of $\alpha_z(\delta_n*g_0)=\delta_n*g_0(\cdot + z)$ is $e_{iz} \hat{\delta_n}\hat{g_0}$. 
For $z = \beta i$, this function coincides with $-\hat{\delta_n}\hat{g_1}$, so that the injectivity 
of the Fourier transform leads to 
\[ \delta_n*g_1 = - \alpha_{\beta i}(\delta_n*g_0).\]
As $\lim_{n \to \infty} \delta_n * h  = h$ for any $h \in L^1(\R)$, it follows that the set of pairs $(\delta_n*g_0,\delta_n*g_1)$ is dense in $\cE$.

Let $h \in H^\infty(\bS_\beta)$. Then 
$h_z(x) := h(z + x)$ defines a bounded weakly holomorphic family in $L^\infty(\R)$, i.e., the function $\bS_\beta\ni z\mapsto\eta_f(h_z)\in\C$ is holomorphic for every $f\in L^1(\R)$.
For $\delta_n*g = (\delta_n*g_0, \delta_n*g_1) \in \cE$ as above, the function 
\[ \gamma \: \bS_\beta \to \C, \quad 
\gamma(z) := \int_\R \alpha_z(\delta_n*g_0)(x) h(z + x)\, dx \] 
is holomorphic and bounded on $\bS_\beta$ and extends continuously to the closed strip. 
Moreover, its lower boundary values are constant because of the translation 
invariance of Lebesgue measure. Therefore $\gamma$ is constant and we obtain 
in particular 
\begin{align*}
\int_\R \delta_n*g_0(x) h^*(x)\, dx 
&= \gamma(0) = \gamma(\beta i) 
= \int_\R \alpha_{\beta i}(\delta_n*g_0)(x) h^*(\beta i + x)\, dx 
\\&= -\int_\R \delta_n*g_1(x) h^*(\beta i + x)\, dx,
\end{align*}
which means that $\eta_{\delta_n*g}(h) = 0$. 
With the density argument from above, this entails that 
$\eta_f(h) = 0$ for each $h \in H^\infty(\bS_\beta)$, and $f\in\cE$.
This proves that the functions $(e_{it})_{t \in \R}$ span a weakly 
dense subspace of $H^\infty(\bS_\beta)$.
\end{prf}

\subsection{Weakly continuous positive functionals} 
\mlabel{app:b.3} 

The compact space of  characters of the commutative 
Banach algebra $H^\infty(\Omega)$ is a complicated space 
in which the evaluation functionals 
$\delta_z(f) = f(z)$, $z \in \Omega$, are dense by 
Carleson's Corona Theorem (\cite{Ca62}). 
These characters satisfy $\delta_z^\sharp(f) := \oline{f^\sharp(z)}
= \delta_{\sigma(z)}(f)$, so that $\delta_z^\sharp = \delta_z$ is equivalent to 
$z \in \Omega^\sigma$. 
The following proposition shows that this construction 
exhausts the set of weakly continuous $*$-characters. 

\begin{prop} 
\mlabel{prop:4.5} 
The weakly continuous $*$-homomorphisms 
$(H^\infty(\Omega),\sharp) \to \C$ are the maps 
\[ \delta_\lambda(f) := f(\lambda) \quad \mbox{ for } \quad 
\lambda \in \Omega^\sigma = \{ z \in \Omega \:  \sigma(z) = z\}.\] 
\end{prop}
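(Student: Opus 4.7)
\textbf{Easy direction first.} For $\lambda \in \Omega^\sigma$ the evaluation $\delta_\lambda$ is a unital algebra homomorphism and satisfies $\delta_\lambda(f^\sharp) = \overline{f(\sigma(\lambda))} = \overline{f(\lambda)} = \overline{\delta_\lambda(f)}$, so it is a $*$-homomorphism. Weak continuity follows exactly as in the proof of Lemma~\ref{lem:unitball}: after passing to $\C_+$ by a biholomorphism, point evaluation at $\lambda$ is integration against the Poisson kernel $P_\lambda \in L^1(\R)$, hence lies in the predual $H^\infty(\C_+)_*$.

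\textbf{Reduction for the converse.} By the Riemann Mapping Theorem together with Lemma~\ref{lem:b.0}, we may assume $\Omega = \C_+$ with $\sigma(z) = -\bar z$, so that $\Omega^\sigma = i\R_+$. Let $\chi\colon H^\infty(\C_+) \to \C$ be a nonzero weakly continuous $*$-homomorphism and define
\[ \phi\colon \R_+ \to \C, \quad \phi(t) := \chi(e_{it}). \]
The plan is to show $\phi(t) = e^{-\lambda t}$ for a unique $\lambda > 0$, and then to invoke the Density Lemma~\ref{lem:polweakdense}(b) to conclude $\chi = \delta_{i\lambda}$.

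\textbf{Properties of $\phi$.} Multiplicativity of $\chi$ gives $\phi(t+s) = \phi(t)\phi(s)$, and $\phi(0) = 1$. Boundedness $|\phi(t)| \leq \|\chi\|$ follows from $\|e_{it}\|_\infty \leq 1$. The identity $e_{it}^\sharp(z) = \overline{e^{it(-\bar z)}} = e^{itz}$ shows $e_{it}^\sharp = e_{it}$, so $\phi(t) = \overline{\chi(e_{it}^\sharp)}\cdot\overline{\phantom{.}} = \overline{\phi(t)}$, hence $\phi(t) \in \R$. Writing $e_{it} = e_{it/2}^\sharp \cdot e_{it/2}$ gives $\phi(t) = |\chi(e_{it/2})|^2 \geq 0$. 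For continuity and decay at infinity, note that for any $f \in L^1(\R)$ the function $t \mapsto \eta_f(e_{it}) = \hat f(-t)$ is continuous on $\R$ and vanishes at infinity by Riemann--Lebesgue; thus $e_{it} \to 0$ weakly as $t \to \infty$ and $e_{it} \to e_{it_0}$ weakly as $t \to t_0$, so weak continuity of $\chi$ gives $\phi \in C(\R_+)$ with $\phi(t) \to 0$ as $t \to \infty$. A continuous, positive, multiplicative function on $\R_+$ is of the form $\phi(t) = e^{-\lambda t}$, and the decay forces $\lambda > 0$.

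\textbf{Conclusion.} Setting $z_0 := i\lambda \in i\R_+ = \C_+^\sigma$, we have $\delta_{z_0}(e_{it}) = e^{itz_0} = e^{-\lambda t} = \phi(t) = \chi(e_{it})$ for all $t > 0$. Since both $\chi$ and $\delta_{z_0}$ are weakly continuous linear functionals and the semigroup $\{e_{it} : t > 0\}$ spans a weakly dense subspace of $H^\infty(\C_+)$ by Lemma~\ref{lem:polweakdense}(b), we conclude $\chi = \delta_{z_0}$.

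\textbf{Main obstacle.} The nontrivial part is verifying that the multiplicative function $\phi$ inherits continuity \emph{and} decay from the weak continuity of $\chi$; once this is in hand, the exponential form and the density lemma finish the argument. Care is also needed to transport the identification of $\Omega^\sigma$ across the Cayley transform, but this is purely bookkeeping once Lemma~\ref{lem:b.0} is invoked.
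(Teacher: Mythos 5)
Your proposal is correct and follows essentially the same route as the paper: reduce to $\C_+$ via Lemma~\ref{lem:b.0}, restrict $\chi$ to the weakly dense involutive semigroup $(e_{it})_{t>0}$ to obtain a continuous nonnegative multiplicative function $t\mapsto e^{-\lambda t}$, use the Riemann--Lebesgue lemma to force $\lambda>0$, and conclude by the Density Lemma~\ref{lem:polweakdense}(b). You merely spell out some details the paper compresses (reality and nonnegativity of $\phi$ via $e_{it}=e_{it/2}^\sharp e_{it/2}$, and continuity of $\phi$ from weak continuity of $\chi$), which is fine.
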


\begin{prf} In view of the Riemann Mapping Theorem, we may w.l.o.g.\ 
assume that $\Omega = \C_+$ with $\sigma(z) = -\oline z$ 
(Lemma~\ref{lem:b.0}). 
Clearly, each $\delta_\lambda$ defines a weakly continuous $*$-homomorphism. 
Suppose, conversely, that $\chi \: (H^\infty(\C_+),\sharp) \to \C$ 
is a weakly continuous unital $*$-homomorphism. 
As $(e_{it})_{t > 0}$ is an involutive subsemigroup 
spanning a weakly dense subspace, $\chi$ is 
uniquely determined by its values on this semigroup. 
This defines a continuous non-zero homomorphism 
\[ \R_+ \to ([0,1],\cdot), \quad 
t \mapsto \chi(e_{it}), \] 
hence is of the form 
$t \mapsto e^{-t\lambda} = e_{it}(i\lambda)$ for some $\lambda \geq 0$. 
Writing $\chi = \eta_f$ for some $f \in L^1(\R)$, we see that 
$\eta_f(e_{it}) = \hat f(-t)$ tends to $0$ for $t \to \infty$ 
(Riemann--Lebesgue Lemma), 
so that we must have $\lambda >0$ and thus $\chi = \delta_{i\lambda}$. 
%
%\nin {\bf Alternative argument for the disc:} 
%If $\chi \: (H^\infty(\bD),\sharp) \to \C$ 
%is a weakly continuous unital $*$-homomorphism, 
%then it is bounded on the unit ball, which is weakly compact. 
%Therefore $\lambda := \chi(z) \in [-1,1]$ and 
%$\chi(F) = F(\lambda)$ for every polynomial $F \in \C[z]$. 
%As $\C[z] \subeq H^\infty(\bD)$ is weakly dense, $\chi$ is uniquely determined %by $\lambda$. 
%Representing $\chi$ as $\eta_f$ for some $f \in L^1(\T)$, it follows that 
%\[ \lambda^n= \chi(z^n) = \hat f(-n) \to 0 \] 
%by the abstract Riemann--Lebesgue Lemma. This excludes the cases 
%$\lambda = \pm 1$ and therefore $\lambda \in\bD^\sigma = (-1,1)$.
\end{prf}

\begin{prop} \mlabel{prop:4.6} 
The weakly continuous positive functionals 
$(H^\infty(\Omega),\sharp) \to \C$ are the maps 
\[ \eta_\mu(f) := \int_{\Omega^\sigma} f(\lambda)\, d\mu(\lambda),\] 
where $\mu$ is a finite positive Borel measure on $\Omega^\sigma$. 
\end{prop}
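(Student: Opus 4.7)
The plan is to reduce to the two standard models via the Riemann Mapping Theorem and Lemma~\ref{lem:b.0}: $\Omega = \C_+$ with $\sigma(z) = -\oline z$ (so $\Omega^\sigma = i\R_+$), and $\Omega = \bD$ with $\sigma(z) = \oline z$ (so $\Omega^\sigma = (-1,1)$). The two arguments are parallel, so I will focus on $\Omega = \C_+$; the disc version is the same with the cyclic semigroup $(z^n)_{n \geq 0}$ and Hamburger's Theorem replacing $(e_{it})_{t > 0}$ and Hausdorff--Bernstein--Widder, using the Riemann--Lebesgue Lemma on $\T$ to kill atoms at $\pm 1$.

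For the easy direction, any $\eta_\mu$ is positive because $\sigma$ fixes $\Omega^\sigma$ pointwise, giving $\eta_\mu(f^\sharp f) = \int_{\Omega^\sigma} |f(\lambda)|^2\, d\mu(\lambda) \geq 0$. For weak continuity I would use the Poisson representation $f(i\lambda) = \int_\R P_{i\lambda}(x)f^*(x)\,dx$ together with Fubini (justified since $\|P_{i\lambda}\|_{L^1(\R)} = 1$ and $\mu(\Omega^\sigma) < \infty$) to write $\eta_\mu = \eta_h$ with $h(x) := \int_{\R_+} P_{i\lambda}(x)\,d\mu(i\lambda) \in L^1(\R)$, so that $\eta_\mu$ corresponds to an element of the predual described in Example~\ref{ex:4.1}(a).

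For the converse, let $\phi$ be weakly continuous and positive, and choose a representative $f \in L^1(\R)$ with $\phi = \eta_f$ via \eqref{eq:quot}. I restrict $\phi$ to the involutive subsemigroup $(e_{it})_{t > 0}$ of $\sharp$-symmetric elements and set $\varphi(t) := \phi(e_{it})$. Positivity of $\phi$ forces the kernel $(s,t) \mapsto \varphi(s+t)$ to be positive definite on $\R_+$, and by \eqref{eq:lambdafoutra}, $\varphi(t) = \hat f(-t)$, which is continuous, bounded, and tends to $0$ at infinity by Riemann--Lebesgue. The Hausdorff--Bernstein--Widder Theorem (\cite[Thm.~6.5.12]{BCR84}) then produces a unique finite positive Borel measure $\mu$ on $[0,\infty)$ with $\varphi(t) = \int_{[0,\infty)} e^{-\lambda t}\,d\mu(\lambda)$ for $t > 0$; dominated convergence as $t \to \infty$ forces $\mu(\{0\}) = 0$, so $\mu$ may be viewed as a finite positive Borel measure on $\Omega^\sigma = i\R_+$ under $\lambda \mapsto i\lambda$.

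Finally I would conclude by density: both $\phi$ and $\eta_\mu$ are weakly continuous and agree on the family $(e_{it})_{t > 0}$, which by Lemma~\ref{lem:polweakdense}(b) spans a weakly dense subspace of $H^\infty(\C_+)$, so $\phi = \eta_\mu$. The only delicate step in the whole scheme is the passage from a bounded continuous positive definite function on $(\R_+,+,\id)$ to its integral representation on $[0,\infty)$, which is exactly the content of Hausdorff--Bernstein--Widder; after that, weak density does all the work, and the disc case is formally identical.
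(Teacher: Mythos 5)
Your proposal is correct and follows essentially the same route as the paper: Poisson-kernel representation plus Fubini to produce an $L^1$ symbol $\psi_\mu$ for the easy direction, and for the converse the restriction to the $\sharp$-symmetric semigroup $(e_{it})_{t>0}$, Hausdorff--Bernstein--Widder, the Riemann--Lebesgue Lemma to kill the atom at $0$, and weak density via Lemma~\ref{lem:polweakdense}(b). The only superfluous element is your parallel treatment of the disc: since Lemma~\ref{lem:b.0} makes all the Banach $*$-algebras $(H^\infty(\Omega),\sharp)$ isomorphic, reducing once to $\C_+$ already covers every $\Omega$, as the paper does.
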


\begin{prf} Again, we may w.l.o.g.\ assume that $\Omega = \C_+$ with 
$\sigma(z) = - \oline z$. 
For the elements $e_{it}$, $t > 0$, of $H^\infty(\C_+)$, 
the reproducing property of the Poisson kernel 
\[  P(z,x) = P_z(x) = \frac{1}{\pi} \frac{\Im z}{|z-x|^2} \] 
(see \eqref{eq:poissker} below) leads to 
\begin{equation}
  \label{eq:repofu}
e^{-t\lambda} = e_{it}(i\lambda) 
= \frac{1}{\pi} \int_\R e^{it x} \frac{\lambda}{\lambda^2 + x^2}\, dx.
\end{equation}
%This implies that, for every $t \in \R$, we have 
%\[e^{-|t|\lambda} 
%= \frac{1}{\pi} \int_\R e^{it x} \frac{\lambda}{\lambda^2 + x^2}\, dx.\] 
If $\mu$ is a finite positive measure on $\R_+ = (0,\infty)$, then 
\eqref{eq:repofu} shows that the function 
\begin{equation}
  \label{eq:psimu}
 \psi_\mu(x) 
:= \frac{1}{\pi} \int_0^\infty 
\frac{\lambda}{\lambda^2 + x^2}\, d\mu(\lambda)
= \int_0^\infty P(i\lambda,x)\, d\mu(\lambda)
\end{equation}
is $L^1$ on $\R$ with total integral $\mu((0,\infty))$. 
For $f \in H^\infty(\C_+)$, we obtain 
\begin{equation}
  \label{eq:mueval}
\eta_{\psi_\mu}(f) 
= \int_\R \psi_\mu(x) f^*(x)\, dx 
= \int_0^\infty \Big(\int_\R P_{i\lambda}(x)f^*(x)\, dx\Big)\, d\mu(\lambda) 
= \int_0^\infty f(i\lambda)\, d\mu(\lambda).
\end{equation}
Therefore all functionals $\eta_{\mu}$ are weakly continuous. 
That they are positive follows from 
\[ \eta_{\psi_\mu}(f^\sharp f) 
= \int_0^\infty \oline{f(i\lambda)} f(i\lambda)\, d\mu(\lambda).
= \int_0^\infty |f(i\lambda)|^2\, d\mu(\lambda).\]

Suppose, conversely, that $\eta_f \: H^\infty(\C_+) \to \R$ 
is a weakly continuous positive functional. As 
the semigroup $(e_{it})_{t \geq 0}$ spans a weakly dense subspace, $\eta_f$ 
is determined uniquely by its restriction to this semigroup, 
on which it defines a continuous bounded positive definite function. 
All these functions are Laplace transforms 
$\cL(\mu)$ of a finite positive Borel measure on $[0,\infty)$ 
by the  
Hausdorff--Bernstein--Widder Theorem (\cite[Thm.~6.5.12]{BCR84}), 
so that it remains to show that $\mu(\{0\}) = 0$. This follows from 
\[ 0 
= \lim_{t \to \infty} \hat f(-t) 
= \lim_{t \to \infty} \eta_f(e_{it}) 
= \lim_{t \to \infty} \cL(\mu)(t) 
= \mu(\{0\}) +  \lim_{t \to \infty} \int_0^\infty e^{-t\lambda}\, d\mu(\lambda)
= \mu(\{0\}).\qedhere \] 
\end{prf}

\subsection{Weakly continuous representations} 
\mlabel{app:b.4} 

\begin{prop} \mlabel{prop:4.9} For a $*$-representation $(\pi,\cH)$ of 
the Banach $*$-algebra $(H^\infty(\Omega), \sharp)$, the following are equivalent: 
\begin{itemize}
\item[\rm(a)] For every trace class operator $A \in B_1(\cH)$, the functional 
$\pi^A(f) := \tr(A \pi(f))$ is weakly continuous. 
\item[\rm(b)] For every $\xi \in \cH$, the matrix coefficient 
$\pi^\xi(f) := \la \xi, \pi(f) \xi \ra$ is weakly continuous. 
\item[\rm(c)] There exists a dense subspace $\cD \subeq \cH$ such that, 
for every $\xi \in \cD$, the matrix coefficient 
$\pi^\xi(f) := \la \xi, \pi(f) \xi \ra$ is weakly continuous. 
\end{itemize}
\end{prop}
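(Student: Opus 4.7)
The plan is to prove the cycle (a) $\Rightarrow$ (b) $\Rightarrow$ (c) $\Rightarrow$ (b) $\Rightarrow$ (a). Two of these implications are immediate: (b) $\Rightarrow$ (c) is trivial (take $\cD = \cH$), and (a) $\Rightarrow$ (b) follows by applying (a) to the rank-one trace class operator $A_\xi \eta := \la\xi,\eta\ra\xi$, for which $\tr(A_\xi \pi(f)) = \pi^\xi(f)$. The two substantial implications (c) $\Rightarrow$ (b) and (b) $\Rightarrow$ (a) both rest on the same two technical ingredients, which I would establish first.

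The first ingredient is that every $*$-representation $\pi$ of $(H^\infty(\Omega),\sharp)$ on a Hilbert space is contractive, i.e.\ $\|\pi(f)\| \leq \|f\|_\infty$. For $\sharp$-selfadjoint $f$ the operator $\pi(f)$ is selfadjoint, so $\|\pi(f)\| = r(\pi(f)) \leq r(f) \leq \|f\|_\infty$ (the last inequality uses that the spectral radius in $H^\infty(\Omega)$ is bounded by the supremum norm, which follows from $\|f^n\|_\infty = \|f\|_\infty^n$). The $C^*$-identity $\|\pi(f)\|^2 = \|\pi(f^\sharp f)\|$ then extends the bound to arbitrary $f$. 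The second ingredient is the Banach--Dieudonn\'e/Krein--Smulian consequence of Ando's theorem recorded in Section~\ref{app:b.1}: since $H^\infty(\Omega)$ is a dual Banach space, a linear functional on it is weakly continuous iff its restriction to the closed unit ball $B$ is weakly continuous, which by Lemma~\ref{lem:unitball} is the same as continuity on $(B,\tau_p)$.

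For (c) $\Rightarrow$ (b), given $\xi \in \cH$ pick $\xi_n \in \cD$ with $\xi_n \to \xi$. Contractivity and Cauchy--Schwarz give
\[ |\pi^{\xi_n}(f) - \pi^\xi(f)| \leq (\|\xi_n\| + \|\xi\|)\|\xi_n - \xi\|\,\|f\|_\infty, \]
so $\pi^{\xi_n}|_B \to \pi^\xi|_B$ uniformly on $B$. Each $\pi^{\xi_n}|_B$ is weakly continuous by hypothesis, hence so is the uniform limit $\pi^\xi|_B$, and the Krein--Smulian ingredient promotes this to weak continuity on all of $H^\infty(\Omega)$.

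For (b) $\Rightarrow$ (a), I would first use polarization to upgrade (b) to weak continuity of every sesquilinear matrix coefficient $f \mapsto \la\xi,\pi(f)\eta\ra$, $\xi,\eta \in \cH$. For $A \in B_1(\cH)$ with Schatten decomposition $A = \sum_n s_n|\eta_n\ra\la\xi_n|$ satisfying $\sum_n s_n = \|A\|_1 < \infty$, the series
\[ \pi^A(f) = \sum_n s_n\la\xi_n,\pi(f)\eta_n\ra \]
is dominated on $B$ by $\sum_n s_n \|f\|_\infty \leq \|A\|_1$ thanks to contractivity, so it converges uniformly on $B$. Its partial sums are weakly continuous on $B$, hence so is $\pi^A|_B$, and the Krein--Smulian ingredient finishes. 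The main obstacle is thus purely functional-analytic: transferring continuity from the compact unit ball to weak continuity on the whole space via Krein--Smulian, together with the verification that $\pi$ is contractive so that all the approximation and series arguments converge uniformly on~$B$. Once these two technical facts are in place, the representation-theoretic content reduces to polarization and Schatten decomposition.
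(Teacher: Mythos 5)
Your proof is correct, and its skeleton (establish that $\pi$ is contractive, then pass to limits of weakly continuous functionals) is the same as the paper's, but both technical ingredients are obtained by different means. For contractivity, the paper does not use the general spectral-radius argument for $*$-representations of Banach $*$-algebras with isometric involution; instead it deduces $\|\pi\|\leq 1$ directly from hypothesis (c), since each $\pi^\xi$, $\xi\in\cD$, is a weakly continuous \emph{positive} functional and hence satisfies $|\pi^\xi(f)|\leq \pi^\xi(\1)\|f\|$ by Dixmier's bound. Your route is more self-contained and yields contractivity unconditionally, which is a mild bonus. For the limit-passing step, the paper argues in one stroke ((c) $\Rightarrow$ (a)): the predual $H^\infty(\Omega)_*$ is norm-closed in $H^\infty(\Omega)^*$ (Example~\ref{ex:4.1}), so the weakly continuous functionals form a norm-closed subspace of the dual; since $\pi^*\:B_1(\cH)\to H^\infty(\Omega)^*$ is a norm-continuous linear map sending the norm-dense span of the rank-one operators $P_{\xi,\eta}$, $\xi,\eta\in\cD$, into that subspace (by polarization), it sends all of $B_1(\cH)$ into it. You instead pass through uniform convergence on the unit ball $B$ and then invoke the Krein--Smulian/Banach--Dieudonn\'e criterion that a linear functional on a dual space is weak-$*$ continuous iff its restriction to $B$ is. That criterion is valid here, but note two things: it is not actually ``recorded in Section~\ref{app:b.1}'' of the paper (what is recorded there is Lemma~\ref{lem:unitball} and the norm-closedness of the predual), and it is unnecessary, because your own estimates show that the approximating functionals converge to the target \emph{in norm} as elements of $H^\infty(\Omega)^*$, so the norm-closedness of the predual already finishes both (c) $\Rightarrow$ (b) and (b) $\Rightarrow$ (a) without any appeal to Krein--Smulian.
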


\begin{prf} Clearly (a) $\Rarrow$ (b) $\Rarrow$ (c). 
It remains to shows that (c) implies (a). 
To this end, assume (c). Then each $\pi^\xi$, $\xi \in \cD$, is a 
weakly continuous positive functional, hence satisfies 
\[ |\pi^\xi(f)| \leq  \pi^\xi(\1) \|f\| = \|\xi\|^2 \|f\| \] 
by \cite[Prop.~2.1.4]{Dix64}. This implies in particular that 
\[  \|\pi(f)\xi\|^2 = |\pi^\xi(f^\sharp f)| 
\leq  \|\xi\|^2 \|f^\sharp f\|  \leq  \|\xi\|^2 \|f\|^2, \] 
so that $\|\pi\| \leq 1$. We now consider the map 
\[ \pi^* \: B_1(\cH) \to H^\infty(\Omega)^*, \quad 
\pi^*(A)(f) := \tr(A \pi(f)) \] 
which is a linear contraction. As the map 
\[ \cH \times \cH \to B_1(\cH), \quad (\xi, \eta) \mapsto P_{\xi,\eta}, \quad 
P_{\xi,\eta}(v) := \la \xi, v \ra \eta \] 
is sesquilinear and continuous, the Polarization Identity implies 
that $\pi^*(\{ P_{\xi,\eta} \: \xi,\eta \in \cD\})$ 
consists of weakly continuous functionals. 
Now (a) follows from the norm-closedness of the predual 
$H^\infty(\Omega)_*$ in $H^\infty(\Omega)^*$ (cf.~Example~\ref{ex:4.1}(a)), 
the norm-continuity of $\pi^*$, and the density 
of the span of $P_{\xi,\eta}$, $\xi, \eta \in \cD$, in $B_1(\cH)$. 
\end{prf}

\begin{defn} Representations of the $*$-algebra $(H^\infty(\Omega),\sharp)$ 
satisfying the equivalent conditions 
in Proposition~\ref{prop:4.9} are called {\it weakly continuous}. 
\end{defn}

From weakly continuous positive functionals, we 
obtain weakly continuous cyclic $*$-rep\-resen\-tations 
of $(H^\infty(\Omega),\sharp)$ by Proposition~\ref{prop:4.9}(c). 
Another source of such representations 
are positive Hankel operators for the multiplication representation 
of $(H^\infty(\Omega),\sharp)$ on $H^2(\Omega)$ 
(Proposition~\ref{prop:1.6}). 
For the sake of easier reference, we formulate 
the corresponding result for $\C_+$ explicitly:

\begin{prop} \mlabel{prop:4.11} Suppose that $H^2(\C_+)$ is $\theta_h$-positive 
for the operator $\theta_h(f)(x) = h(x)f(-x)$ 
and $h \in L^\infty(\R)$ satisfying $h^\sharp = h$. 
Let $\cH$ denote the Hilbert space defined by the positive semidefinite 
form $\la f,g \ra_h := \la f, \theta_h g \ra$ on $H^2(\C_+)$ 
and write $q \: H^2(\C_+)  \to \cH$ for the natural map 
with dense range. Then there exists a 
$*$-representation $(\pi, \cH)$ of the Banach $*$-algebra 
$(H^\infty(\C_+),\sharp)$ which is uniquely determined by the relation 
\begin{equation}
  \label{eq:ostrafo}
q(fg) = \pi(f)q(g)  \quad \mbox{ for } 
\quad f \in H^\infty(\C_+), g \in H^2(\C_+).
\end{equation}
\end{prop}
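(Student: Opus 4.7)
The plan is to recognize this as a direct corollary of Proposition~\ref{prop:1.6}, applied to the involutive semigroup $S = (H^\infty(\C_+), \cdot, \sharp)$ represented by multiplication operators on $\cF = H^2(\C_+)$, with the positive Hankel operator arising from $\theta_h$.

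First I would identify the scalar product. For $f, g \in H^2(\C_+)$, using $P_+^* g = g$ (viewing $g$ as an element of $L^2(\R)$), we have
\[ \la f, g \ra_h = \la f, \theta_h g \ra_{L^2(\R)} = \la f, P_+ \theta_h P_+^* g \ra_{H^2(\C_+)} = \la f, H_h g \ra_{H^2(\C_+)}, \]
so the form $\la \cdot,\cdot\ra_h$ on $H^2(\C_+)$ is nothing but the form associated with the operator $H_h = P_+ \theta_h P_+^* \in B(H^2(\C_+))$ from Definition~\ref{def:3.3}. The hypothesis $h^\sharp = h$ makes $\theta_h$ self-adjoint, and the $\theta_h$-positivity of $H^2(\C_+)$ says precisely that $H_h \geq 0$.

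Next I would invoke Theorem~\ref{thm:3.5}, specifically the equivalence (a) $\Leftrightarrow$ (b), to conclude that $H_h$ satisfies the Hankel relation $H_h \, m_g = m_{g^\sharp}^* H_h$ for every $g \in H^\infty(\C_+)$. Thus $H_h$ is a $U_+$-Hankel operator for the representation $U_+ \: (H^\infty(\C_+), \sharp) \to B(H^2(\C_+))$, $U_+(g) := m_g$, of the involutive semigroup $(H^\infty(\C_+), \sharp)$ by bounded multiplication operators on $H^2(\C_+)$.

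Now Proposition~\ref{prop:1.6} applies verbatim: there exists a uniquely determined $*$-representation $\pi := \hat U_+$ of $(H^\infty(\C_+), \sharp)$ on the Hilbert space $\cH = \hat{H^2(\C_+)}$ satisfying $\pi(f) \circ q = q \circ m_f$, which is exactly the intertwining relation \eqref{eq:ostrafo}. Moreover, the norm estimate from Proposition~\ref{prop:1.6} gives $\|\pi(f)\| \leq \max(\|m_f\|, \|m_{f^\sharp}\|) = \|f\|_\infty$, so $\pi$ is automatically a contractive (in particular norm-continuous) $*$-representation of the Banach $*$-algebra $(H^\infty(\C_+),\sharp)$. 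Uniqueness is immediate from the density of $q(H^2(\C_+))$ in $\cH$.

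There is no real obstacle here: the statement is essentially the specialization of the abstract Osterwalder--Schrader-type construction of Proposition~\ref{prop:1.6} to the semigroup $(H^\infty(\C_+), \sharp)$, once Theorem~\ref{thm:3.5} has been used to recognize $H_h$ as a Hankel operator for the multiplication representation. The only points deserving explicit mention are (i) the identification of $\la\cdot,\cdot\ra_h$ with $\la\cdot, H_h\cdot\ra$ and (ii) checking that $h^\sharp = h$ yields $H_h \geq 0$ (not merely self-adjoint) from the hypothesis of $\theta_h$-positivity.
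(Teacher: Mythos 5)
Your proof is correct and follows essentially the same route as the paper: recognize $H_h = P_+\theta_h P_+^*$ as a positive Hankel operator for the multiplication representation of $(H^\infty(\C_+),\sharp)$ via Theorem~\ref{thm:3.5}, and then apply the abstract construction of Proposition~\ref{prop:1.6}. The extra details you supply (the identification of $\la\cdot,\cdot\ra_h$ with $\la\cdot,H_h\cdot\ra$ and the contractivity estimate) are consistent with, and slightly more explicit than, the paper's two-line argument.
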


\begin{prf} In view of 
Theorem~\ref{thm:3.5}(b), the operator 
$H_h := P_+ \theta_h P_+^*$ is a positive Hankel operator 
for the representation of the $*$-algebra 
$(H^\infty(\C_+),\sharp)$ on $H^2(\C_+)$. 
Hence the assertion follows from Proposition~\ref{prop:1.6}. 
\end{prf}

\begin{rem} In the context of Proposition~\ref{prop:4.11}, 
we have for $g \in H^2(\C_+)$ 
\begin{align}\label{eq:4.12}
\la q(g), \pi(f) q(g) \ra_{\hat\cE} 
&=  \la g, \theta_h(fg) \ra 
= \int_\R \oline{g(x)} g(-x) h(x) f(-x)\, dx \notag \\
&= \int_\R g^\sharp(x) g(x) h(-x) f(x)\, dx 
= \eta_{h^\vee g^\sharp g}(f), 
\end{align}
where we use the notation 
$h^\vee(x) := h(-x)$ for $x \in \R$.
These positive functionals are weakly continuous on $H^\infty(\C_+)$, 
so that the representation $\pi$ is weakly continuous.

If $g \in H^2(\C_+)$ is an outer function, then $H^\infty(\C_+)g$ is dense 
in $H^2(\C_+)$, so that $H^2(\C_+)$ is $\theta_h$-positive 
if and only if the functional 
$\eta_{h^\vee g^\sharp g}$ is positive. This in turn is equivalent to 
\[ h^\vee g^\sharp g  \in \psi_\mu + H^1(\C_+) \] 
for a finite positive Borel measure $\mu$ on $\R_+$ 
(cf.\ Example~\ref{ex:4.1}). In this 
case the representation $(\pi, \cH)$ is equivalent to the GNS 
representation associated to the positive functional $\eta_{h^\vee g^\sharp g}$ 
on $H^\infty(\C_+)$.
\end{rem}

\subsection{The unit group of $H^\infty$}
\mlabel{app:b.5} 

A non-negative function 
$w \: \T \to \R_+$ arises as 
$|f^*|$ for $f \in H^\infty(\bD)$ if and only if 
$\log w \in L^1(\T)$. This means that 
$w = e^h$ for $h \in L^1(\T,\R)$ bounded from above. 
If, in addition, $h \in L^{\infty}(\T,\R),$ 
then $e^{\pm h}$ are bounded. This leads to the following description 
of the unit group of $H^\infty(\bD)$. 
As all proper simply connected domains $\Omega \subeq \C$ 
are isomorphic, it also provides a description of the unit group 
$H^\infty(\Omega)^\times$ in general 
(see in particular Definition~\ref{def:4.3}). 

\begin{lem} \mlabel{lem:b.11} We have a surjective group homomorphism 
\[ \Out \:  (L^{\infty}(\T,\R),+) \times \T \to (H^\infty(\bD)^\times,\cdot), \quad 
\Out(w,\zeta) = \zeta e^{q_w}, \quad 
q_w(z) :=  \int_0^{2\pi} \frac{e^{it} + z}{e^{it} - z}
\, w(e^{it})\, dt.\] 
In particular, all invertible elements of $H^\infty(\bD)$ are outer. 
\end{lem}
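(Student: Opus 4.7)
The plan is to verify homomorphism first and then establish surjectivity via the standard outer-function construction; the fact that every invertible element is outer will follow at once.

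First I would check that $\Out$ is well-defined into $H^\infty(\bD)^\times$. For $w \in L^\infty(\T,\R)$ the Herglotz kernel $\frac{e^{it}+z}{e^{it}-z}$ is jointly continuous on $[0,2\pi]\times\bD$ and holomorphic in $z$, so $q_w \in \mathcal{O}(\bD)$ by differentiation under the integral. Writing $z = re^{i\theta}$ gives
\[
\Re(q_w)(re^{i\theta}) = \int_0^{2\pi} \frac{1-r^2}{|e^{it}-z|^2}\,w(e^{it})\,dt = 2\pi\cdot P[w](re^{i\theta}),
\]
i.e.\ $\Re(q_w)$ is $2\pi$ times the Poisson integral of $w$. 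Hence $|\Re(q_w)| \leq 2\pi\|w\|_\infty$ on $\bD$, so $|e^{q_w}| \in [e^{-2\pi\|w\|_\infty},\,e^{2\pi\|w\|_\infty}]$. This shows $e^{q_w} \in H^\infty(\bD)$, and applying the same estimate to $-w$ yields $e^{-q_w} = e^{q_{-w}} \in H^\infty(\bD)$, so $\Out(w,\zeta) \in H^\infty(\bD)^\times$.

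The homomorphism property is immediate: $w \mapsto q_w$ is linear by linearity of the integral, and $\exp$ converts addition to multiplication, giving $\Out(w_1+w_2,\zeta_1\zeta_2) = \Out(w_1,\zeta_1)\cdot\Out(w_2,\zeta_2)$.

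For surjectivity, given $f \in H^\infty(\bD)^\times$, both $|f^*|$ and $|(f^{-1})^*| = 1/|f^*|$ are essentially bounded, so $w := \tfrac{1}{2\pi}\log|f^*| \in L^\infty(\T,\R)$. Setting $g := e^{q_w}$, Fatou's theorem on nontangential limits of Poisson integrals of bounded functions gives $\Re(q_w)^* = 2\pi w$ a.e., hence
\[
|g^*| = e^{\Re(q_w)^*} = e^{2\pi w} = |f^*| \quad \text{a.e.\ on }\T.
\]
Then $h := f/g \in H^\infty(\bD)^\times$ with $|h^*| = 1$ a.e. Since $\log|h|$ is a bounded harmonic function on $\bD$ with boundary value $0$, it vanishes identically; equivalently, applying the maximum modulus principle to $h$ and to $h^{-1}$ gives $|h| \leq 1$ and $|h|\geq 1$ on $\bD$, forcing $h$ to be a unimodular constant $\zeta \in \T$. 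Thus $f = \zeta g = \Out(w,\zeta)$. The final assertion that every invertible element of $H^\infty(\bD)$ is outer is then built into the construction, since $\Out(w,\zeta)$ has the form of an outer function (with modulus data $k := e^{2\pi w}$ bounded above and away from zero).

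The only delicate ingredient is the boundary identification $\Re(q_w)^* = 2\pi w$ a.e., which rests on Fatou's classical theorem on Poisson integrals of $L^\infty$ data; after that, the maximum modulus step that an invertible inner function is constant is routine, and the rest of the argument is purely algebraic.
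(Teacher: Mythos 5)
Your proof is correct, but for the surjectivity direction you take a genuinely different route from the paper. The paper argues: $f$ invertible implies $fH^2(\bD)=H^2(\bD)$, hence $fH^\infty(\bD)$ is dense in $H^2(\bD)$, so $f$ is outer by the cyclicity characterization of outer functions; the canonical representation $f=\zeta e^{q_w}$ with $w\in L^1(\T,\R)$ then holds, and boundedness of $f$ and $f^{-1}$ forces $w\in L^\infty(\T,\R)$. You instead bypass the density characterization entirely: you build the candidate $g=e^{q_w}$ directly from the boundary modulus $w=\tfrac{1}{2\pi}\log|f^*|$, use Fatou's theorem to get $|g^*|=|f^*|$ a.e., and then show that the quotient $h=f/g$ is an invertible inner function, hence a unimodular constant. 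What the paper's route buys is brevity, at the cost of quoting the Beurling--Smirnov machinery (outer $\Leftrightarrow$ $fH^\infty$ dense in $H^2$, plus the $L^1$ representation of outer functions); what your route buys is self-containedness, resting only on the Poisson representation of $\Re(q_w)$, Fatou's theorem on nontangential limits, and the elementary fact that an invertible inner function is constant. Your verification that $\Out$ is well defined and a homomorphism is also more careful than the paper's one-line dismissal, and your normalization bookkeeping (the factor $2\pi$ between $\Re(q_w)$ and the Poisson integral of $w$) is consistent with the kernel as the paper writes it.
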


\begin{prf} It is clear that $Q$ is a group homomorphism 
whose range consists of outer functions which are invertible 
in $H^\infty(\bD)$. 
If, conversely,  $f \in H^\infty(\bD)$ is invertible, 
then the subspace 
$f H^2(\bD)$ coincides with $H^2(\bD)$. As $f \in H^2(\bD)$, it 
follows that $f H^\infty(\bD)$ is dense in $H^2(\bD)$, and hence that 
$f$ is outer, i.e., of the form $\zeta e^{q_w}$ for some $w \in L^1(\T,\R)$. 
As $f$ and $f^{-1}$ are bounded, $w$ is bounded as well, hence 
contained in $L^{\infty}(\T,\R)$. Therefore $\Out$ is surjective. 
\end{prf}

Finer results that imply the preceding lemma can be found in 
\cite[Part~A, \S 4.2]{Ni02}. For results on the operator-valued 
case, see \cite[Part~A, \S 3.3]{Ni02}.

\subsection{The representation on $H^2(\Omega)$} 
\mlabel{subsec:b.6}

If $\phi \: \Omega_1 \to \Omega_2$ is a biholomorphic map between 
simply connected proper domains in $\C$, then the map 
\begin{equation}
  \label{eq:unih2}
\Gamma_\phi \: H^2(\Omega_2) \to H^2(\Omega_1), \quad 
\Gamma_\phi(f) := \sqrt{\phi'} \cdot f \circ \phi 
\end{equation}
is unitary up to a positive factor, depending on the normalization 
of the scalar product. Here 
$\sqrt{\phi'}$ denotes one of the two holomorphic square 
roots of $\phi' \:\Omega_1 \to \C^\times$. Its existence follows from the 
simple connectedness of $\Omega_1$. 
Actually, this is how one can define 
$H^2(\Omega)$ for domains with a complicated boundary in a natural way. 

Clearly, $\Gamma_\phi$ intertwines the multiplication action 
of $H^\infty(\Omega_2)$ on $H^2(\Omega_2)$ with the action of 
$H^\infty(\Omega_1)$ on $H^2(\Omega_1)$ in the sense that 
\begin{equation}
  \label{eq:hinfh2inter}
 \Gamma_\phi(fg) = (f \circ \phi) \cdot \Gamma_\phi(g) \quad 
\mbox{ for } \quad f \in H^\infty(\Omega_2), g \in H^2(\Omega_2).
\end{equation}

To see how the Szeg\"o kernels on $\Omega_1$ and $\Omega_2$ are related, 
we observe that, for $f \in H^2(\Omega_2)$ and $z \in \Omega_1$ the relation 
\[  \sqrt{\phi'(z)} \la Q^{\Omega_2}_{\phi(z)}, f \ra
=  \sqrt{\phi'(z)} f\big(\phi(z)) 
=  \Gamma_\phi(f)(z) 
=\la Q^{\Omega_1}_z, \Gamma_\phi(f) \ra 
=\la \Gamma_\phi^{-1} Q^{\Omega_1}_z, f \ra \] 
implies that 
\[ Q^{\Omega_1}_z 
= \oline{\sqrt{\phi'(z)}} \Gamma_\phi(Q^{\Omega_2}_{\phi(z)}),\] 
which leads to the following transformation formula for the kernels 
\[ Q^{\Omega_1}(z,w) = \sqrt{\phi'(z)}
Q^{\Omega_2}(\phi(z), \phi(w)) \oline{\sqrt{\phi'(w)}}.\] 

\begin{ex}
For $\Omega_1 =\bD$ and $\Omega_2 = \C_+$ we have 
\[  Q^{\Omega_1}(z,w) = \frac{1}{2\pi}  \frac{1}{1 - z \oline w}
 \quad \mbox{ and } \quad 
Q^{\Omega_2}(z,w) = \frac{1}{2\pi} \frac{i}{z - \oline w}.\] 
The Cayley transform 
$ \omega \: \bD \to \C_+,\omega(z) := i \frac{1 + z}{1-z}$ with 
$\omega'(z) =  \frac{2i}{(1-z)^2}$ 
satisfies 
\[ \sqrt{\omega'(z)}Q^{\C_+}(\omega(z), \omega(w)) 
\oline{\sqrt{\omega'(w}}
= \frac{\sqrt{2}}{(1-z)}
\frac{1}{2\pi}\frac{1}{\frac{1 + z}{1-z} + \frac{1 + \oline w}{1- \oline w}}
\frac{\sqrt{2}}{(1-\oline w)}
= \frac{1}{2\pi}\frac{1}{1- z \oline w} 
= Q^\bD(z,w).\] 
\end{ex}

\subsection{The Carleson measure} 
\mlabel{subsec:b.7}

The Carleson measure $\mu_H$ of a positive Hankel operator 
$H$ on $H^2(\Omega)$ with respect to the multiplication 
representation of the Banach $*$-algebra 
$(H^\infty(\Omega), \sharp)$ lives 
on the subset $\Omega^\sigma$ of $\sigma$-fixed points 
(cf.\ Proposition~\ref{prop:4.6}). 
The abstract correspondence is 
\begin{equation}
  \label{eq:hankmeas}
\la f, H g \ra_{H^2(\Omega)} 
= \int_{\Omega^\sigma} \oline{f(\lambda)} g(\lambda)\, d\mu(\lambda)
\quad \mbox{ for }\quad f,g \in H^2(\Omega).
\end{equation}
However, it is more convenient to parametrize this subset by real 
intervals. Formula \eqref{eq:hankmeas} shows in particular 
that Hankel operators on $H^2(\Omega)$ are superpositions of 
rank-one Hankel operators $H_\lambda$ corresponding to point 
measures $\delta_\lambda$, 
$\lambda \in \Omega^\sigma$. 
If $Q$ is the reproducing kernel of $H^2(\Omega)$, then the relation 
\[ \la f, H_\lambda g \ra_{H^2(\Omega)} 
=  \oline{f(\lambda)} g(\lambda) 
= \la f, Q_\lambda \ra \la Q_\lambda, g \ra \] 
shows that 
\[ H_\lambda  = |Q_\lambda \ra \la Q_\lambda| \] 
in Dirac's bra-ket notation. This means that 
\begin{equation}
  \label{eq:hlambda}
   H_\lambda(f)  = f(\lambda) Q_\lambda 
\quad \mbox{ with }\quad 
\| H_\lambda\| = \|Q_\lambda\|^2 = Q(\lambda,\lambda).
\end{equation}
Formula \eqref{eq:hankmeas} can now be written as a weak integral 
\[  H = \int_{\Omega^\sigma} H_\lambda\, d\mu(\lambda),\]
which exists pointwise in the space of sesquilinear forms 
on $H^2(\Omega)$.\begin{footnote}
{See \cite[\S 6.3.1]{Ni02} for the case of the disc and for 
more general Carleson measures.}  
\end{footnote}

The symbol kernel of the Hankel operator $H$ with respect to the Szeg\"o kernel is 
the kernel 
\[ Q_H(z,w) 
=  \la Q_z, H Q_w \ra 
=  \int_{\Omega^\sigma} \oline{Q_z(\lambda)} Q_w(\lambda)\, d\mu(\lambda) 
=  \int_{\Omega^\sigma} Q(z,\lambda) Q(\lambda,w)\, d\mu(\lambda)\] 
and 
\[ Q_{H_\lambda}(z,w) 
=   Q(z,\lambda) Q(\lambda,w).\] 

\begin{rem} The norm of $H$ can be determined in terms of the kernel 
$Q_H$ by 
\[ \|H\| = \inf \{ c > 0 \: c Q - Q_H\ \mbox{ positive def.} \}\]
(\cite{Ne99}). In some situations this number can be determined by 
restricting to finite subsets. 
\end{rem}

\begin{rem} If $H$ is a positive Hankel operator on $H^2(\Omega)$ 
and $\mu_H$ the corresponding Carleson measure on $\Omega^\sigma$, 
then we obtain for every $g \in H^2(\Omega)$ a finite positive 
measure $d\mu_g = |g(\lambda)|^2\, d\mu_H(\lambda)$ representing 
a positive weakly continuous functional on the 
Banach $*$-algebra $(H^\infty(\Omega), \sharp)$: 
\[ \phi_{\mu_g}(f) := \la g,  H fg \ra 
= \int_{\Omega^\sigma} f(\lambda) |g(\lambda)|^2\, d\mu_H(\lambda) 
= \int_{\Omega^\sigma} f(\lambda)\, d\, \mu_g(\lambda)\] 
(cf.\ Proposition~\ref{prop:4.6}).
\end{rem}

\section{Cauchy and Poisson kernels} 
\mlabel{app:k} 

For a proper simply connected domain $\Omega \subeq \C$, 
the Hardy space $H^2(\Omega)$ 
(cf.\ Subsection~\ref{subsec:b.6}) is a reproducing kernel 
Hilbert space, i.e., the point evaluations 
\[  \ev_z \: H^2(\Omega) \to \C, \quad f \mapsto f(z) \]
are continuous linear functionals, hence can be written as 
\[ f(z) = \la Q_z, f \ra \quad \mbox{ for some } \quad Q_z \in H^2(\Omega).\] 
The kernel 
\[ Q \: \Omega \times \Omega \to \C, \quad Q(z,w) := Q_w(z) = \la Q_z, Q_w \ra \] 
is called the {\it Szeg\"o kernel of $\Omega$}. 

If $\Omega$ has smooth boundary, so that we have an isometric 
boundary value map 
\[ H^2(\Omega) \to L^2(\partial \Omega), \quad 
f \mapsto f^*,\] 
then we obtain  the {\it Poisson kernel} of $\Omega$ by 
the {\it Hua formula} (cf.\ \cite[pp.~8,98]{Hu63}, \cite{Ko65}) 
\begin{equation}
  \label{eq:poiss-omega}
P \: \Omega \times \partial \Omega  \to \R, \quad 
 P(z,x) = \frac{|Q(z,x)|^2}{Q(z,z)} \quad \mbox{ for } \quad 
z \in \Omega, x \in \partial\Omega,
\end{equation}
i.e., 
\[ P(z,\cdot) = \frac{|Q_z^*|^2}{Q(z,z)} \in L^1(\partial \Omega).\]  

\begin{ex}
  \mlabel{app:k.1}
The Szeg\"o kernel of the disc is 
\[ Q(z,w) = \frac{1}{2\pi} \frac{1}{1 - z \oline w}.\] 
For $f \in H^2(\bD)$, we have 
\[  f(z) = \la Q_z, f \ra 
=  \int_0^{2\pi} \oline{Q_z}(e^{i\theta}) f^*(e^{i\theta})\, d\theta 
= \frac{1}{2\pi} \int_{\partial \bD} \frac{f^*(\zeta)}{1 - \oline\zeta z} \, 
\frac{d\zeta}{i\zeta} 
= \frac{1}{2\pi i} \int_{\partial \bD} \frac{f^*(\zeta)}{\zeta - z} \, d\zeta.\]
We thus obtain from \eqref{eq:poiss-omega} the Poisson kernel 
\[ P(re^{i\theta}, e^{it}) 
= \frac{1}{2\pi} \frac{1-r^2}{|1 - re^{i(\theta-t)}|^2} 
= \frac{1}{2\pi}\frac{1-r^2}{1 - 2r \cos(\theta-t) + r^2}
\quad \mbox{ for } \quad z = r e^{i\theta}\in \bD, t \in [0,2\pi].\] 
\end{ex}

\begin{ex}
\mlabel{app:k.2} 
The Szeg\"o kernel on the upper half-plane is  given by 
\begin{equation}
  \label{eq:cauchyker}
 Q(z,w) = \frac{1}{2\pi} \frac{i}{z - \oline w} 
\quad \mbox{ for } \quad z,w \in \C_+.
\end{equation}
%As the functions $Q_w$, $w \in \C_+$, are total in $H^2(\C_+)$, the verification of this% formula boils down to show that 
%$Q_w(z) = \la Q_z, Q_w \ra$ for $z,w \in \C_+$, which 
This is an easy consequence of the Residue Theorem. We have 
\[ f(z) = \la Q_z, f \ra = \frac{1}{2\pi i} \int_\R \frac{f^*(x)}{x -z}\, dx 
\quad \mbox{ for } \quad 
f \in H^2(\C_+), z \in \C_+.\] 
For the Poisson kernel we obtain with 
Hua's formula \eqref{eq:poiss-omega}
\begin{equation}
  \label{eq:poissker}
 P(z,x) = P_z(x) = \frac{1}{\pi} \frac{\Im z}{|z-x|^2}.
\end{equation}
\end{ex}

\vspace{1cm}

\nin Institut f\"{u}r Theoretische Physik, University of Leipzig, Br\"{u}derstrasse 16, 04103 Leipzig, Germany;
adamo@axp.mat.uniroma2.it; mariastella.adamo@community.unipa.it\\

\nin Department of Mathematics, Friedrich-Alexander-University of Erlangen-N\"urnberg, Cauerstrasse 11, 91058 Erlangen, Germany; neeb@math.fau.de\\

\nin Department of Mathematics, Friedrich-Alexander-University of Erlangen-N\"urnberg, Cauerstrasse 11, 91058 Erlangen, Germany; jonas.schober@fau.de

\end{document}